\newtheorem{theorem}{Theorem}[section]
\newtheorem{prop}[theorem]{Proposition}
\newtheorem{lema}[theorem]{Lemma}
\newtheorem{corollary}{Corollary}[theorem]
\theoremstyle{definition}
\newtheorem{defi}{Definition}
\numberwithin{equation}{section}
\newcommand{\ip}[2]{\left\langle #1, #2 \right\rangle}
\newcommand{\divides}{\mid}
\newcommand{\real}{\operatorname{Re}}
\newcommand{\iu}{\mathbf{i}}
\newcommand{\spec}{\operatorname{Spec}}
\newcommand{\dist}{\operatorname{dist}}
\newcommand{\lcm}{\operatorname{lcm}}
\def \j {{\mathbf{j}}}
\def \Nl {{\mathbb N}}
\def \Rl {{\mathbb R}}
\def \Ql {{\mathbb Q}}
\def \Cl {{\mathbb C}}
\def \Ca {{\mathbb{C}[\mathcal{A}]}}
\def \Ac {{\mathcal{A}}}
\def \ld {{\lambda}}
\def \eu {{\textbf{e}_u}}
\def \ev {{\textbf{e}_v}}
\def \ew {{\textbf{e}_w}}
\title{Perfect state transfer in Grover walks on association schemes and distance-regular graphs}
\author{ Koushik Bhakta and Bikash Bhattacharjya\\
	Department of Mathematics\\
	Indian Institute of Technology Guwahati, India\\
	b.koushik@iitg.ac.in, b.bikash@iitg.ac.in }
\date{}
\begin{document}
	\maketitle
	
	\vspace{-0.3in}
	
	\begin{center}{\textbf{Abstract}}\end{center}
	\noindent This paper investigates perfect state transfer in Grover walks, a model of discrete-time quantum walks. We establish a necessary and sufficient condition for the occurrence of perfect state transfer on graphs belonging to an association scheme. Our focus includes specific association schemes, namely the Hamming and Johnson schemes. We characterize all graphs on the classes of  Hamming and Johnson schemes that exhibit perfect state transfer. Furthermore, we study perfect state transfer on distance-regular graphs. We provide complete characterizations for exhibiting perfect state transfer on distance-regular graphs of diameter $2$ and diameter $3$, as well as integral distance-regular graphs.

	\vspace*{0.3cm}
	\noindent 
	\textbf{Keywords.} Grover walk, perfect state transfer, association scheme, Hamming scheme, Johnson scheme, distance-regular graph\\
	\textbf{Mathematics Subject Classifications:} 05C50, 05E30, 81Q99
	
	\section{Introduction}
	
	Quantum walks are fundamental mechanisms in quantum computing that are widely used for developing quantum algorithms~\cite{ambain} and studying quantum transport phenomena~\cite{kendon}. Quantum walks serve as the quantum analogue of classical random walks. They are broadly classified into two primary forms: continuous-time quantum walks~\cite{state} and discrete-time quantum walks~\cite{zhan3}. A remarkable feature of quantum walks is their ability to achieve perfect state transfer, where a quantum state is transferred from one position to another with probability $1$. Perfect state transfer has been extensively studied in the context of continuous-time quantum walks (see \cite{johnson, association, coutinho, state} and references therein). In recent years, there is an increasing trend in research on discrete-time quantum walks due to their unique applications in graph isomorphism testing~\cite{graphiso}, quantum search algorithms~\cite{search}, and more. There are various models in discrete-time quantum walks (see \cite{godsildct}). One of the most well-studied discrete-time quantum walks on graphs is the Grover walk (see \cite{spec,zhan2}). The Grover walk is a coined quantum walk based on the Grover diffusion operator, which has attracted significant attention due to its intriguing properties, including localization~\cite{localization}, periodicity~\cite{hig1}, and state transfer capabilities~\cite{kubota2}. This paper focuses on the study of perfect state transfer in Grover walks. The Grover walk, also called arc-reversal Grover walk~\cite{zhan1}, is recognized as a special case of bipartite walk~\cite{chen}. In Section~2.1, we provide a detailed definition of Grover walk on graphs. 

	The last few decades have seen growing interest in studying quantum state transfer in Grover walks. Zhan~\cite{zhan1} constructed an infinite family of graphs that exhibit perfect state transfer, which provides a significant step toward understanding this phenomenon. Further contributions include the work of Kubota and Segawa~\cite{kubota1}, who provided a necessary condition for the occurrence of perfect state transfer on graphs and characterized its occurrence on complete multipartite graphs. Kubota and Yoshino~\cite{kubota2} investigated perfect state transfer on circulant graphs with valency up to four. In \cite{bhakta1, bhakta2, bhakta3}, we studied perfect state transfer on some special classes of Cayley graphs. The study on periodicity of Grover walks, which plays a crucial role in perfect state transfer, has been explored in~\cite{ito, mixed3, bipartite,bethetrees, regular, oddperiodic}. Previous research has explored perfect state transfer in specific families of graphs, such as circulant graphs, complete multipartite graphs, and others. However, a systematic study of perfect state transfer on graphs arising from association schemes has been lacking. Yoshie~\cite{distance} studied periodicity on distance-regular graphs. In this paper, we study perfect state transfer on distance-regular graphs, more generally, for graphs that belong to association schemes. Perfect state transfer in the context of continuous-time quantum walks on graphs belonging to association schemes has been studied in~\cite{johnson, association}.

	We now briefly discuss our main results. We refer the reader to the later sections for terminologies and further details. In Theorem~\ref{association2}, we provide a necessary and sufficient condition for the occurrence of perfect state transfer on graphs in an association scheme. In particular, Theorems~\ref{pst_hamming} and \ref{pst_johnson} provide simple conditions for the occurrence of perfect state transfer on graphs in the Hamming and Johnson schemes, respectively. We also fully characterize  perfect state transfer on the classes of Hamming and Johnson schemes in Theorems~\ref{class_hamming} and \ref{class_johnson}. Using Theorem~\ref{association2}, we establish a necessary and sufficient condition for perfect state transfer on distance-regular graphs in Theorem~\ref{pst_distance}. Furthermore, we completely characterize perfect state transfer on distance-regular graphs of diameter 2 and 3 in Theorems~\ref{pst_distance2} and \ref{pst_distance3}, respectively. Finally,  Theorem~\ref{pst_int_distance} provides a complete classification of perfect state transfer on integral distance-regular graphs. Our work provides a connection between algebraic combinatorics and quantum information theory by focusing on association schemes, which offer a rich algebraic structure to study symmetric graphs.

	The paper is organized as follows. Section $2$ provides the necessary preliminaries, including the definition of Grover walks and the concepts of periodicity and perfect state transfer. This section also introduces an overview of association schemes, distance-regular graphs, and some foundational results used in subsequent sections. In Section $3$, we investigate perfect state transfer on association schemes, with a particular focus on the Hamming and Johnson schemes. Section $4$ explores perfect state transfer on distance-regular graphs. In particular, we examine cycles, complete graphs, and distance-regular graphs with diameters $2$ and $3$. We also investigate perfect state transfer on integral distance-regular graphs.
	
	\section{Preliminaries}
	Let $G:=(V(G),E(G))$ be a finite simple graph with vertex set $V(G)$ and edge set $E(G)$. The elements of $E(G)$ are denoted by unordered pairs $uv$, where $u,v\in V(G)$, $u\neq v$, and $uv=vu$. Let $u$ and $v$ be two vertices of a connected graph $G$. The \emph{distance} $\dist_G(u,v)$  is defined as  the length of the shortest path connecting two vertices $u$ and $v$ in $G$. The \emph{diameter} of $G$ is defined as 
	$$\operatorname{diam}(G) = \max_{u, v \in V(G)} \dist_G(u, v).$$
	Two vertices $u$ and $v$ of $G$ are called  \emph{antipodal} if $\dist_G(u,v)=\operatorname{diam}(G)$. The \emph{adjacency matrix} of a graph $G$, denoted $A:=A(G)\in \Cl^{V(G)\times V(G)}$, is defined by	   
	$$A_{uv} = \left\{ \begin{array}{rl}
		1 &\mbox{ if }
		uv\in E(G) \\ 
		0 &\textnormal{ otherwise.}
	\end{array}\right.$$ 
	A graph is called \emph{integral} if the eigenvalues of its adjacency matrix are integers. Let $I$  denote the identity matrix and $J$ the all-ones matrix. The size of the matrices is clear from their context. We denote $\j$ as the all-ones vector.

	\subsection{Grover walks}

	Let $uv$ be an edge of a finite simple  graph $G$. Then the ordered pairs $(u,v)$ and $(v,u)$ are known as the \emph{arcs} of $G$. The set of  symmetric arcs of $G$ is defined as $\mathcal{A}(G)=\{(u, v), (v, u):uv\in E(G) \}$. For an arc $(u,v)$, the vertices $u$ and $v$ are called the \emph{origin} and \emph{terminus}, respectively. Let $a$ be an arc of $G$ such that $a=(u,v)$. We write $o(a)$ and $t(a)$ to denote the origin and terminus of $a$, respectively, that is, $o(a)=u$ and $t(a)=v$.  The inverse arc of $a$, denoted $a^{-1}$, is the arc $(v,u)$.
	
	We now introduce several matrices for the definition of Grover walks on a graph $G$. The \emph{shift matrix} $S:=S(G)\in \mathbb{C}^{\mathcal{A}(G) \times \mathcal{A}(G)}$ of $G$ is defined by $S_{ab}=\delta_{a,b^{-1}}$,  where $\delta_{a,b}$ is the Kronecker delta function. The \emph{boundary matrix} $N:=N(G)\in \mathbb{C}^{V(G)\times \mathcal{A}(G)}$ of $G$ is defined by $N_{ua}=\frac{1}{\sqrt{\deg u}}\delta _{u, t(a)}$, where $\deg u$ is the degree of the vertex $u$. The \emph{coin matrix} $K:=K(G)\in\mathbb{C}^{\mathcal{A}(G) \times \mathcal{A}(G)}$ of $G$ is defined by $K=2N^*N-I$. Using these matrices, the \emph{time evolution matrix} $U:=U(G)\in \mathbb{C}^{\mathcal{A}(G)\times \mathcal{A}(G)}$ of $G$ is defined by $$U=SK.$$ 
	A discrete-time quantum walk on a graph $G$ is governed by a unitary matrix acting on complex-valued functions over the set of  symmetric arcs of $G$. The discrete-time quantum walk defined by the unitary matrix $U$ is known as the \emph{Grover walk}.

	The eigenvalues of $U$ can be derived from a smaller matrix known as the discriminant matrix. The \emph{discriminant} $P:=P(G)\in \mathbb{C}^{V(G)\times V(G)}$ of $G$ is defined by $$P=NSN^*.$$
	See~\cite{kubota2} for more details about the matrices $S$, $N$, $K$, $U$ and $P$. 	For a non-negative integer $k$, the notation $\{a\}^k$  denotes the multi-set $\{a,\ldots,a\}$, where the element $a$ repeats $k$-times. We also denote $\sqrt{-1}$ by $\iu$. The following result follows from \cite[Proposition~1]{spec}
	\begin{theorem}[{\cite[Theorem~4.3]{mixed1}}]\label{evu_grover} 
		Let $\mu_1, \hdots,\mu_n$ be the eigenvalues of the discriminant of a graph $G$. Then the multi-set of eigenvalues of the time evolution matrix is $$\left\{e^{\pm \iu \arccos(\mu_j)}:1\leq j\leq n\right\}\cup \{1\}^{b_1} \cup \{ -1\}^{b_1-1+1_B},$$ where $b_1=|E(G)|-|V(G)|+1$, and $1_B=1$ or $0$ according as $G$ is bipartite or not.	
	\end{theorem}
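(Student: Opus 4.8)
The plan is to derive the spectrum of the time evolution matrix $U = SK$ by relating it to the spectrum of the discriminant $P = NSN^*$ through the intertwining structure connecting the boundary matrix $N$, the shift matrix $S$, and the coin matrix $K = 2N^*N - I$. The essential algebraic observation I would establish first is that $N$ maps eigenvectors of $U$ to eigenvectors of $P$: concretely, I would verify the intertwining relations $NU = PN$ and a companion relation obtained by applying $N$ to $U$. Writing $U = S(2N^*N - I) = 2SN^*N - S$, one computes $NU = 2(NSN^*)N - NS = 2PN - NS$, so the algebra forces us to understand how $N$, $S$, and $N^*$ interact. The key structural facts are that $N N^* = I$ on $\mathbb{C}^{V(G)}$ (since each row of $N$ is a unit vector supported on the arcs into a fixed vertex, and these supports are disjoint across vertices), and that $S$ is a self-adjoint involution ($S^2 = I$, $S^* = S$).

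Next I would analyze the action of $U$ on the orthogonal decomposition of $\mathbb{C}^{\mathcal{A}(G)}$ induced by the column space of $N^*$ and its orthogonal complement. Set $\mathcal{L} = \operatorname{col}(N^*) + \operatorname{col}(SN^*)$, the subspace generated by the images of $N^*$ and $SN^*$; this space is $U$-invariant and $S$-invariant, and on it the walk is governed by the two-dimensional-per-eigenvalue blocks coming from $P$. For an eigenvector $x$ of $P$ with eigenvalue $\mu_j = \cos\theta_j$, the vectors $N^*x$ and $SN^*x$ span a subspace on which $U$ acts as a rotation-like $2\times 2$ matrix whose eigenvalues are $e^{\pm \iu \theta_j} = e^{\pm \iu \arccos(\mu_j)}$; this recovers the first part of the multiset. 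The degenerate cases $\mu_j = \pm 1$ (where $N^*x$ and $SN^*x$ become linearly dependent) must be handled separately and feed into the $\pm 1$ eigenvalue counts.

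The remaining task is to identify the eigenvalues of $U$ on the orthogonal complement $\mathcal{L}^\perp$, which is exactly $\ker(N) \cap \ker(NS)$, i.e. the arc-functions orthogonal to all the $N$-images. On this complement the coin acts as $K = -I$ (since $N^*N$ vanishes there), so $U = S(-I) = -S$ restricted to $\mathcal{L}^\perp$, and the eigenvalues are determined purely by the involution $S$. The eigenvalues of $-S$ are $\pm 1$, and counting their multiplicities is a matter of computing $\dim \mathcal{L}^\perp$ together with the trace or fixed-space dimension of $S$ on that subspace. This is where the topological/combinatorial quantities enter: the dimension of $\mathcal{L}^\perp$ is controlled by the cycle rank $b_1 = |E(G)| - |V(G)| + 1$, and the asymmetry between the $+1$ and $-1$ counts, captured by the bipartiteness indicator $1_B$, arises because the $-1$-eigenspace of $S$ picks up an extra dimension precisely when $G$ admits the all-ones/alternating sign structure that a bipartition provides.

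I expect the main obstacle to be the careful bookkeeping of multiplicities in the degenerate spectrum, namely proving that the flat bands at $1$ and $-1$ have exactly the stated multiplicities $b_1$ and $b_1 - 1 + 1_B$. The clean part is the rotation blocks giving $e^{\pm \iu \arccos(\mu_j)}$; the delicate part is separating the contributions to $\pm 1$ coming from (a) the eigenvalues $\mu_j = \pm 1$ of $P$, (b) the kernel/involution analysis on $\mathcal{L}^\perp$, and (c) ensuring no double counting between these sources, with the bipartite correction term tracking whether the constant eigenvector of $P$ at $\mu = -1$ genuinely exists. Since the statement is quoted as \cite[Theorem~4.3]{mixed1}, I would either cite that directly or reconstruct the multiplicity computation via the rank–nullity argument on $N$ and $S$ sketched above, which is the standard route for establishing the spectral correspondence between a Grover walk and its discriminant.
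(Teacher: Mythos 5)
First, note that the paper does not prove this statement at all: it is quoted verbatim from \cite[Theorem~4.3]{mixed1} (equivalently \cite[Proposition~1]{spec}), so there is no in-paper proof to compare against. Your outline is the standard spectral-mapping argument from those references, and the overall architecture (the invariant subspace $\mathcal{L}=\operatorname{col}(N^*)+\operatorname{col}(SN^*)$ carrying the rotation blocks, and $U=-S$ on $\mathcal{L}^\perp=\ker N\cap\ker(NS)$ because $K=-I$ there) is correct. However, your opening algebraic claim is wrong: $NU=PN$ does not hold. Your own computation gives $NU=2PN-NS$, and $2PN-NS=PN$ would require $S(\ker N)\subseteq\ker N$, which fails for any graph beyond trivial cases since $S$ exchanges the terminus-fibres defining $\ker N$ with origin-fibres. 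The relations that actually produce the $2\times 2$ blocks are $UN^*=SN^*$ (using $NN^*=I$) and $U(SN^*)=2SN^*P-N^*$; for $Px=\mu x$ these give, in the basis $\{N^*x,\,SN^*x\}$, the matrix $\left(\begin{smallmatrix}0&-1\\1&2\mu\end{smallmatrix}\right)$ with characteristic polynomial $\lambda^2-2\mu\lambda+1$ and roots $e^{\pm\iu\arccos\mu}$.

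The more serious issue is that the actual content of the theorem --- the multiplicities $b_1$ and $b_1-1+1_B$ of the flat bands, and the correct treatment of the degenerate eigenvalues $\mu=\pm1$ where $N^*x$ and $SN^*x$ become dependent --- is exactly the part you defer (``I expect the main obstacle to be the careful bookkeeping of multiplicities''). That bookkeeping is not a routine afterthought: one must show $\dim\mathcal{L}=2|V(G)|-1-1_B$ for a connected graph (the defect coming precisely from the $\mu=\pm1$ eigenvectors of $P$), hence $\dim\mathcal{L}^\perp=2b_1-1+1_B$, and then split $\mathcal{L}^\perp$ into the $(\pm1)$-eigenspaces of $-S$ using the symmetric/antisymmetric arc functions supported on the cycle space, which is where $b_1$ and the bipartite correction enter. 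As written, the proposal is a correct plan with one false intermediate identity and the decisive multiplicity computation missing; since the statement is a cited external result, the honest options are to carry out that rank--nullity count in full or simply to cite \cite{mixed1} as the paper does.
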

	Next, we define the periodicity and perfect state transfer in Grover walks.	
	\begin{defi}  
		The Grover walk on a graph $G$ is \emph{periodic} if there exists a positive integer \( \tau \) such that the time evolution matrix $U$ of $G$ satisfies $U^\tau = I$. 
	\end{defi}  
	For simplicity, we say $G$ is periodic to mean that the Grover walk on $G$ is periodic. The smallest positive integer $\tau$ such that $U^\tau =I$ is called the \emph{period} of $G$, and $G$ is called \emph{$\tau$-periodic}.

	Since $U$  is a unitary matrix, it is diagonalizable. Consequently, the periodicity of a graph can be determined from the eigenvalues of its time evolution matrix. For a matrix $M$ associated with a graph $G$, we denote by $\spec_M(G)$ the set of all distinct eigenvalues of the matrix $M$. 
	\begin{lema}[{\cite[Lemma~5.3]{mixed2}}]\label{period}
		A graph $G$ is $\tau$-periodic if and only if $ \eta^\tau  =1$ for every $\eta \in \spec_U(G)$, and for some $\eta\in\spec_U(G)$, $\eta^j\neq 1$ for each $j\in\{1,\hdots,\tau-1\}$.
	\end{lema}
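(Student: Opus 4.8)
The plan is to exploit the single structural fact that the time evolution matrix $U$ is unitary, and therefore diagonalizable, so that powers of $U$ are governed entirely by powers of its eigenvalues. Concretely, I would fix a basis of $\Cl^{\mathcal{A}(G)}$ consisting of eigenvectors of $U$. If $U^\tau = I$, then applying $U^\tau$ to a nonzero eigenvector $v$ with eigenvalue $\eta$ gives $\eta^\tau v = U^\tau v = v$, forcing $\eta^\tau = 1$; conversely, if $\eta^\tau = 1$ for every $\eta \in \spec_U(G)$, then $U^\tau$ fixes each basis eigenvector and hence $U^\tau = I$. This establishes the basic equivalence
\[
U^\tau = I \iff \eta^\tau = 1 \text{ for every } \eta \in \spec_U(G),
\]
which does all the real work.

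With this in hand I would split the lemma into its two implications, recalling that ``$\tau$-periodic'' means $\tau$ is the \emph{smallest} positive integer with $U^\tau = I$. For the backward direction, assume both displayed conditions. The first gives $\eta^\tau = 1$ for all $\eta$, hence $U^\tau = I$ by the equivalence above. If $\tau$ were not minimal, there would be some $j \in \{1, \ldots, \tau-1\}$ with $U^j = I$, and then $\eta^j = 1$ would hold for \emph{every} eigenvalue $\eta$; but the second condition produces an eigenvalue with $\eta^j \neq 1$ for this $j$, a contradiction. Hence no smaller period exists and $G$ is $\tau$-periodic.

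For the forward direction, assume $G$ is $\tau$-periodic. Then $U^\tau = I$, so the equivalence yields $\eta^\tau = 1$ for every $\eta$, which is the first condition. Minimality of $\tau$ means $U^j \neq I$ for every $j \in \{1, \ldots, \tau-1\}$, and by the equivalence each such $j$ admits some eigenvalue whose $j$-th power is not $1$, which is precisely the second condition. The only point demanding care — and the main obstacle, such as it is — is the translation of minimality into that second clause: one must keep the order of the quantifiers over $j \in \{1,\ldots,\tau-1\}$ and over $\eta \in \spec_U(G)$ straight, reading it as the assertion that for each relevant $j$ there exists an eigenvalue off the $j$-th roots of unity. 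Once this is pinned down, everything reduces to the elementary spectral bookkeeping above.
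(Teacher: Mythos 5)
Your proof is correct, and in fact the paper offers no proof of this lemma at all: it is quoted verbatim from Kubota--Sekido--Yata \cite[Lemma~5.3]{mixed2}, so there is nothing to compare approaches against. The argument you give --- $U$ unitary hence diagonalizable, so $U^j=I$ if and only if $\eta^j=1$ for every $\eta\in\spec_U(G)$, followed by the translation of minimality of $\tau$ --- is the standard and essentially only proof. Your explicit resolution of the quantifier ambiguity is the right call and worth flagging: the second clause must be read as ``for each $j\in\{1,\dots,\tau-1\}$ there exists $\eta\in\spec_U(G)$ with $\eta^j\neq 1$,'' which is exactly the negation of $U^j=I$; this reading is also the only one consistent with the paper's Lemma~\ref{periodic}, where the period is the \emph{least common multiple} of the orders of the eigenvalues. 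Under the other parsing ($\exists\eta\,\forall j$) the lemma would be false, e.g.\ if the eigenvalue orders are $2$ and $3$ the graph is $6$-periodic although no single eigenvalue has order $6$.
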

	It follows from Lemma~\ref{period} that the period of a graph can be computed directly from the eigenvalues of $U$.
	\begin{lema}[{\cite[Corollary~2.2.1]{bhakta2}}]\label{periodic}
		Let $\eta_1, \hdots , \eta_t$ be the distinct eigenvalues of the time evolution matrix of a periodic graph $G$. Let $k_1, \hdots, k_t$ be the least positive integers such that $\eta_1 ^{k_1}=1, \hdots,\eta_t^{k_t}=1$. Then the period of $G$ is $\lcm(k_1, \hdots, k_t)$.
	\end{lema}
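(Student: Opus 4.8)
The plan is to reduce the matrix identity $U^\tau = I$ to a statement about the multiplicative orders of the eigenvalues, and then invoke the defining property of the least common multiple. Since $U$ is unitary, it is diagonalizable, so for any positive integer $\tau$ one has $U^\tau = I$ if and only if $\eta_j^\tau = 1$ for every distinct eigenvalue $\eta_j$. Because $G$ is periodic, there is a positive integer $\tau_0$ with $U^{\tau_0}=I$, so each $\eta_j$ satisfies $\eta_j^{\tau_0}=1$ and is therefore a root of unity; in particular the order $k_j$ (the least positive integer with $\eta_j^{k_j}=1$) is well-defined, and $\eta_j^\tau = 1$ holds precisely when $k_j \mid \tau$.

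First I would set $m = \lcm(k_1, \hdots, k_t)$ and let $p$ denote the period of $G$. To show $m \mid p$: since $U^p = I$ and $U$ is diagonalizable, $\eta_j^p = 1$ for every $j$, hence $k_j \mid p$ for all $j$; as $m$ is the least common multiple of the $k_j$, this forces $m \mid p$.

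Next I would establish $p \mid m$. Since $k_j \mid m$ for each $j$, we get $\eta_j^m = 1$ for every $j$, and diagonalizability of $U$ then yields $U^m = I$. Thus $m$ is a positive integer for which $U^m = I$; because $p$ is the smallest such integer (by definition of the period, cf.\ Lemma~\ref{period}), we conclude $p \mid m$. Combining $m \mid p$ and $p \mid m$ with $m, p > 0$ gives $p = m = \lcm(k_1, \hdots, k_t)$.

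I expect no serious obstacle here: the only structural input is the diagonalizability of the unitary matrix $U$, which converts the matrix-power identity into the scalar conditions $\eta_j^\tau = 1$; the remainder is the elementary characterization of the $\tau$-th roots of unity by divisibility of their order together with the universal property of the least common multiple. The one point to state carefully is that one must pass to the \emph{distinct} eigenvalues so that each order $k_j$ is attached to a genuine root of unity, after which the double-divisibility argument closes the proof.
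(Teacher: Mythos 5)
The paper does not prove this lemma at all; it is imported verbatim as Corollary~2.2.1 of \cite{bhakta2}, so there is no in-paper argument to compare against. Your proof is the standard one and is correct: diagonalizability of the unitary $U$ converts $U^\tau=I$ into the scalar conditions $\eta_j^\tau=1$, and the double-divisibility argument with $m=\lcm(k_1,\hdots,k_t)$ closes it. The only step stated slightly too quickly is ``$p$ is the smallest integer with $U^p=I$, hence $p\mid m$'': justify it by division with remainder ($m=qp+r$ with $0\le r<p$ gives $U^r=I$, so $r=0$), or simply note that $p\le m$ together with $m\mid p$ already forces $p=m$.
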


	For any complex number $z$, let $\real(z)$ denote the real part of $z$, that is, $\real(z)=\frac{z+\bar{z}}{2}$. Define $\Re$ as the set of real parts of roots of unity, that is, $\Re=\left\{\real(z): z\in \Cl~\text{and}~z^n=1 ~\text{for some positive integer $n$}\right\}$. The next result provides a useful tool for analyzing the periodicity of a graph in terms of its discriminant eigenvalues.	
	\begin{lema}[{\cite[Lemma~2.4]{bhakta2}}]\label{m1}
		Let $G$ be a graph with discriminant $P$. Then $G$ is periodic if and only if $\spec_P(G)\subseteq\Re$. 
	\end{lema}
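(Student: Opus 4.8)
The plan is to reduce periodicity of $G$ to a condition on the eigenvalues of $U$, and then translate that condition through Theorem~\ref{evu_grover} into a condition on the discriminant spectrum. Since $U$ is unitary, it is diagonalizable, so $U^\tau=I$ holds for some positive integer $\tau$ if and only if every eigenvalue $\eta\in\spec_U(G)$ satisfies $\eta^\tau=1$; this is precisely the content of Lemma~\ref{period}. Consequently $G$ is periodic if and only if every eigenvalue of $U$ is a root of unity. Indeed, if each $\eta\in\spec_U(G)$ satisfies $\eta^{m_\eta}=1$ for some positive integer $m_\eta$, then $\tau=\lcm\{m_\eta:\eta\in\spec_U(G)\}$ works; conversely, periodicity forces each eigenvalue to be a root of unity.

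Next I would invoke Theorem~\ref{evu_grover}, which expresses the multi-set of eigenvalues of $U$ as $\{e^{\pm\iu\arccos(\mu_j)}:1\le j\le n\}\cup\{1\}^{b_1}\cup\{-1\}^{b_1-1+1_B}$, where $\mu_1,\ldots,\mu_n$ are the eigenvalues of $P$. The contributions $\pm1$ are always roots of unity, so they impose no constraint. Before handling the remaining eigenvalues, I would record that $P=NSN^*$ is real symmetric (since $S$ is a symmetric permutation matrix and $N$ is real) and that $NN^*=I$, so $\|P\|\le1$ and hence each $\mu_j$ is real and lies in $[-1,1]$. This guarantees that $\arccos(\mu_j)$ is well defined and real, and that $e^{\pm\iu\arccos(\mu_j)}$ has modulus one with real part $\mu_j$.

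The crux is the elementary equivalence: for $\mu\in[-1,1]$, the numbers $e^{\pm\iu\arccos(\mu)}$ are roots of unity if and only if $\mu\in\Re$. For the forward direction, if $\eta=e^{\iu\arccos(\mu)}$ is a root of unity, then $\mu=\cos(\arccos(\mu))=\real(\eta)$ is the real part of a root of unity, so $\mu\in\Re$. For the reverse, if $\mu=\real(z)$ with $z^m=1$, write $z=e^{\iu\phi}$; then $\cos\phi=\mu=\cos(\arccos(\mu))$, forcing $\arccos(\mu)\equiv\pm\phi\Mod{2\pi}$, whence $e^{\pm\iu\arccos(\mu)}\in\{z,\bar z\}$, both of which are roots of unity.

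Finally I would assemble the pieces: $G$ is periodic if and only if every eigenvalue of $U$ is a root of unity, which by the eigenvalue list reduces to requiring $e^{\pm\iu\arccos(\mu_j)}$ to be a root of unity for each $j$, and by the equivalence above this holds exactly when each $\mu_j\in\Re$, that is, $\spec_P(G)\subseteq\Re$. The only genuinely delicate point is the equivalence in the third paragraph; everything else is bookkeeping built on Theorem~\ref{evu_grover} and Lemma~\ref{period}.
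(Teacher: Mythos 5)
Your argument is correct, and it is the standard derivation: reduce periodicity to all eigenvalues of $U$ being roots of unity via Lemma~\ref{period}, pass to the discriminant spectrum through Theorem~\ref{evu_grover}, and observe that $e^{\pm\iu\arccos(\mu)}$ is a root of unity precisely when $\mu\in\Re$. The paper itself imports this lemma from \cite[Lemma~2.4]{bhakta2} without reproving it, and your proof matches the intended route, including the needed facts that $P$ is real symmetric with $\spec_P(G)\subseteq[-1,1]$ and that the extra eigenvalues $\pm1$ impose no constraint.
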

	In~\cite{bhakta2}, the previous result is extended more explicitly.	Let $\Delta=\{a\pm\sqrt{b}:a,b\in \Ql~\text{and}~b ~\text{is not a square}\}$ and $\overline{\Delta}=\Rl\setminus(\Ql\cup\Delta)$. For any subset $F$ of real numbers, define $\spec_P^F(G)=F\cap\spec_P(G)$.
	\begin{theorem}[{\cite[Theorem~4.4]{bhakta2}}]\label{ls}
		Let $G$ be a regular graph with discriminant matrix $P$. Then $G$ is periodic if and only if $$\spec_P^\Ql(G)\subseteq\left\{\pm1,\pm\frac{1}{2},0\right\},~\spec_P^\Delta(G)\subseteq\left\{\pm \frac{\sqrt{3}}{2}, \pm\frac{1}{4}\pm\frac{\sqrt{5}}{4},\pm\frac{1}{\sqrt{2}}\right\}~\text{and}~\spec_P^{\overline{\Delta}}(G)\subseteq \Re.$$
	\end{theorem}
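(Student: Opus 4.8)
The plan is to deduce this refinement directly from Lemma~\ref{m1}, which already states that $G$ is periodic if and only if $\spec_P(G)\subseteq\Re$, where $\Re$ denotes the set of real parts of roots of unity. Since $G$ is regular, I would first record that $P$ is a real symmetric matrix: a short computation from $P=NSN^*$ gives $P_{uv}=\frac{1}{\sqrt{\deg u\,\deg v}}A_{uv}$, so for a $k$-regular graph $P=\frac{1}{k}A$ and hence $\spec_P(G)\subseteq\Rl$. The sets $\Ql$, $\Delta$, and $\overline{\Delta}$ partition $\Rl$ (here $\Delta$ consists precisely of the real quadratic irrationals, each of exact degree $2$ over $\Ql$, since $a\pm\sqrt{b}$ with $b$ a non-square satisfies an irreducible quadratic). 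Consequently the single containment $\spec_P(G)\subseteq\Re$ decomposes into the three separate containments $\spec_P^\Ql(G)\subseteq\Re\cap\Ql$, $\spec_P^\Delta(G)\subseteq\Re\cap\Delta$, and $\spec_P^{\overline{\Delta}}(G)\subseteq\Re$, provided I can compute $\Re\cap\Ql$ and $\Re\cap\Delta$ explicitly.

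The arithmetic heart of the argument is the identification $\Re=\{\cos(2\pi r):r\in\Ql\}$ together with a classification of its elements by algebraic degree. Writing a general root of unity $e^{2\pi\iu k/n}$ with $\gcd(k,n)=d$ as a primitive $(n/d)$th root, every element of $\Re$ is $\real(z)=\cos(2\pi/m)$ for a primitive $m$th root of unity $z$ (up to Galois conjugacy). I would then invoke the standard fact that $2\cos(2\pi/m)$ has degree exactly $\varphi(m)/2$ over $\Ql$ for $m\geq 3$. The rational elements correspond to $\varphi(m)\in\{1,2\}$, i.e.\ $m\in\{1,2,3,4,6\}$, which yields $\Re\cap\Ql=\{0,\pm\tfrac12,\pm1\}$ (this is Niven's theorem). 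The quadratic-irrational elements correspond to $\varphi(m)=4$, i.e.\ $m\in\{5,8,10,12\}$; evaluating $\cos(2\pi j/m)$ over the relevant $j$ with $\gcd(j,m)=1$ produces exactly $\pm\tfrac{\sqrt3}{2}$ (from $m=12$), $\pm\tfrac{1}{\sqrt2}$ (from $m=8$), and $\pm\tfrac14\pm\tfrac{\sqrt5}{4}$ (from $m=5$ and $m=10$), and one checks each of these lies in $\Delta$.

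Assembling these computations, $\spec_P^\Ql(G)\subseteq\Re$ is equivalent to $\spec_P^\Ql(G)\subseteq\{0,\pm\tfrac12,\pm1\}$, and $\spec_P^\Delta(G)\subseteq\Re$ is equivalent to $\spec_P^\Delta(G)\subseteq\{\pm\tfrac{\sqrt3}{2},\pm\tfrac14\pm\tfrac{\sqrt5}{4},\pm\tfrac{1}{\sqrt2}\}$, while for eigenvalues in $\overline{\Delta}$ no further simplification is available and the requirement remains $\spec_P^{\overline{\Delta}}(G)\subseteq\Re$. Since these three conditions are jointly equivalent to $\spec_P(G)\subseteq\Re$, the theorem follows from Lemma~\ref{m1}.

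I expect the main obstacle to be the degree classification in the second paragraph. One must ensure that every element of $\Re$ genuinely is the real part of a primitive root of unity of an appropriate order, that the minimal-polynomial degree of $2\cos(2\pi/m)$ equals $\varphi(m)/2$ exactly (and not just as an upper bound), and that solving $\varphi(m)=4$ exhausts the quadratic case. Verifying that the four orders $m\in\{5,8,10,12\}$ produce precisely the eight listed quadratic values, with no omissions and no spurious additions, is the only genuinely delicate step; the reduction via Lemma~\ref{m1} and the bookkeeping of the partition $\Rl=\Ql\cup\Delta\cup\overline{\Delta}$ are routine.
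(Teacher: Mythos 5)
This theorem is quoted from \cite{bhakta2} and the present paper contains no proof of it, so there is no internal argument to compare against; your reconstruction is correct and is the canonical route that the cited source is following (Lemma~\ref{m1} plus a degree classification of real parts of roots of unity). Specifically, regularity gives $\spec_P(G)\subseteq\Rl$, the sets $\Ql$, $\Delta$, $\overline{\Delta}$ partition $\Rl$ with $\Delta$ exactly the real quadratic irrationals, and the fact that $2\cos(2\pi k/m)$ with $\gcd(k,m)=1$ has degree $\varphi(m)/2$ over $\Ql$ for $m\geq 3$ pins down $m\in\{1,2,3,4,6\}$ for the rational values (Niven's theorem) and $m\in\{5,8,10,12\}$ for the quadratic ones, which evaluate precisely to the eight listed numbers, so the three displayed containments are jointly equivalent to $\spec_P(G)\subseteq\Re$.
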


	For $\Phi,\Uppsi\in \Cl^{\mathcal{A}}$, we denote by $\ip{\Phi}{\Uppsi}$ the Euclidean inner product of $\Phi$ and $\Uppsi$. A vector $\Phi\in \Cl^\mathcal{A}$ is called a \emph{state} if $\ip{\Phi}{\Phi}=1$. We say that perfect state transfer occurs from a state $\Phi$ to another state $\Psi$ at time $\tau\in \Nl$ if there exists a unimodular complex number $\gamma$ such that $$U^\tau\Phi=\gamma \Psi.$$ 
	The next lemma provides an equivalent definition of perfect state transfer between two distinct states.
	\begin{lema}[{\cite[Section~4]{pgst1}}]\label{st11}
		Let $G$ be a graph and $U$ be its time evolution matrix. Then perfect state transfer occurs from a state $\Phi$ to another state $\Psi$ at time $\tau$ if and only if  $|\ip{U^\tau \Phi}{\Uppsi}|=1.$ 
	\end{lema}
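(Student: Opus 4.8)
The plan is to prove the equivalence in Lemma~\ref{st11} by exploiting the unitarity of the time evolution matrix $U$. The statement asserts that perfect state transfer from a state $\Phi$ to a distinct state $\Psi$ at time $\tau$ is equivalent to the single scalar condition $|\ip{U^\tau\Phi}{\Uppsi}|=1$. The definition of perfect state transfer requires the existence of a unimodular $\gamma$ with $U^\tau\Phi=\gamma\Psi$, so one direction is essentially immediate: if $U^\tau\Phi=\gamma\Psi$, then $\ip{U^\tau\Phi}{\Uppsi}=\gamma\ip{\Psi}{\Psi}=\gamma$, whose modulus is $|\gamma|=1$ since $\gamma$ is unimodular and $\Psi$ is a state.

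The substance lies in the converse. First I would record the key normalization facts: since $\Phi$ is a state, $\ip{\Phi}{\Phi}=1$, and since $U$ is unitary, $\ip{U^\tau\Phi}{U^\tau\Phi}=\ip{\Phi}{\Phi}=1$, so $U^\tau\Phi$ is again a unit vector; likewise $\ip{\Psi}{\Psi}=1$. The plan is then to invoke the Cauchy--Schwarz inequality in the form
\begin{equation*}
|\ip{U^\tau\Phi}{\Uppsi}|\leq \|U^\tau\Phi\|\,\|\Psi\|=1,
\end{equation*}
and to use the equality case: Cauchy--Schwarz attains equality if and only if the two vectors are linearly dependent, i.e. one is a scalar multiple of the other. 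Thus $|\ip{U^\tau\Phi}{\Uppsi}|=1$ forces $U^\tau\Phi=\gamma\Psi$ for some scalar $\gamma$.

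It then remains to check that this $\gamma$ is unimodular, which closes the loop to the definition. Taking norms of $U^\tau\Phi=\gamma\Psi$ gives $1=\|U^\tau\Phi\|=|\gamma|\,\|\Psi\|=|\gamma|$, so indeed $|\gamma|=1$. Hence the condition $|\ip{U^\tau\Phi}{\Uppsi}|=1$ produces precisely the unimodular $\gamma$ demanded in the definition of perfect state transfer, completing the converse.

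I do not expect a serious obstacle here, as the argument is a standard application of Cauchy--Schwarz together with the unitarity of $U$; the only point requiring slight care is being explicit about the equality condition in Cauchy--Schwarz and verifying the modulus of the resulting proportionality constant, rather than assuming it. One should also note implicitly that $\Psi\neq\mathbf 0$ (it is a state), so that linear dependence genuinely yields $U^\tau\Phi$ as a multiple of $\Psi$ and the scalar $\gamma$ is well defined.
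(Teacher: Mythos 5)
Your proof is correct and complete: the forward direction is immediate from the definition, and the converse follows from unitarity of $U$ (so $\|U^\tau\Phi\|=\|\Phi\|=1$) together with the equality case of Cauchy--Schwarz, with the unimodularity of $\gamma$ recovered by taking norms. The paper does not prove this lemma itself but cites it from \cite[Section~4]{pgst1}; your argument is the standard one underlying that reference, so there is no divergence to report.
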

	We study the transfer of quantum states that are localized at the vertices of a graph.  A state $\Phi\in \Cl^\mathcal{A}$ is said to be \emph{vertex-type} of a graph $G$ if there exists a vertex $u\in V(G)$ such that $\Phi=N^* \eu$, where $\eu$ is the unit vector corresponding to the vertex $u$, defined by $(\eu)_x=\delta_{u,x}$. In this paper, we consider only vertex-type states. We refer the reader to \cite{kubota1} for a visual illustration and motivation of vertex-type states. The quantum state associated with vertex $u$ is defined as $\Phi_u=N^* \eu$.
	\begin{defi}
		A graph exhibits \emph{perfect state transfer} from vertex $u$ to another vertex $v$ at time $\tau\in\Nl$ if there exists a unimodular complex number $\gamma$ such that $U^\tau\Phi_u = \gamma \Phi_v$.
	\end{defi}
	We say that a graph exhibits perfect state transfer if there exist vertices $u$ and $v$ in the graph such that perfect state transfer occurs from $u$ to $v$ at some time $\tau$. There is a remarkable connection between the occurrence of perfect state transfer and the Chebyshev polynomials (see Lemma~\ref{defpst}).
	
	The \emph{Chebyshev polynomial of the first kind}, denoted $T_m(x)$, is the polynomial defined by the  recurrence relation
	$$T_0(x)=1,~ T_1(x)=x~ \text{and} ~T_m(x)=2xT_{m-1}(x)-T_{m-2}(x) ~\text{for}~ m \geq 2.$$ 
	It is well known that 
	\begin{equation}\label{chb}
		T_m(\cos\theta)=\cos(m\theta).
	\end{equation}
	By \eqref{chb}, it follows that $|T_m(x)|\leq 1$ for $|x|\leq 1$. This yields the following lemma.
	\begin{lema}\label{ch}
		Let $\mu\in[-1,1]$, and let $m$ be any positive integer. Then
		\begin{enumerate}[label=(\roman*)]
			\item $|T_m (\mu)|\leq 1$.
			\item $T_m (\mu)= 1$ if and only if $\mu =\cos\frac{s}{m}\pi$ for some even positive integer $s$.
			\item $T_m (\mu)= -1$ if and only if $\mu =\cos\frac{s}{m}\pi$ for some odd positive integer $s$.
		\end{enumerate}
	\end{lema}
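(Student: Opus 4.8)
The plan is to reduce every assertion to the identity \eqref{chb}. Since $\mu \in [-1,1]$, there is a unique angle $\theta \in [0,\pi]$ with $\mu = \cos\theta$, and substituting into \eqref{chb} gives $T_m(\mu) = T_m(\cos\theta) = \cos(m\theta)$. Part (i) is then immediate, as $|T_m(\mu)| = |\cos(m\theta)| \le 1$. This substitution is the whole engine of the proof; everything else is bookkeeping of angles.

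For part (ii), I would note that with $\mu = \cos\theta$ one has $T_m(\mu) = 1 \iff \cos(m\theta) = 1 \iff m\theta = 2k\pi$ for some integer $k \ge 0$ (the nonnegativity coming from $\theta \ge 0$), i.e.\ $\theta = \frac{2k}{m}\pi$. For the forward direction one sets $s = 2k$, which is even and, provided $k \ge 1$, positive. The sole subtle point is the boundary case $k = 0$, i.e.\ $\mu = 1$: here $s = 0$ is not allowed, so one instead rewrites $\mu = 1 = \cos 2\pi = \cos\frac{2m}{m}\pi$ and takes the positive even integer $s = 2m$. Conversely, if $\mu = \cos\frac{s}{m}\pi$ with $s$ even, then \eqref{chb} gives $T_m(\mu) = \cos(s\pi) = 1$ directly.

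Part (iii) runs in parallel: $T_m(\mu) = -1 \iff \cos(m\theta) = -1 \iff m\theta = (2k+1)\pi$ for some integer $k \ge 0$, so $\theta = \frac{2k+1}{m}\pi$ and $s = 2k+1$ is automatically a positive odd integer, with no boundary case to worry about since $2k+1 \ge 1$ always. The converse again follows at once from \eqref{chb}, because $\cos(s\pi) = -1$ whenever $s$ is odd. Thus the only genuine wrinkle in the whole argument is ensuring the positivity of $s$ in the forward direction of part (ii) when $\mu = 1$; once the substitution $\mu = \cos\theta$ is fixed, the remaining steps are routine trigonometric computations.
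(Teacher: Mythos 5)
Your proposal is correct and follows exactly the route the paper intends: the paper offers no separate proof of Lemma~\ref{ch}, presenting it as an immediate consequence of the identity $T_m(\cos\theta)=\cos(m\theta)$ in \eqref{chb}, which is precisely the substitution driving your argument. Your extra care with the boundary case $\mu=1$ (replacing $s=0$ by $s=2m$) is a valid and worthwhile detail that the paper leaves implicit.
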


	In \cite{kubota1}, Kubota and Segawa established a relationship between the time-evolution matrix and the discriminant matrix via Chebyshev polynomials.
	\begin{lema}[{\cite[Lemma~3.1]{kubota1}}]\label{ch11}
		Let $T_m(x)$ be the Chebyshev polynomial of the first kind.	Let $G$ be a graph with the time evolution matrix $U$ and discriminant $P$. Then $NU^\tau N^* = T_\tau (P)$ for $\tau \in \Nl \cup \{0\}$. 
	\end{lema}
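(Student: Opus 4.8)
The plan is to prove the identity by induction on $\tau$, showing that $R_\tau := N U^\tau N^*$ satisfies the same three-term recurrence and the same initial conditions as the Chebyshev expressions $T_\tau(P)$. Before starting the induction I would record a handful of elementary identities among $N$, $S$, $K$ and $U$. First, the degree normalization built into $N$ is exactly what forces $N N^* = I$: the $(u,v)$ entry of $NN^*$ equals $\frac{1}{\sqrt{\deg u\,\deg v}}$ times the number of arcs $a$ with $t(a)=u$ and $t(a)=v$ simultaneously, which is $0$ when $u\neq v$ and $\deg u$ when $u=v$. Second, $S$ is the involution swapping each arc with its reverse, so $S=S^*=S^{-1}$ and $S^2=I$. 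Combining $NN^*=I$ with $K=2N^*N-I$ gives $NK = 2(NN^*)N - N = N$, and $S^2=I$ gives $SU = S^2K = K$.

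With these in hand I would derive the two workhorse relations. Since $UN^* = SKN^* = S(2N^*N-I)N^* = 2SN^*(NN^*) - SN^* = SN^*$, we have $UN^*=SN^*$, and multiplying on the left by $N$ yields $NUN^* = NSN^* = P$. Dually, $NU = NSK = NS(2N^*N-I) = 2(NSN^*)N - NS = 2PN - NS$. The base cases are then immediate: $R_0 = NN^* = I = T_0(P)$ and $R_1 = NUN^* = P = T_1(P)$.

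For the inductive step, using $NU = 2PN - NS$ I write, for $m\geq 1$, $R_{m+1} = (NU)U^mN^* = 2P\,(NU^mN^*) - NSU^mN^* = 2P\,R_m - NSU^mN^*$. The crucial point is to evaluate $NSU^mN^*$. Here $SU=K$ gives $SU^m = (SU)U^{m-1} = KU^{m-1}$, and then $NK=N$ collapses the coin matrix, so $NSU^mN^* = NKU^{m-1}N^* = NU^{m-1}N^* = R_{m-1}$. This produces $R_{m+1} = 2P\,R_m - R_{m-1}$, which is precisely the recurrence $T_{m+1}(P) = 2P\,T_m(P) - T_{m-1}(P)$, and the induction closes, establishing $NU^\tau N^* = T_\tau(P)$ for all $\tau\in\Nl\cup\{0\}$.

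All of the computations are routine linear algebra, so there is no analytic difficulty; the entire content lies in selecting the correct intermediate identities. The step I would expect to be the heart of the argument is the telescoping identity $NSU^mN^* = NU^{m-1}N^*$, since it is what converts one extra power of $U$ into the scalar factor $P$ in the recurrence. It rests on the two facts $SU=K$ and $NK=N$, and the latter is exactly where the specific $\frac{1}{\sqrt{\deg u}}$ normalization of the boundary matrix $N$ is used.
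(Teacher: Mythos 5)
The paper does not reprove this lemma---it is quoted from Kubota--Segawa---and your argument is correct and is essentially the standard inductive proof given there: the identities $NN^*=I$, $UN^*=SN^*$, $NU=2PN-NS$, and $NSU^m N^*=NKU^{m-1}N^*=NU^{m-1}N^*$ all check out, yielding the Chebyshev recurrence $R_{m+1}=2PR_m-R_{m-1}$ with the right initial conditions. Nothing to add.
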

	Note that the discriminant $P$ of a graph is a symmetric matrix. In fact, if a graph is $k$-regular then $P=\frac{1}{k}A$ (see Lemma~\ref{reg}). Thus, it has a spectral decomposition. Let the distinct eigenvalues of $P$ be $\mu_0,\hdots,\mu_d$ such that $\mu_0>\cdots>\mu_d$. For each $j\in\{0,\hdots,d\}$, let $E_j$ denote the orthogonal projection matrix onto the eigenspace corresponding to $\mu_j$. Then the spectral decomposition of $P$ is
	\begin{equation}\label{sd1}
		P=\sum_{j=0}^{d} \mu_j E_j. 
	\end{equation}
	For $j\in\{0,\hdots,d\}$,	the projection matrix $E_j$ satisfy the property
	\begin{equation}\label{sd11}
		E_j^*=E_j,\quad E_iE_j=\delta_{i,j}E_j,\quad \text{and}\quad \sum_{j=0}^{d}E_j=I.
	\end{equation}
	Using \eqref{sd1} and \eqref{sd11}, for any polynomial $h(x)$, we have
	\begin{equation}\label{sd}
		h(P)=\sum_{j=0}^{d} h(\mu_j) E_j.
	\end{equation} 
	We now briefly review association schemes and distance-regular graphs. For a detailed discussion of association schemes, we refer the reader to \cite{algcomb}, and for distance-regular graphs, to \cite{brouwer_drt}. 
	\subsection{Association schemes}
	An \emph{association scheme} with $d$ classes is a set $\mathcal{A}:=\{A_0,\hdots,A_d\}$ of $n\times n$ $(0,1)$-matrices that satisfies the following conditions:
	\begin{enumerate}
		\item[(i)] $A_0=I$,
		\item[(ii)] $\sum_{i=0}^{d}A_i=J$,
		\item[(iii)] $A_i$ is symmetric for $i\in\{0,\hdots,d\}$, and
		\item[(iv)] for all $i$ and $j$, the product $A_iA_j$ is a linear combination of $A_0,\hdots,A_d$.
	\end{enumerate}
	The matrices $A_i$ are called \emph{classes} of the association scheme. The matrix algebra spanned by these classes is known as the \emph{Bose–Mesner algebra}, typically denoted by $\mathbb{C}[\mathcal{A}]$. We say a graph belongs to an association scheme $\Ac$ if its adjacency matrix lies in the Bose–Mesner algebra $\Ca$.
	
	By properties (iii) and (iv), $\mathbb{C}[\mathcal{A}]$ forms a commutative algebra. Moreover, since $J \in \mathbb{C}[\mathcal{A}]$, every matrix in $\mathbb{C}[\mathcal{A}]$ has constant row sum and constant column sum. Thus, a graph in an association scheme must be regular. By the property (ii), the classes $A_i$ are linearly independent, and so $\mathcal{A}$ is a basis of $\Cl[\mathcal{A}]$.  Since the matrices $A_i$ are symmetric and commute with each other, they can be simultaneously diagonalized. Therefore, there exists a set of orthogonal idempotent matrices ${E_0, \ldots, E_d}$ that also forms a basis for $\mathbb{C}[\mathcal{A}]$. This leads to the following theorem.
	\begin{theorem}[{\cite[Section 11.2]{spectraofgraphs}}]\label{sd3}
		Let $\Ac$ be an association scheme with $d$ classes. Then there exists a basis $\{E_0,\hdots,E_d\}$ of $\Ca$ and numbers $p_i(j)$ such that
		\begin{enumerate}
			\item[(i)] $E_0=\frac{1}{n}J$,
			\item[(ii)]$\sum_{j=0}^{d}E_j=I$, $E_iE_j=\delta_{i,j}E_i$, $E_j^*=E_j$, for each $i,j\in\{0,\hdots,d\}$,  and
			\item[(iii)] $A_iE_j=p_i(j)E_j$ for each $i,j\in\{0,\hdots,d\}$.
		\end{enumerate}
	\end{theorem}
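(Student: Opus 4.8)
The plan is to construct the idempotents $E_j$ as the orthogonal projections onto the common eigenspaces of the classes $A_0,\ldots,A_d$, and then verify that these projections lie in $\mathbb{C}[\mathcal{A}]$, form a basis, and satisfy (i)--(iii). As already observed before the statement, properties (iii) and (iv) make $\mathbb{C}[\mathcal{A}]$ a commutative algebra of real symmetric matrices, so the $A_i$ are normal and pairwise commuting. Hence by the spectral theorem they can be simultaneously diagonalized by a single orthonormal basis of $\mathbb{C}^n$, which decomposes $\mathbb{C}^n$ into an orthogonal direct sum of common eigenspaces $V_0,\ldots,V_m$, where on each $V_j$ every $A_i$ acts as a scalar that I call $p_i(j)$. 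I then let $E_j$ be the orthogonal projection onto $V_j$.

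Next I would verify property (ii). Since the $V_j$ are mutually orthogonal and their direct sum is all of $\mathbb{C}^n$, the associated projections automatically satisfy $E_j^*=E_j$, $E_iE_j=\delta_{i,j}E_j$, and $\sum_j E_j=I$; these are standard facts about the projections onto an orthogonal decomposition. Property (iii) is then immediate: for any vector $x$, the vector $E_j x$ lies in $V_j$, on which $A_i$ acts as the scalar $p_i(j)$, so $A_iE_jx=p_i(j)E_jx$, giving $A_iE_j=p_i(j)E_j$.

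The crux is to show that each $E_j$ belongs to $\mathbb{C}[\mathcal{A}]$ and that $m=d$, so that $\{E_0,\ldots,E_d\}$ is indeed a basis. For this I would choose a generic element $A=\sum_i c_iA_i\in\mathbb{C}[\mathcal{A}]$ whose scalar values $\lambda_j=\sum_i c_ip_i(j)$ on $V_0,\ldots,V_m$ are pairwise distinct; such $c_i$ exist because distinct common eigenspaces are separated by at least one $A_i$, so the bad choices of $(c_i)$ lie in finitely many hyperplanes. Then each $E_j$ is a spectral projection of $A$, and by Lagrange interpolation $E_j=\prod_{k\neq j}\frac{A-\lambda_k I}{\lambda_j-\lambda_k}$ is a polynomial in $A$, hence lies in $\mathbb{C}[\mathcal{A}]$. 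Being nonzero orthogonal idempotents, the $E_j$ are linearly independent, and they span $\mathbb{C}[\mathcal{A}]$ since every element of the algebra is diagonal on the $V_j$. As $\{A_0,\ldots,A_d\}$ is already a basis by property (ii), we have $\dim\mathbb{C}[\mathcal{A}]=d+1$, which forces $m=d$. This spanning/counting step is the main obstacle, since it is what pins the number of idempotents to exactly $d+1$ and identifies the spectral projections of $A$ with the common-eigenspace projections.

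Finally, for property (i) I would identify $E_0$ explicitly. Each $A_i$ has constant row sum equal to its valency, so the all-ones vector $\j$ is a common eigenvector and $\operatorname{span}(\j)$ is a one-dimensional common eigenspace. The matrix $\frac{1}{n}J$ is the orthogonal projection onto $\operatorname{span}(\j)$: it lies in $\mathbb{C}[\mathcal{A}]$ because $J=\sum_i A_i$, it is self-adjoint, and $(\frac{1}{n}J)^2=\frac{1}{n}J$. Having rank one, it cannot be written as a sum of two nonzero orthogonal idempotents of the algebra, so it is a primitive idempotent and therefore coincides with one of the $E_j$; after relabeling I set $E_0=\frac{1}{n}J$, completing the proof.
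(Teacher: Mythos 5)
Your proof is correct and follows the same route the paper indicates: the paper cites this theorem to \cite[Section 11.2]{spectraofgraphs} without proof, but its preceding paragraph sketches exactly your argument (commutativity of $\Ca$, simultaneous diagonalization of the symmetric classes, and the resulting orthogonal idempotent basis), which you complete with the standard interpolation and rank-one details.
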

	The matrices $E_0,\hdots,E_d$ are called the \emph{principal idempotents} of the association scheme. The numbers $p_i(j)$ for $i,j\in\{0,\hdots,d\}$ are referred to as the \emph{eigenvalues} of the scheme. Let $m_j=\operatorname{rank}(E_j)$ for $j\in\{0,\hdots,d\}$. Then by property (iii) of the previous theorem, $m_j$ is the multiplicity of the eigenvalue $p_i(j)$ of $A_i$, assuming $p_i(j)\neq p_i(k)$ for $j\neq k$. In particular, we have $m_0 = 1$ and $\sum_{j=0}^{d} m_j = n$. See~\cite{spectraofgraphs} for more discussion. Note that any matrix in the Bose-Mesner algebra $\Ca$ with $d$ classes has at most $d+1$ distinct eigenvalues. This leads to the following result.
	
	\begin{lema}\label{sd4}
		Let $G$ be a graph belonging to an association scheme of $d$ classes. If $G$ has $d+1$ distinct adjacency eigenvalues, then the principal idempotents of the scheme are precisely the principal idempotents in the spectral decomposition of the adjacency matrix of $G$.  
	\end{lema}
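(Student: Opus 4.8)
The plan is to express the adjacency matrix of $G$ in terms of the scheme's data and then identify its spectral projections with the principal idempotents of $\Ac$. Since $G$ belongs to the association scheme, its adjacency matrix $A$ lies in the Bose--Mesner algebra, so I would write $A=\sum_{i=0}^{d}c_iA_i$ for suitable scalars $c_i$. Applying property (iii) of Theorem~\ref{sd3} termwise gives $AE_j=\sum_{i=0}^{d}c_iA_iE_j=\bigl(\sum_{i=0}^{d}c_ip_i(j)\bigr)E_j$, so each $E_j$ is an eigenprojection of $A$ with associated scalar $\lambda_j:=\sum_{i=0}^{d}c_ip_i(j)$.

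Next, using $\sum_{j=0}^{d}E_j=I$ from property (ii), I would write $A=A\sum_{j=0}^{d}E_j=\sum_{j=0}^{d}\lambda_jE_j$. Because each $E_j$ is a nonzero idempotent, being a basis element of the $(d+1)$-dimensional algebra $\Ca$, every $\lambda_j$ genuinely occurs as an eigenvalue of $A$, and the set of distinct eigenvalues of $A$ equals the set of distinct values among $\lambda_0,\hdots,\lambda_d$, a collection of at most $d+1$ numbers.

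The key counting step is the hypothesis that $G$ has exactly $d+1$ distinct eigenvalues. If any two of the $\lambda_j$ coincided, then $A$ would have at most $d$ distinct eigenvalues, contradicting the hypothesis. Hence $\lambda_0,\hdots,\lambda_d$ are pairwise distinct, and $A=\sum_{j=0}^{d}\lambda_jE_j$ exhibits $A$ as a sum, indexed by its distinct eigenvalues, of orthogonal idempotents satisfying $E_iE_j=\delta_{i,j}E_i$, $E_j^*=E_j$, and $\sum_{j=0}^{d}E_j=I$.

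Finally, I would invoke the uniqueness of the spectral decomposition of a Hermitian matrix: for pairwise distinct real eigenvalues, the orthogonal projections onto the corresponding eigenspaces are uniquely determined. Consequently the $E_j$ are precisely the principal idempotents appearing in the spectral decomposition \eqref{sd1} of $A$, which is the claim. I do not anticipate a serious obstacle; the only point requiring care is verifying that each $E_j$ is nonzero, so that each $\lambda_j$ truly appears as an eigenvalue, and this follows because the $E_j$ form a basis of $\Ca$. The argument is essentially pigeonhole combined with uniqueness of spectral projections.
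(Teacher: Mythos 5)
Your argument is correct and is exactly the reasoning the paper leaves implicit: the lemma is presented there as an immediate consequence of Theorem~\ref{sd3} together with the remark that a matrix in $\Ca$ has at most $d+1$ distinct eigenvalues, which is precisely your pigeonhole step forcing the $\lambda_j$ to be pairwise distinct. Your write-up simply makes explicit the identification of each $E_j$ with the orthogonal eigenprojection of $A$ via uniqueness of the spectral decomposition, so it matches the paper's approach.
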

	Let $G$ be a graph belonging to an association scheme of $d$ classes. Then by Lemma~\ref{sd4}, if $G$ has $d+1$ distinct adjacency eigenvalues, then the idempotents $E_j$ in \eqref{sd1} are the same as those described in Theorem~\ref{sd3}.
	\subsection{Distance-regular graphs}\label{def_distance}
	A connected graph is called \emph{distance-regular} if, for any two vertices $u$ and $v$, the number of vertices that are at distance $i$ from $u$ and distance $j$ from $v$ depends only on $i$, $j$, and the distance between $u$ and $v$.
	
	Let $G$ be a distance-regular graph with diameter $d$. For $i\in\{0,\hdots,d\}$ and $u\in V(G)$, we define $\Gamma_i(u)$ as the set of all vertices in $G$ that are at distance $i$ from $u$. Then there exist non-negative integers $b_0,\hdots,b_{d-1}$ and $c_1,\hdots,c_d$ such that for any two vertices $u$ and $v$ at distance $i$, the following hold:
	\begin{align*}
		b_i&=|\Gamma_{i+1}(u)\cap \Gamma_1(v)|\quad \text{for}~i\in\{0,\hdots,d-1\},~\text{and}\\
		c_i&=|\Gamma_{i-1}(u)\cap \Gamma_1(v)|\quad \text{for}~i\in\{1,\hdots,d\}.
	\end{align*}
	Note that $c_1=1$ and the graph is $b_0$-regular. Hence the number of neighbors of $v$ that are at distance $i$ from $u$ is denoted by $a_i$, and is determined by the relations
	$$a_i+b_i+c_i=b_0~\text{for}~i\in\{1,\hdots,d-1\},~\text{and}~a_d+c_d=b_0.$$
	The list of parameters
	$$\{b_0,\hdots,b_{d-1};c_1,\hdots,c_d\}$$
	is called the \emph{intersection array} of a distance-regular graph. The intersection array satisfies the following well-known necessary conditions.
	\begin{lema}[{\cite[Proposition~4.1.6]{brouwer_drt}}]\label{intarray}
		The intersection array $\{b_0,\hdots,b_{d-1};c_1,\hdots,c_d\}$ of a distance-regular graph with valency $r$ satisfies the following two conditions.
		\begin{enumerate}
			\item[(i)] $r=b_0\geq \cdots\geq b_{d-1}>0$,  and
			\item[(ii)] $1=c_1\leq\cdots\leq c_d\leq b_0.$
		\end{enumerate}
	\end{lema}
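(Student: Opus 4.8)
The plan is to split the statement into two kinds of assertions: the four \emph{boundary facts} ($r=b_0$, $b_{d-1}>0$, $c_1=1$, and $c_d\le b_0$), which should fall straight out of the definitions, and the two \emph{monotonicity chains} $b_0\ge\cdots\ge b_{d-1}$ and $c_1\le\cdots\le c_d$, which carry the real content. I would dispose of the boundary facts first. Taking $u=v$ (distance $0$) in the definition of $b_0$ gives $b_0=|\Gamma_1(u)|=\deg(u)=r$; taking $\dist_G(u,v)=1$ gives $c_1=|\{u\}\cap\Gamma_1(v)|=1$ since $u\sim v$; and $c_d=|\Gamma_{d-1}(u)\cap\Gamma_1(v)|\le|\Gamma_1(v)|=b_0$. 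For $b_{d-1}>0$ I would use that the graph has diameter $d$: choosing $p,q$ with $\dist_G(p,q)=d$ and letting $q'$ be the neighbor of $q$ on a geodesic toward $p$, so that $\dist_G(p,q')=d-1$, the vertex $q$ witnesses $b_{d-1}=|\Gamma_d(p)\cap\Gamma_1(q')|\ge1$.

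The heart of the argument is a single geodesic-plus-triangle-inequality comparison that yields both monotonicity statements at once. Fix an index $i$ and pick vertices $u,w$ with $\dist_G(u,w)=i+1$, together with a neighbor $x$ of $u$ lying on a shortest $u$--$w$ path, so that $\dist_G(x,w)=i$. I would then compare the neighbors of the common target $w$ as seen from the two base points $u$ and $x$. For the $c$-inequality, any neighbor $y$ of $w$ with $\dist_G(x,y)=i-1$ satisfies $\dist_G(u,y)\le \dist_G(u,x)+\dist_G(x,y)=i$ and $\dist_G(u,y)\ge \dist_G(u,w)-\dist_G(w,y)=i$, hence $\dist_G(u,y)=i$; this gives the inclusion $\Gamma_{i-1}(x)\cap\Gamma_1(w)\subseteq\Gamma_i(u)\cap\Gamma_1(w)$ and therefore $c_i\le c_{i+1}$. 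For the $b$-inequality, any neighbor $z$ of $w$ with $\dist_G(u,z)=i+2$ satisfies $\dist_G(x,z)\ge \dist_G(u,z)-\dist_G(u,x)=i+1$ and $\dist_G(x,z)\le \dist_G(x,w)+\dist_G(w,z)=i+1$, hence $\dist_G(x,z)=i+1$; this gives $\Gamma_{i+2}(u)\cap\Gamma_1(w)\subseteq\Gamma_{i+1}(x)\cap\Gamma_1(w)$ and therefore $b_{i+1}\le b_i$. Chaining over the admissible range of $i$ and combining with the boundary facts produces exactly (i) and (ii).

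I expect the main obstacle to be the choice of the auxiliary vertex $x$: the two containments above hold as genuine set inclusions (rather than injections one must build by hand) only because $x$ is taken on a $u$--$w$ geodesic, which forces $\dist_G(x,w)=i$ and lets the triangle inequality pin the relevant distances to exact values. A naive attempt to map the level-$(i{+}1)$ data directly into the level-$i$ data tends to fail, since such maps land vertices at distance $2$ from the target instead of at the required distance $1$. Once the correct base-point shift is identified, the remainder is just the two triangle-inequality squeezes and the bookkeeping of the intersection parameters, all of which are routine.
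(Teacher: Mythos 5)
Your proof is correct and is essentially the standard argument: the paper itself gives no proof, citing Proposition 4.1.6 of Brouwer--Cohen--Neumaier, and the proof there is exactly your geodesic base-point shift (take $x\sim u$ on a shortest $u$--$w$ path and squeeze distances with the triangle inequality to get the inclusions $\Gamma_{i-1}(x)\cap\Gamma_1(w)\subseteq\Gamma_i(u)\cap\Gamma_1(w)$ and $\Gamma_{i+2}(u)\cap\Gamma_1(w)\subseteq\Gamma_{i+1}(x)\cap\Gamma_1(w)$), together with the same boundary observations for $b_0=r$, $c_1=1$, $c_d\le b_0$, and $b_{d-1}>0$.
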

	Let $G$ be a distance-regular graph with diameter $d$. For each $i\in\{1,\hdots,d\}$, define the graph $G_i$ with vertex set $V(G)$ and two vertices $u$ and $v$ are adjacent in $G_i$ if their distance in $G$ is exactly $i$, that is, $\dist_G(u,v)=i$. Let $A$ be the adjacency matrix of $G$. Also, let $A_0=I$ and $A_i$ be the adjacency matrix of $G_i$ for each $i\in\{1,\hdots,d\}$.  By the definition of distance-regular graph, one can prove that 
	\begin{equation}\label{recc}
		AA_i=b_{i-1}A_{i-1}+a_iA_i+c_{i+1}A_{i+1}~\text{for}~i\in\{1,\hdots,d-1\}
	\end{equation}
	Using induction and the Equation~\eqref{recc}, one can show that there exist constants $p_{ij}^k$, which depend only on the intersection array, such that
	$$A_iA_j=\sum_{k=0}^{d}p_{ij}^kA_k.$$
	Therefore, if $G$ is distance-regular graph with diameter $d$, the set of matrices $\{A_0,\hdots,A_d\}$ form an association scheme. However, not every association scheme arises from a distance-regular graph. There are specific conditions that characterize when an association scheme corresponds to a distance-regular graph. For more details, we refer the reader to \cite[Theorems 5.6 and 5.16]{algasso}.
	
	A distance-regular graph $G$ of diameter $d$ is said to be \emph{antipodal} if the vertices at distance $d$ from a given vertex are also at distance $d$ from each other. In this case, the graph $G_d$ is a disjoint union of cliques of equal size, and these cliques are referred to as \emph{fibres}.
	\section{Perfect state transfer on association schemes}
	This section investigates perfect state transfer on graphs in association schemes. We first provide a necessary and sufficient condition for perfect state transfer to occur on graphs. The next result provides a connection between the discriminant and the adjacency matrix of a graph. Let $D:=D(G)$ be the diagonal matrix whose diagonal entries are the
	degrees of the vertices of a graph $G$. If $D=\operatorname{diag}(d_1,\hdots,d_n)$, then for any real number $s$, we define $D^s:=\operatorname{diag}(d_1^s,\hdots,d_n^s)$.
	\begin{lema}\label{evc_P}
		Let $P$ and $A$ be the discriminant and adjacency matrices, respectively, of a graph. Then $P=D^{-\frac{1}{2}}AD^{-\frac{1}{2}}$. Further, $1$ is an eigenvalue of $P$, and for any eigenvalue $\mu$ of $P$,  $|\mu|\leq1$.
	\end{lema}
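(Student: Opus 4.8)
The plan is to unpack the definitions of the boundary matrix $N$ and shift matrix $S$ directly, and to compute the entries of $P = NSN^*$ in terms of the adjacency matrix $A$ and the degree matrix $D$. First I would record from the definition that $N_{ua} = \frac{1}{\sqrt{\deg u}}\delta_{u,t(a)}$ and $S_{ab} = \delta_{a,b^{-1}}$. For the conjugate transpose, $(N^*)_{bw} = \overline{N_{wb}} = \frac{1}{\sqrt{\deg w}}\delta_{w,t(b)}$, where the entries are real so conjugation is harmless. With these in hand I would write $P_{uw} = \sum_{a,b} N_{ua} S_{ab} (N^*)_{bw}$ and evaluate the sum. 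The Kronecker deltas force $t(a)=u$, $b=a^{-1}$, and $t(b)=w$; since $t(a^{-1}) = o(a)$, the surviving terms are exactly those arcs $a$ with terminus $u$ and origin $w$, i.e.\ arcs $a=(w,u)$. Such an arc exists precisely when $uw \in E(G)$, and then it is unique, contributing $\frac{1}{\sqrt{\deg u}}\cdot\frac{1}{\sqrt{\deg w}}$. Hence $P_{uw} = \frac{A_{uw}}{\sqrt{\deg u}\sqrt{\deg w}}$, which is exactly the $(u,w)$ entry of $D^{-\frac{1}{2}}A D^{-\frac{1}{2}}$. I expect this index-chasing step to be the only real computation, and the mild subtlety is just correctly tracking that $t(a^{-1})=o(a)$.

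For the spectral claims, I would next observe that $P = D^{-\frac12}AD^{-\frac12}$ is similar to the (row-stochastic) normalized adjacency matrix $D^{-1}A$ via $P = D^{\frac12}(D^{-1}A)D^{-\frac12}$, so $P$ and $D^{-1}A$ have the same eigenvalues. To show $1$ is an eigenvalue, I would exhibit $D^{\frac12}\j$ as an eigenvector of $P$: since $D^{-1}A$ has row sums equal to $1$, we have $(D^{-1}A)\j = \j$, and transporting through the similarity gives $P(D^{\frac12}\j) = D^{\frac12}\j$. Equivalently, one checks directly that the vector with entries $\sqrt{\deg u}$ is fixed by $P$.

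For the bound $|\mu|\le 1$, the cleanest route is Perron--Frobenius / Gershgorin applied to $D^{-1}A$: being a nonnegative matrix with all row sums equal to $1$, its spectral radius is $1$, so every eigenvalue $\lambda$ of $D^{-1}A$ satisfies $|\lambda|\le 1$, and the same holds for $P$ by similarity. Alternatively, since $P$ is real symmetric one may argue spectrally: for a unit eigenvector $\x$ with $P\x = \mu\x$, set $\y = D^{-\frac12}\x$ and bound $|\mu| = |\ip{P\x}{\x}| = |\ip{A\y}{\y}|$ using that $\|A\y\|$ is controlled by the degrees, but the Gershgorin argument on $D^{-1}A$ is shorter and avoids extra estimates. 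I expect no genuine obstacle here beyond being careful that the similarity transformation preserves the spectrum, which it does since $D^{\frac12}$ is invertible.
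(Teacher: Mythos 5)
Your proposal is correct. The computation of the entries of $P=NSN^*$ and the identification of $D^{\frac{1}{2}}\j$ as an eigenvector for the eigenvalue $1$ are essentially identical to the paper's argument (the paper likewise chases the Kronecker deltas to get $P_{uv}=\frac{1}{\sqrt{\deg u\,\deg v}}$ on edges and $0$ otherwise, and then verifies $P(D^{\frac12}\j)=D^{\frac12}\j$ directly). Where you diverge is the bound $|\mu|\leq 1$: you conjugate $P$ into the row-stochastic matrix $D^{-1}A$ and invoke Gershgorin (all diagonal entries are $0$ and all off-diagonal row sums equal $1$, so every eigenvalue lies in the unit disk), whereas the paper stays with $P$ itself and runs a weighted $\ell^1$ estimate: from $|\mu||y_u|\leq\sum_v P_{uv}|y_v|$ it sums against the positive vector $x=D^{\frac12}\mathbf{1}$, uses the symmetry of $P$ and $Px=x$ to get $|\mu|\sum_u x_u|y_u|\leq\sum_v x_v|y_v|$, and concludes since $x_u>0$. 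The two arguments are close cousins — both are Perron--Frobenius-flavored uses of the positive eigenvector for the eigenvalue $1$ — but yours outsources the estimate to a named theorem and is a line shorter, while the paper's is self-contained and never leaves the symmetric matrix $P$. Either is perfectly acceptable; there is no gap in your version.
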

	\begin{proof}
		For two vertices $u$ and $v$ of the graph, we have
		\begin{align*}	
			P_{uv}&=\sum_{e,f\in\mathcal{A}(G)}N_{ue}S_{ef}N_{vf}\\
			&=\mathop{\sum_{e,f\in\mathcal{A}(G)}}_{t(e)=u, t(f)=v} \frac{1}{\sqrt{\deg u}}\frac{1}{\sqrt{\deg v}}\delta_{e,f^{-1}}\\
			&=\left\{ \begin{array}{ll}
				\frac{1}{\sqrt{\deg u \deg v}} &\mbox{ if }
				uv\in E(G) \\ 
				0 &\textnormal{ otherwise.}
			\end{array}\right. \\
			&=\left(D^{-\frac{1}{2}}AD^{-\frac{1}{2}}\right)_{uv}.
		\end{align*} 
		Therefore $P=D^{-\frac{1}{2}}AD^{-\frac{1}{2}}$. Now $P(D^\frac{1}{2} \j)=(D^{-\frac{1}{2}}AD^{-\frac{1}{2}})(D^\frac{1}{2}\j)=D^\frac{1}{2}\j$. Thus $1$ is an eigenvalue of $P$ with corresponding eigenvector $D^\frac{1}{2}\j$.
		
		Let $\mu$ be an eigenvalue of $P$ with corresponding eigenvector $y$, that is, $Py=\mu y$. Let $y=(y_u)_{u\in V(G)}$. This gives
		$$|\mu|| y_u|\leq \sum_{v\in V(G)} P_{uv} |y_v|~~\text{for each}~u\in V(G).$$
		Define $x=D^\frac{1}{2}\textbf{1}$ and let $x=(x_u)_{u\in V(G)}$.	Thus we have
		\begin{align*}
			|\mu|\sum_{u\in V(G)} x_u|y_u|&\leq\sum_{u\in V(G)} x_u\sum_{v\in V(G)}P_{uv}|y_v|\\
			&=\sum_{v\in V(G)} |y_v|\sum_{u\in V(G)}P_{vu}x_u \\
			&=\sum_{v\in V(G)} x_v |y_v|.
		\end{align*}
		Note that $x_u>0$ for each $u\in V(G)$. Therefore $|\mu|\leq 1$.
	\end{proof}
	By Lemma~\ref{evc_P}, if  $G$ is a regular graph, then the discriminant of the graph can be expressed in terms of the adjacency matrix.
	\begin{lema}\label{reg}
		If $G$ is a $k$-regular graph, then $P=\frac{1}{k}A$. 
	\end{lema}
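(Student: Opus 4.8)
The plan is to derive this directly from Lemma~\ref{evc_P}, which already establishes the general formula $P = D^{-\frac{1}{2}} A D^{-\frac{1}{2}}$ for the discriminant of any graph in terms of its adjacency matrix and degree matrix. The only additional ingredient is the observation that regularity collapses the degree matrix to a scalar multiple of the identity.

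First I would note that since $G$ is $k$-regular, every vertex has degree $k$, so the diagonal degree matrix satisfies $D = kI$. By the convention introduced just before Lemma~\ref{evc_P}, raising a diagonal matrix to a real power acts entrywise on the diagonal, hence $D^{-\frac{1}{2}} = k^{-\frac{1}{2}} I$. Substituting into the formula from Lemma~\ref{evc_P} gives
\begin{equation*}
    P = D^{-\frac{1}{2}} A D^{-\frac{1}{2}} = \left(k^{-\frac{1}{2}} I\right) A \left(k^{-\frac{1}{2}} I\right) = k^{-1} A = \frac{1}{k} A,
\end{equation*}
since scalar multiples of the identity commute with $A$ and multiply together to give $k^{-1}I$.

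There is essentially no obstacle here: the entire content is packaged in the already-proved Lemma~\ref{evc_P}, and this statement is a one-line specialization. The only thing to be careful about is invoking the entrywise-power convention for $D^{s}$ correctly so that $D^{-\frac{1}{2}}$ is unambiguously $k^{-\frac{1}{2}}I$ in the regular case. I expect the proof to be a single short computation with no hidden difficulties.
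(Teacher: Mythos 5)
Your proof is correct and matches the paper's approach exactly: the paper presents Lemma~\ref{reg} as an immediate consequence of the formula $P = D^{-\frac{1}{2}} A D^{-\frac{1}{2}}$ from Lemma~\ref{evc_P}, specialized via $D = kI$. Your one-line substitution is precisely the intended argument.
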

	In \cite[Theorem~6.5]{kubota2}, Kubota and Yoshino showed that a regular graph exhibits perfect state transfer  from a vertex $u$ to a vertex $v$ at time $\tau$ if and only if  $T_\tau(P)\eu=\ev$. In \cite{peak}, Guo and Schmeits mentioned that this holds for any graph, without requiring regularity. Here, we provide an explicit proof of this result.
	\begin{lema}\label{defpst}
		Let $u$ and $v$ be two vertices of a graph $G$. Then perfect state transfer occurs from $u$ to $v$ at time $\tau$ if and only if $T_\tau(P)\eu=\ev$.
	\end{lema}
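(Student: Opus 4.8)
The plan is to transfer the problem from the arc space $\Cl^{\mathcal{A}}$ to the vertex space $\Cl^{V(G)}$ via the boundary matrix $N$, exploiting the identity $NU^\tau N^*=T_\tau(P)$ of Lemma~\ref{ch11} together with the norm bound of Lemma~\ref{ch} and the spectral bound of Lemma~\ref{evc_P}. First I would record the elementary identity $NN^*=I$: computing $(NN^*)_{uv}$ gives $0$ for $u\neq v$, and for $u=v$ it equals $\frac{1}{\deg u}\,|\{a:t(a)=u\}|=1$ because exactly $\deg u$ arcs have terminus $u$. In particular $\ip{\Phi_u}{\Phi_u}=\eu^*NN^*\eu=1$, so each $\Phi_u=N^*\eu$ is genuinely a state (hence Lemma~\ref{st11} applies) and $N^*$ is injective on vertex vectors, so $\Phi_u\neq\Phi_v$ when $u\neq v$. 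Combining $NN^*=I$ with Lemma~\ref{ch11} yields, for all vertices $u,v$,
\[
\ip{U^\tau\Phi_u}{\Phi_v}=\ev^*\,NU^\tau N^*\,\eu=\ev^*\,T_\tau(P)\,\eu=\big(T_\tau(P)\big)_{vu},
\]
a real number, since $P=D^{-\frac{1}{2}}AD^{-\frac{1}{2}}$ is real symmetric by Lemma~\ref{evc_P} and $T_\tau$ has real coefficients, so $T_\tau(P)$ is real symmetric.

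The converse direction is then immediate. If $T_\tau(P)\eu=\ev$, the displayed identity gives $\ip{U^\tau\Phi_u}{\Phi_v}=\ev^*\ev=1$, so $|\ip{U^\tau\Phi_u}{\Phi_v}|=1$, and Lemma~\ref{st11} delivers perfect state transfer from $u$ to $v$ at time $\tau$.

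For the forward direction, perfect state transfer gives $|\big(T_\tau(P)\big)_{vu}|=1$ by the same identity and Lemma~\ref{st11}. Set $w:=T_\tau(P)\eu$, a real vector with $w_v=\big(T_\tau(P)\big)_{vu}$, so $|w_v|=1$. Since the eigenvalues of $P$ lie in $[-1,1]$ (Lemma~\ref{evc_P}) and $|T_\tau(\mu)|\le1$ there (Lemma~\ref{ch}(i)), the real symmetric matrix $T_\tau(P)$ has all eigenvalues in $[-1,1]$, hence operator norm at most $1$; thus $\|w\|\le\|\eu\|=1$. Combining $\|w\|^2\le1$ with $w_v^2=1$ forces $w_x=0$ for every $x\neq v$ and $w_v=\pm1$, that is, $w=\pm\ev$.

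The remaining — and most delicate — step is to pin down the sign, and this is where the main obstacle lies: the norm argument alone yields only $T_\tau(P)\eu=\pm\ev$, and since the definition of perfect state transfer admits $\gamma=-1$, the specific conclusion ``$=\ev$'' does not follow formally without extra input. To resolve it I would use the Perron eigenvector $D^{\frac{1}{2}}\j$, which by Lemma~\ref{evc_P} is an eigenvector of $P$ for the eigenvalue $1$, hence an eigenvector of $T_\tau(P)$ for $T_\tau(1)=1$. Using self-adjointness of $T_\tau(P)$,
\[
\ip{w}{D^{\frac{1}{2}}\j}=\ip{T_\tau(P)\eu}{D^{\frac{1}{2}}\j}=\ip{\eu}{T_\tau(P)D^{\frac{1}{2}}\j}=\ip{\eu}{D^{\frac{1}{2}}\j}=\sqrt{\deg u}>0,
\]
whereas $\ip{\pm\ev}{D^{\frac{1}{2}}\j}=\pm\sqrt{\deg v}$. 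As $\sqrt{\deg v}>0$, the sign must be $+$, giving $w=\ev$, i.e.\ $T_\tau(P)\eu=\ev$; this incidentally reproves the known necessary condition $\deg u=\deg v$.
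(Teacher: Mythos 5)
Your proof is correct and follows essentially the same route as the paper's: reduce via Lemmas~\ref{st11} and~\ref{ch11} to $\lvert\ip{T_\tau(P)\eu}{\ev}\rvert=1$, use the bound $||T_\tau(P)\eu||\leq 1$ (the eigenvalues of $T_\tau(P)$ lie in $[-1,1]$) to force $T_\tau(P)\eu=\pm\ev$, and pair against the eigenvector $D^{\frac{1}{2}}\j$ for the eigenvalue $1$ to exclude the minus sign. The only cosmetic differences are that the paper runs the sign argument through the idempotent $E_0$ rather than the eigenvector directly, and that you additionally verify $NN^*=I$ so that $\Phi_u$ is indeed a state.
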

	\begin{proof}
		The perfect state transfer occurs from $u$ to $v$ at time $\tau$ if and only if 
		\begin{align*}
			1&=\lvert\ip{U^\tau \Phi_u}{\Phi_v}\rvert \tag*{(by Lemma~\ref{st11})}\\
			&=\lvert\ip{U^\tau N^*\eu}{N^*\ev}\rvert  \\
			&=\lvert\ip{NU^\tau N^*\eu}{\ev}\rvert\\
			&=\lvert\ip{T_\tau(P)\eu}{\ev}\rvert.\tag*{(by Lemma \ref{ch11})}
		\end{align*}
		Note that $||T_\tau(P)\mathbf{e}_w||\leq 1$ for any vertex $w$. In fact, 
		\begin{align*}
			||T_\tau(P)\ew||^2&=\ip{T_\tau(P)\ew}{T_\tau(P)\ew}\\
			&=\ip{\sum_{r=0}^{m} T_\tau(\mu_r) E_r\ew}{\sum_{r=0}^{m} T_\tau(\mu_r) E_r\ew} \\
			&=\sum_{r=0}^{m}T_\tau(\mu_r)^2\ip{E_r\ew}{E_r\ew}\\
			&\leq \sum_{r=0}^{m}\ip{E_r\ew}{E_r\ew}=1.
		\end{align*}
		Since $T_\tau(P)$ is a real matrix and $||T_\tau(P)\mathbf{e}_u||\leq 1$, it follows that $\lvert\ip{T_\tau(P)\eu}{\ev}\rvert=1$ if and only if there exists $\gamma\in\{-1,1\}$ such that $T_\tau(P)\eu=\gamma\ev$.

		Next, we show that $\gamma$ must be equal to $1$. By \eqref{sd}, we have $T_\tau(P)=\sum_{j=0}^{d} T_\tau(\mu_j) E_j$ for any non-negative integer $\tau$.  From $T_\tau(P)\eu=\gamma\ev$  and the spectral decomposition of $P$, it follows that $T_\tau(\mu_0)E_0\eu=\gamma E_0\ev$. By Lemma~\ref{evc_P}, $\mu_0=1$, and so $E_0\eu=\gamma E_0\ev$. Since $1$ is an eigenvalue of $P$ with corresponding eigenvector $D^\frac{1}{2}\j$, we have $E_0 D^\frac{1}{2}\j=D^\frac{1}{2}\j$. Thus,
		$$\sum_{u\in V(G)}\sqrt{\deg u}=\ip{D^\frac{1}{2}\j}{\eu}=\ip{D^\frac{1}{2}\j}{E_0\eu}=\ip{D^\frac{1}{2}\j}{\gamma E_0\ev}=\gamma\ip{D^\frac{1}{2}\j}{\ev}=\gamma \sum_{u\in V(G)}\sqrt{\deg u}.$$
		Hence $\gamma=1$.
	\end{proof}
	The \emph{Schur product} of two matrices $M$  and $N$  of the same size is defined by $(M \circ N)_{a b} = M_{a b} \cdot N_{a b}.$ We denote  by $\mathbf{O}$ the all-zero matrix.
	\begin{theorem}\label{association1}
		Let $G$ be a graph belonging to an association scheme $\mathcal{A}$. Then perfect state transfer occurs from  vertex $u$ to another vertex $v$ on $G$ at time $\tau$ if and only if there exists $B\in\mathcal{A}$ such that $T_\tau(P)=B$, where $B$ is a permutation matrix of order $2$ with no fixed points and satisfies $B_{uv}=1$.
	\end{theorem}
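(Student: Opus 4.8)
The plan is to combine Lemma~\ref{defpst} with the spectral structure of the Bose--Mesner algebra. The key observation is that $T_\tau(P)$ is always a real symmetric matrix, and when $G$ belongs to the scheme $\mathcal{A}$ it is a polynomial in $P=\frac{1}{k}A$ (by Lemma~\ref{reg}, since such $G$ is $k$-regular), hence $T_\tau(P)\in\Ca$. First I would establish the forward direction: assume perfect state transfer occurs from $u$ to $v$ at time $\tau$, so by Lemma~\ref{defpst} we have $T_\tau(P)\eu=\ev$. Setting $B:=T_\tau(P)$, I want to show $B$ is a permutation matrix of order $2$ lying in $\mathcal{A}$ with $B_{uv}=1$ and no fixed points.

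The crux is upgrading the single-column identity $B\eu=\ev$ to a statement about the whole matrix. The mechanism is the algebraic homogeneity of the scheme: every matrix in $\Ca$ is a linear combination of the classes $A_0,\dots,A_d$, and within each class all nonzero entries are equal (value $1$) and the pattern is ``vertex-transitive'' in the sense that the $(x,y)$ entry depends only on the relation between $x$ and $y$. I would write $B=\sum_{i=0}^d \beta_i A_i$ and read off the coefficients from the column $B\eu=\ev$: since $(B)_{xu}=\beta_{i(x,u)}$ where $i(x,u)$ is the relation index of the pair $(x,u)$, the equation forces $\beta_i\in\{0,1\}$, with exactly the coefficient of the relation connecting $u$ to $v$ equal to $1$ and all others $0$ (using that the relations partition the pairs). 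This shows $B=A_i$ for a single class, i.e. $B\in\mathcal{A}$ is a $(0,1)$-matrix, hence a permutation matrix once I also use $\|B\eu\|=\|\ev\|=1$ together with $B$ being symmetric. Symmetry gives $B=B^*=B^\top$, and combined with $B\eu=\ev$ and $B$ being a $0/1$ matrix with constant row sums, I get $B^2=I$, so $B$ has order dividing $2$; the no-fixed-point condition $B_{xx}=0$ follows because $A_0=I$ cannot be the chosen class (otherwise $u=v$, contradicting distinctness of the states). Finally $B_{uv}=(B\eu)_v=(\ev)_v=1$.

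For the converse, suppose such a $B\in\mathcal{A}$ exists with $T_\tau(P)=B$. Then reading the $u$-th column gives $B\eu$, and since $B$ is a permutation matrix of order $2$ with $B_{uv}=1$, the $u$-th column of $B$ is exactly $\ev$; that is, $T_\tau(P)\eu=\ev$, which by Lemma~\ref{defpst} is precisely the occurrence of perfect state transfer from $u$ to $v$ at time $\tau$. This direction is essentially immediate once the matrix identity is in hand.

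I expect the main obstacle to be the forward direction's passage from the column equation to the full matrix identity $T_\tau(P)=B\in\mathcal{A}$. The subtlety is justifying that $B$ has only $0/1$ entries and coincides with a single scheme class: one must use that $B=T_\tau(P)$ is a polynomial in $P$ (hence automatically in $\Ca$), expand it in the idempotent basis via \eqref{sd}, and invoke Lemma~\ref{ch}(i) to control the values $T_\tau(\mu_j)\in[-1,1]$. The cleanest route is probably to argue via the norm computation already appearing in the proof of Lemma~\ref{defpst}: the chain of inequalities there is tight exactly when $T_\tau(\mu_r)^2\ip{E_r\eu}{E_r\eu}$ saturates, forcing $T_\tau(\mu_r)\in\{-1,1\}$ on the relevant support and pinning down $B$ as a $\pm1$-entried matrix; combining this with membership in $\Ca$ (whose $(0,1)$-basis is $\mathcal{A}$) and the sign argument from Lemma~\ref{defpst} that eliminates $\gamma=-1$ yields $B\in\mathcal{A}$.
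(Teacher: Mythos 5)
Your proof is correct, and the overall skeleton matches the paper's: reduce to $T_\tau(P)\eu=\ev$ via Lemma~\ref{defpst}, note $T_\tau(P)\in\Ca$ because it is a polynomial in $P=\frac{1}{k}A$, expand in the class basis, and then derive the permutation/order-$2$/fixed-point-free properties from symmetry and constant row sums. Where you diverge is the central step of upgrading the column identity to the full matrix identity $T_\tau(P)=A_\ell$. The paper gets there in two stages: it first uses the bound $\lVert T_\tau(P)\ew\rVert\leq 1$ to show the $u$-th row of $T_\tau(P)$ is $\ev^t$, compares $uv$-entries to get $\alpha_\ell=1$, concludes $A_\ell$ is a permutation matrix, and then uses the Schur product $T_\tau(P)\circ A_\ell=A_\ell$ together with the norm bound again to force every row of $T_\tau(P)$ to be a standard basis vector. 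Your route is more direct: since the entries of the $u$-th column of $T_\tau(P)$ are exactly the coefficients $\beta_{i(x,u)}$, and that column equals $\ev$, all coefficients are pinned down at once, giving $T_\tau(P)=A_{i(v,u)}$ in a single step with no need for the norm bound or the Schur product in this part of the argument. This is a genuine (if modest) simplification; the one hypothesis you use silently is that every class $A_i$ actually appears in column $u$, i.e.\ has positive valency $v_i\geq 1$ --- true because each $A_i$ is a nonzero $(0,1)$-matrix with constant column sums, but worth one sentence. Your closing paragraph proposing a fallback via tightness of the norm inequality is unnecessary (and is essentially the paper's mechanism); the argument in your second paragraph already closes the forward direction. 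The converse is handled identically in both proofs.
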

	\begin{proof}
		Let $\mathcal{A}=\{A_0,\hdots,A_d\}$ and $A$ be the adjacency matrix of $G$. Since $T_\tau(P)$ is a polynomial of $A$ for each non-negative integer $\tau$, $T_\tau(P)$ belongs to the  Bose-Mesner algebra of the scheme. Thus we have
		\begin{equation}\label{asso}
			T_\tau(P)=\sum_{i=0}^{d}\alpha_iA_i,
		\end{equation}
		where $\alpha_i\in\Cl$. By Lemma~\ref{defpst}, if perfect state transfer occurs from vertex $u$ to vertex $v$ at time $\tau$, then $T_\tau(P)\eu=\ev$. Therefore $uv$-th entry of $T_\tau(P)$ is $1$. Note that $||T_\tau(P)\mathbf{e}_w||\leq 1$ for any vertex $w$. This implies that the $u$-th row $T_\tau(P)$ is $\ev^t$.

		Consider the class $A_\ell$ of $\mathcal{A}$ such that the $uv$-th entry of $A_\ell$ is $1$. Then for $i\neq \ell$, the $uv$-th entry of $A_i$ is $0$. Therefore comparison of $uv$-th entry of both sides of \eqref{asso} gives $\alpha_\ell=1$.   Now the facts that the $u$-th row $T_\tau(P)$ is $\ev^t$ and $\alpha_\ell=1$ altogether imply that the $u$-th row of $A_\ell$ is also $\ev^t$. Since any matrix in the Bose-Mesner algebra has constant row sum and column sum, $A_\ell$ is a permutation matrix.  For $0\leq i, j\leq d$, it follows from the definition of association scheme that
		\[ 	A_i \circ A_j =
		\begin{cases}
			A_i & \text{if } i = j \\
			0 & \text{otherwise}.
		\end{cases} 	\]
		Thus we have $T_\tau(P)\circ A_\ell=A_\ell$. This implies that if the $xy$-th entry of $A_\ell$ is $1$, then the $xy$-th entry of $T_\tau(P)$ is also $1$. Now using the fact $||T_\tau(P)\mathbf{e}_w||\leq 1$ for any vertex $w$, we  conclude that $x$-th row of $T_\tau(P)$ is $\mathbf{e}_y^t$. Thus $T_\tau(P)=A_\ell$.  Since $A_\ell$ is a symmetric permutation matrix and $A_\ell\neq A_0$, we conclude that $A_\ell$ is a permutation matrix of order $2$ with no fixed points.
		
		The converse  follows directly from Lemma~\ref{defpst}.		
	\end{proof}
	The following result gives the minimum time at which perfect state transfer can occur on graphs in an association scheme and also provides a useful necessary condition for perfect state transfer.
	\begin{lema}\label{period_ass}
		Let $G$ be a graph belonging to an association scheme. If $G$ exhibits perfect state transfer at minimum time $\tau$, then $G$ is $2\tau$-periodic.
	\end{lema}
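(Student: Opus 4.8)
The plan is to prove the two halves separately: first that $U^{2\tau}=I$, so that $2\tau$ is \emph{a} period, and then that $2\tau$ is the \emph{smallest} period.

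For the first half I would start from Theorem~\ref{association1}: since $G$ lies in the association scheme $\mathcal{A}$ and exhibits perfect state transfer from some $u$ to some $v$ at time $\tau$, there is $B\in\mathcal{A}$ with $T_\tau(P)=B$, where $B$ is a symmetric permutation matrix of order $2$; in particular $B^2=I$. Using the Chebyshev composition identity $T_{2\tau}(x)=T_2(T_\tau(x))=2\,T_\tau(x)^2-1$, I obtain $T_{2\tau}(P)=2B^2-I=I$. Feeding this into the spectral decomposition \eqref{sd} forces $T_{2\tau}(\mu_j)=1$ for every eigenvalue $\mu_j$ of $P$. Writing $\mu_j=\cos\theta_j$ and using $T_{2\tau}(\cos\theta_j)=\cos(2\tau\theta_j)$, the equation $\cos(2\tau\theta_j)=1$ gives $\theta_j\in\frac{\pi}{\tau}\Zl$, so the eigenvalues $e^{\pm\iu\theta_j}$ of $U$ listed in Theorem~\ref{evu_grover} are $2\tau$-th roots of unity. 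The only other eigenvalues of $U$ are $\pm1$, which are $2\tau$-th roots of unity because $2\tau$ is even. Since $U$ is unitary, hence diagonalizable, having every eigenvalue a $2\tau$-th root of unity yields $U^{2\tau}=I$; consequently $G$ is periodic and its period $p$ divides $2\tau$.

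For minimality I would rule out $p<2\tau$. First, $p$ cannot divide $\tau$: otherwise $U^\tau=I$, and then Lemma~\ref{ch11} gives $T_\tau(P)=NU^\tau N^*=NN^*=I$, contradicting $T_\tau(P)=B\neq I$. So write $\tau=qp+r$ with $0<r<p$; then $U^\tau=U^r$, whence perfect state transfer from $u$ to $v$ also occurs at time $r$, and minimality of $\tau$ forces $r\geq\tau$. On the other hand, $U^{2\tau}=I$ together with $U^\tau=U^r$ gives $U^{2r}=I$, so $p\mid 2r$; since $0<2r<2p$ this forces $2r=p$, i.e.\ $r=p/2$. Combining $r\geq\tau$ with $p\leq 2\tau$ (hence $r=p/2\leq\tau$) yields $r=\tau$ and $p=2\tau$, as required.

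The conceptual heart is the passage $T_{2\tau}(P)=I\Rightarrow U^{2\tau}=I$ in the first half. This step cannot be shortcut by merely noting $NU^{2\tau}N^*=T_{2\tau}(P)=I=NN^*$, because $N$ is not invertible and that identity only controls the vertex-projected dynamics; one genuinely needs the explicit eigenvalue description of $U$ in Theorem~\ref{evu_grover} to ensure that \emph{all} eigenvalues of $U$—including the $\pm1$ eigenvalues invisible to $P$—are $2\tau$-th roots of unity. Once $U^{2\tau}=I$ is in hand, the minimality argument is a short numerical manipulation whose only nontrivial input is the observation $p\nmid\tau$, itself forced by $B\neq I$.
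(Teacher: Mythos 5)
Your proof is correct and follows essentially the same route as the paper's: both show every discriminant eigenvalue has the form $\cos\frac{m\pi}{\tau}$ (you via $T_{2\tau}(P)=2B^2-I=I$, the paper via $T_\tau(\mu)=\pm1$ and Lemma~\ref{ch}), invoke Theorem~\ref{evu_grover} to get $U^{2\tau}=I$, and finish with a divisibility argument. Your justification that the period cannot divide $\tau$ (via $NU^\tau N^*=NN^*=I\neq B$) is actually spelled out more carefully than in the paper, which merely asserts $\tau\in\{1,\hdots,k-1\}$.
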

	\begin{proof}
		Suppose $G$ exhibits perfect state transfer at minimum time $\tau$. Let $\mu\in\spec_P(G)$. By Theorem~\ref{association1}, we have $T_\tau(\mu)=\pm1$. Then by Lemma~\ref{ch}, $\mu=\cos \frac{m}{\tau}\pi$ for some positive integer $m$. Consequently,  Theorem~\ref{evu_grover} gives that $U^{2\tau}=I$.  Thus $G$ is periodic, and suppose it is $k$-periodic. Then $\tau\in\{1,\hdots,k-1\}$ and $k$ is the least positive integer such that $U^k=I$.  This implies that $k\divides2\tau$ and hence $k=2\tau$. 
	\end{proof}
	The following result is an easy consequence of Lemma~\ref{period_ass}.
	\begin{prop}
		Let $G$ be a graph belonging to an association scheme. If $G$ exhibits perfect state transfer from $u$ to $v$ as well as from $u$ to $w$, then $v=w$.
	\end{prop}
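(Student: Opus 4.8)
The plan is to prove the sharper statement that \emph{every} time at which $G$ exhibits perfect state transfer realizes one and the same permutation matrix; the proposition then follows at once, since both transfers out of $u$ must send $u$ to the vertex determined by that matrix. I would begin by letting $\tau$ be the minimum time at which $G$ exhibits perfect state transfer. By Lemma~\ref{period_ass}, $G$ is $2\tau$-periodic, so $U^{2\tau}=I$. Combining Lemma~\ref{defpst} with Theorem~\ref{association1}, the matrix $B:=T_\tau(P)$ is a permutation matrix of order $2$ with no fixed points; in particular $B^2=I$, so each eigenvalue $T_\tau(\mu_j)$ of $B$ lies in $\{-1,1\}$. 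Writing $\mu_j=\cos\theta_j$ with $\theta_j\in[0,\pi]$, Lemma~\ref{ch} then forces $\tau\theta_j\in\pi\Zl$ for every $j$, say $\tau\theta_j=k_j\pi$.

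Next I would record two symmetries of the sequence $k\mapsto T_k(P)$. Since $U^{2\tau}=I$, Lemma~\ref{ch11} gives $T_{k+2\tau}(P)=NU^{k+2\tau}N^*=NU^kN^*=T_k(P)$ for all $k\ge 0$. Moreover, because $2\tau\theta_j=2k_j\pi$, we have $\cos\big((2\tau-k)\theta_j\big)=\cos(k\theta_j)$ for each $j$, so by the spectral decomposition \eqref{sd} the reflection identity $T_{2\tau-k}(P)=T_k(P)$ holds for all $k$.

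With these in hand, the heart of the argument is the claim that any perfect state transfer time $t$ satisfies $T_t(P)=B$. Reducing $t$ modulo $2\tau$ and using periodicity, I may replace $t$ by some $t'\in\{1,\dots,2\tau-1\}$ with $T_{t'}(P)=T_t(P)$, noting that $t'=0$ is excluded since $T_0(P)=I$ has fixed points and hence is not a transfer matrix. If $t'>\tau$, the reflection identity gives $T_{2\tau-t'}(P)=T_{t'}(P)$ with $0<2\tau-t'<\tau$, producing perfect state transfer strictly before time $\tau$ and contradicting minimality; hence $t'\le\tau$, and minimality forces $t'=\tau$, so $T_t(P)=B$. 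Applying this to the hypotheses, if perfect state transfer occurs from $u$ to $v$ at time $\tau_1$ and from $u$ to $w$ at time $\tau_2$, then Lemma~\ref{defpst} yields $\ev=T_{\tau_1}(P)\eu=B\eu=T_{\tau_2}(P)\eu=\ew$, whence $v=w$.

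I expect the main obstacle to be this last claim, specifically ruling out transfer matrices other than $B$ at larger times. The subtle point is that the eigenvalue condition $T_t(\mu_j)=\pm1$ is only \emph{necessary} for transfer, so the argument cannot be run purely on eigenvalues; it must stay at the level of the genuine permutation matrices $T_t(P)$ and use the periodicity and reflection identities to fold any transfer time back into the interval $(0,\tau]$, where the minimality of $\tau$ can be invoked.
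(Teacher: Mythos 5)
Your proof is correct and follows essentially the same route as the paper: Lemma~\ref{period_ass} combined with Lemma~\ref{defpst} and Theorem~\ref{association1}, concluding $\ev=T_\tau(P)\eu=\ew$. The paper's own proof is a two-line assertion that both transfers occur at the same time; your folding argument (the periodicity identity $T_{k+2\tau}(P)=T_k(P)$ together with the reflection $T_{2\tau-k}(P)=T_k(P)$, which pushes every perfect state transfer time back to the minimum time $\tau$ and shows all transfer times realize the single permutation matrix $T_\tau(P)$) is precisely the justification the paper leaves implicit, so your write-up is, if anything, more complete than the original.
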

	\begin{proof}
		By Lemma~\ref{period_ass}, if $G$ exhibits perfect state transfer from $u$ to $v$ at time $\tau$, then perfect state transfer also occurs from $u$ to $w$ at time $\tau$.	Then by Lemma~\ref{defpst}, it follows that $\ev=T_\tau(P)\eu=\ew$. This completes the proof.
	\end{proof}
	Consider an association scheme $\mathcal{A}$ with $d$ classes that contains a permutation matrix $B$ of order $2$ with no fixed points. Let $\{E_0, \dots, E_d\}$ be the set of principal idempotents of the scheme $\mathcal{A}$. Since $B$ has eigenvalues $\pm1$, it follows that $B E_j = \pm E_j$ for $j \in \{0, \dots, d\}.$ Define a partition $(\mathcal{I}_B^+,\mathcal{I}_B^-)$ of $\{0,\hdots,d\}$ where $j\in\mathcal{I}_B^+$ if $BE_j=E_j$ and $j\in\mathcal{I}_B^-$ if $BE_j=-E_j$.
	\begin{theorem}\label{association2}
		Let $G$ be a graph belonging to an association scheme $\mathcal{A}$ with $d$ classes. Let $P$ be the discriminant of $G$ with distinct eigenvalues $\mu_0,\hdots,\mu_d$ such that $\mu_0>\cdots>\mu_d$. Then perfect state transfer occurs from $u$ to $v$ on $G$ at time $\tau$ if and only if the following two conditions are satisfied.
		\begin{enumerate}
			\item[(i)]  The scheme contains a class $B$ such that $B$ is a permutation matrix of order $2$ with no fixed points and $B_{uv}=1$.
			\item[(ii)] $T_\tau(\mu_j)=1$ for each $j\in \mathcal{I}_B^+$ and $T_\tau(\mu_j)=-1$ for each $j\in \mathcal{I}_B^-$.
		\end{enumerate}
	\end{theorem}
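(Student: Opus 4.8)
The plan is to reduce everything to Theorem~\ref{association1}, which already characterizes perfect state transfer from $u$ to $v$ at time $\tau$ by the single matrix identity $T_\tau(P)=B$, where $B$ is a class of $\mathcal{A}$ that is a fixed-point-free permutation matrix of order $2$ with $B_{uv}=1$. The whole content of Theorem~\ref{association2} is then to translate this matrix equation into the eigenvalue conditions in (ii). The key preliminary step is to identify the idempotents appearing in the spectral decomposition~\eqref{sd1} of $P$ with the principal idempotents $E_0,\hdots,E_d$ of the scheme. Since $G$ is regular, $P=\frac1k A$ lies in $\Ca$, and by hypothesis $P$ has exactly $d+1$ distinct eigenvalues $\mu_0>\cdots>\mu_d$; hence Lemma~\ref{sd4} applies and guarantees that the projections $E_j$ in~\eqref{sd1} are precisely the principal idempotents from Theorem~\ref{sd3}. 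This is what makes the partition $(\mathcal{I}_B^+,\mathcal{I}_B^-)$ interact correctly with the decomposition of $P$.

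For the forward direction, I would assume perfect state transfer occurs from $u$ to $v$ at time $\tau$. Theorem~\ref{association1} immediately yields a class $B$ that is a fixed-point-free permutation matrix of order $2$ with $B_{uv}=1$ and $T_\tau(P)=B$; this is exactly condition (i). To extract (ii), I would expand both sides of $T_\tau(P)=B$ in the basis $\{E_0,\hdots,E_d\}$. On one side, \eqref{sd} gives $T_\tau(P)=\sum_{j=0}^d T_\tau(\mu_j)E_j$. On the other side, $B\in\Ca=\operatorname{span}\{E_0,\hdots,E_d\}$, and because every class is symmetric with $B^2=I$, the scalars $\lambda_j$ in $B=\sum_{j=0}^d\lambda_j E_j$ satisfy $\lambda_j\in\{-1,+1\}$, i.e.\ $BE_j=\lambda_jE_j$. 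By the very definition of the partition, $\lambda_j=1$ exactly when $j\in\mathcal{I}_B^+$ and $\lambda_j=-1$ exactly when $j\in\mathcal{I}_B^-$, so $B=\sum_{j\in\mathcal{I}_B^+}E_j-\sum_{j\in\mathcal{I}_B^-}E_j$. Comparing coefficients in the two expansions, which is legitimate since the idempotents are linearly independent (property (ii) of Theorem~\ref{sd3}), forces $T_\tau(\mu_j)=1$ for $j\in\mathcal{I}_B^+$ and $T_\tau(\mu_j)=-1$ for $j\in\mathcal{I}_B^-$, which is condition (ii).

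For the converse, I would simply run this computation backwards: assuming (i) and (ii), the expansions $T_\tau(P)=\sum_{j}T_\tau(\mu_j)E_j$ and $B=\sum_{j\in\mathcal{I}_B^+}E_j-\sum_{j\in\mathcal{I}_B^-}E_j$ coincide term by term, so $T_\tau(P)=B$, and the converse half of Theorem~\ref{association1} (itself a direct application of Lemma~\ref{defpst}) delivers perfect state transfer from $u$ to $v$ at time $\tau$. The main obstacle is not any computation but the structural point in the first paragraph: one must verify carefully that the spectral idempotents of $P$ are the scheme idempotents, so that the two distinct uses of the symbol $E_j$ (spectral decomposition versus principal idempotents of $\mathcal{A}$) genuinely agree; this is exactly where the hypothesis of $d+1$ distinct eigenvalues and Lemma~\ref{sd4} are essential. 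The remaining subtlety, that each $BE_j$ equals $\pm E_j$ rather than a more general combination, follows from symmetry of the classes together with $B^2=I$, and it is precisely what makes the partition $(\mathcal{I}_B^+,\mathcal{I}_B^-)$ well defined.
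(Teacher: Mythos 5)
Your proposal is correct and follows essentially the same route as the paper: both reduce to Theorem~\ref{association1}, expand $T_\tau(P)$ and $B$ in the basis of principal idempotents, and compare coefficients. The only difference is that you spell out explicitly the identification of the spectral idempotents of $P$ with the scheme idempotents (via Lemma~\ref{sd4}) and the reason $BE_j=\pm E_j$, both of which the paper handles in the discussion preceding the theorem rather than inside the proof.
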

	\begin{proof}
		By Theorem~\ref{association1}, Condition (i) is necessary for the occurrence of perfect state transfer from $u$ to $v$ on $G$ at time $\tau$. Suppose Condition (i) holds. It is enough to prove that perfect state transfer occurs from $u$ to $v$ on $G$ at time $\tau$ if and only if Condition (ii) holds. Note that 
		$$T_\tau(P)=\sum_{j\in\mathcal{I}_B^+}T_\tau(\mu_j)E_j+\sum_{j\in\mathcal{I}_B^-}T_\tau(\mu_j)E_j$$
		and
		$$B=\sum_{j\in\mathcal{I}_B^+}E_j+\sum_{j\in\mathcal{I}_B^-}(-1)E_j.$$
		Since $T_\tau(P)$ and $B$ are elements of $\Cl[\mathcal{A}]$ and $\{E_0,\hdots,E_d\}$ forms a basis of $\Cl[\mathcal{A}]$, we have $T_\tau(P)=B$ if and only if  $T_\tau(\mu_j)=1$ for each $j\in \mathcal{I}_B^+$ and $T_\tau(\mu_j)=-1$ for each $j\in \mathcal{I}_B^-$. Therefore, the result follows from Theorem~\ref{association1}.
	\end{proof}
	We now apply the preceding results to two important families of association schemes, namely the Hamming scheme and the Johnson scheme.
	\subsection{Hamming Scheme}
	Let $d$ and $q$ be two positive integers, and $F$ be a set of $q$ elements. For $i\in\{1,\hdots, d\}$, define the graph $H(d,q,i)$ with vertex set $F^d$, and two vertices $x:=(x_1,\hdots,x_d)$ and $y:=(y_1,\hdots,y_d)$ are adjacent if the number of positions in which they differ is exactly $i$, that is, $|\{j:x_j\neq y_j,1\leq j\leq d\}|=i$. Let $A_0=I$ and for $i\in\{1,\hdots,d\}$, let $A_i$ be the adjacency matrix of the graph $H(d,q,i)$. It is easy to verify that the set $\mathcal{A}:=\{A_0,\hdots, A_d\}$ is an association scheme with $d$ classes. This scheme, denoted $H(d,q)$, is known as the \emph{Hamming scheme}. Observe that the graph $H(d,2,1)$ is the well-known $d$-dimensional hypercube.

	For $i\in\{1,\hdots,d\}$, the graph $H(d,q,i)$ is a regular graph of order $q^d$  and valency $(q-1)^i\binom{d}{i}$. The next result provides a necessary condition for the occurrence of perfect state transfer on graphs in the Hamming scheme.
	\begin{lema}\label{q=2}
		Let $G$ be a graph belonging to the Hamming scheme $H(d,q)$. If $G$ exhibits perfect state transfer, then $q=2$. Moreover, in $H(d,2)$, the class $A_d$ is a permutation matrix of order $2$ with no fixed points.
	\end{lema}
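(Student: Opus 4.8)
The plan is to exploit Theorem~\ref{association1}, which tells us that perfect state transfer on a graph in an association scheme forces one of the scheme's classes to be a permutation matrix of order $2$ with no fixed points. So the entire argument reduces to determining which classes $A_i$ of the Hamming scheme $H(d,q)$ can possibly be permutation matrices, and the valency formula stated just before the lemma does all the work.

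First I would recall that a permutation matrix has every row sum equal to $1$. Since the class $A_i$ is the adjacency matrix of $H(d,q,i)$, its constant row sum is exactly its valency $(q-1)^i\binom{d}{i}$. The class $B$ produced by Theorem~\ref{association1} has order $2$, so $B\neq A_0=I$; writing $B=A_\ell$ we therefore have $\ell\geq 1$, and being a permutation matrix forces $(q-1)^\ell\binom{d}{\ell}=1$.

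Next I would analyze this Diophantine condition. Since both factors are positive integers, the equation forces $(q-1)^\ell=1$ and $\binom{d}{\ell}=1$ simultaneously. As $\ell\geq 1$, the binomial condition $\binom{d}{\ell}=1$ gives $\ell=d$ (for $1\leq\ell\leq d-1$ one has $\binom{d}{\ell}\geq\binom{d}{1}=d\geq 2$), and then $(q-1)^d=1$ with $d\geq 1$ forces $q-1=1$, i.e.\ $q=2$. This establishes the first claim, that perfect state transfer can occur only when $q=2$, and moreover pins down $\ell=d$ as the only candidate class.

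Finally, for the second claim I would verify directly that $A_d$ in $H(d,2)$ has the required structure. Taking the vertex set to be $F^d$ with $|F|=2$, two vertices lie at Hamming distance $d$ precisely when they differ in every coordinate, so each vertex $x$ has a unique partner $\bar{x}$ obtained by flipping all coordinates. Hence $A_d$ is the permutation matrix realizing the involution $x\mapsto\bar{x}$; this map squares to the identity and has no fixed point, since $x$ and $\bar{x}$ differ in all $d\geq 1$ positions. Thus $A_d$ is a permutation matrix of order $2$ with no fixed points. I do not expect a genuine obstacle here; the only point requiring care is to observe that the valency formula, combined with the permutation-matrix constraint, leaves $A_d$ (with $q=2$) as the sole possibility.
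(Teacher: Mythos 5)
Your proposal is correct and follows essentially the same route as the paper: invoke Theorem~\ref{association1} to force some class $A_\ell$ with $\ell\geq 1$ to be a permutation matrix, use the valency formula to get $(q-1)^\ell\binom{d}{\ell}=1$, and conclude $q=2$ and $\ell=d$. Your explicit verification that $A_d$ realizes the coordinate-flipping involution is a small addition the paper leaves implicit, but the argument is the same.
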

	\begin{proof}
		If perfect state transfer occurs on $G$, then by Theorem~\ref{association1}, the scheme contains a permutation matrix $B$ of order 2 with no fixed points. Hence, there exists an index $i \in \{1, \ldots, d\}$ such that $A_i = B$. This implies that the graph $H(d,q,i)$ is regular of degree 1, and therefore
		$$(q-1)^i\binom{d}{i} = 1.$$
		This equation yields $(q-1)^i = 1 = \binom{d}{i}$. Since $i\geq 1$, we find that $q=2$ and $i=d$.
	\end{proof}
	For a non-negative integer $k$, $\binom{x}{k}$ is the polynomial defined by 
	$$\binom{x}{k}= \left\{ \begin{array}{ll}
		\frac{x(x-1)\cdots (x-k+1)}{k!} &\mbox{ if }
		k\neq 0 \\ 
		1 & \mbox{ if } k=0.
	\end{array}\right.$$ 
	For positive integers $d$, $q$, and $k\in\{0,\hdots,d\}$,  the Krawtchouk polynomial (see \cite[p. 209]{algcomb}) is defined by
	$$K_k(x)=K_k(x;d,q)=\sum_{\ell=0}^{k}(-1)^\ell (q-1)^{k-\ell} \binom{x}{\ell}\binom{d-x}{k-\ell}.$$
	For $i\in\{0,\hdots,d\}$, the eigenvalues (see \cite[Theorem~2.3]{algcomb}) of $A_i$ are given by 
	\begin{equation}\label{evcham}
		\lambda_j^i=K_i(j;d,q)~\text{for $j\in\{0,\hdots,d\}$}.
	\end{equation}
	We now provide a necessary and sufficient condition for the occurrence of perfect state transfer on a graph in the Hamming scheme $H(d,2)$.
	\begin{theorem}\label{pst_hamming}
		Let $G$ be a graph belonging to the Hamming scheme $H(d,2)$.  Let the distinct eigenvalues of the discriminant $P$ of $G$ be $\mu_0,\hdots,\mu_d$ such that $\mu_0>\cdots>\mu_d$. Then $G$ exhibits perfect state transfer at time $\tau$ if and only if 
		$T_\tau(\mu_j)=1$ if $j$ is even and $T_\tau(\mu_j)=-1$ if $j$ is odd.
	\end{theorem}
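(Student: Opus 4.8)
The plan is to apply Theorem~\ref{association2} to the Hamming scheme $H(d,2)$, reducing the general necessary-and-sufficient condition to the concrete parity condition stated here. The setup work has already been done: Lemma~\ref{q=2} tells us that on $H(d,2)$ the only candidate for the permutation matrix $B$ of order $2$ with no fixed points is the class $A_d$. So Condition~(i) of Theorem~\ref{association2} is automatically available (with $B=A_d$), and the entire content of the theorem reduces to identifying the partition $(\mathcal{I}_B^+,\mathcal{I}_B^-)$ of $\{0,\dots,d\}$ for this particular $B=A_d$ and showing it coincides with the partition into even and odd indices.

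**First I would** pin down the action of $A_d$ on the principal idempotents $E_j$. By Theorem~\ref{sd3}(iii) we have $A_dE_j=p_d(j)E_j$, so $A_d$ acts on the $j$-th eigenspace by the scalar $p_d(j)=\lambda_j^d=K_d(j;d,2)$ from \eqref{evcham}. Since $A_d$ is a permutation matrix of order $2$, these eigenvalues are $\pm1$, and by definition $j\in\mathcal{I}_{A_d}^+$ iff $K_d(j;d,2)=1$ while $j\in\mathcal{I}_{A_d}^-$ iff $K_d(j;d,2)=-1$. The key computational step is therefore to evaluate the top Krawtchouk polynomial at $q=2$:
\begin{equation*}
K_d(j;d,2)=\sum_{\ell=0}^{d}(-1)^\ell\binom{j}{\ell}\binom{d-j}{d-\ell}.
\end{equation*}
For $q=2$ the only surviving term in the sum is $\ell=j$ (since $\binom{d-j}{d-\ell}$ forces $d-\ell\le d-j$, i.e. $\ell\ge j$, while $\binom{j}{\ell}$ forces $\ell\le j$), giving $K_d(j;d,2)=(-1)^j$. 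Hence $j\in\mathcal{I}_{A_d}^+$ exactly when $j$ is even and $j\in\mathcal{I}_{A_d}^-$ exactly when $j$ is odd.

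**The main obstacle**—really the only nonroutine point—is the clean evaluation $K_d(j;d,2)=(-1)^j$; once that is in hand, everything else is bookkeeping. A cleaner alternative that avoids manipulating the Krawtchouk sum directly is to recall that $H(d,2)$ is the $d$-cube scheme, whose principal idempotents are indexed by Hamming weight via the characters of $\mathbb{Z}_2^d$, and that $A_d=J'$ is the antipodal map $x\mapsto x+\mathbf{1}$; the character $\chi_S$ of weight-$j$ subset $S$ satisfies $\chi_S(\mathbf{1})=(-1)^{|S|}=(-1)^j$, giving the eigenvalue $(-1)^j$ immediately. **I would** present whichever derivation is shortest and then conclude: substituting this partition into Condition~(ii) of Theorem~\ref{association2}, perfect state transfer occurs from $u$ to $v$ (where $A_d$ has a $1$ in position $uv$, i.e. $v$ is the antipode of $u$) at time $\tau$ if and only if $T_\tau(\mu_j)=1$ for all even $j$ and $T_\tau(\mu_j)=-1$ for all odd $j$, which is exactly the claimed statement. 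Since $A_d$ is a fixed-point-free involution, every vertex $u$ has a unique partner $v$, so the condition is genuinely a statement about the graph $G$ exhibiting perfect state transfer at time $\tau$.
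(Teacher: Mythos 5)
Your proposal is correct and follows essentially the same route as the paper: invoke Lemma~\ref{q=2} to identify $B=A_d$, compute $\lambda_j^d=K_d(j;d,2)=(-1)^j$ by observing that only the $\ell=j$ term of the Krawtchouk sum survives, deduce $\mathcal{I}_{A_d}^{\pm}$ is the even/odd partition, and conclude via Theorem~\ref{association2}. The character-theoretic alternative you sketch is a nice shortcut but does not change the substance of the argument.
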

	\begin{proof}
		Let $\{E_0,\hdots,E_d\}$ be the set of all principal idempotents of the scheme $H(d,2)$. By Lemma~\ref{q=2}, the class $A_d$ is a permutation matrix of order $2$ with no fixed points. From Theorem~\ref{sd3}, it follows that $A_dE_j=\lambda_j^dE_j$ for $j\in\{0,\hdots,d\}$. From~\eqref{evcham}, we have
		$$\lambda_j^d=\sum_{\ell=0}^{d}(-1)^\ell \binom{j}{\ell}\binom{d-j}{d-\ell}.$$
		Note that $\binom{j}{\ell}\binom{d-j}{d-\ell}\neq 0$ only for $\ell=j$. Thus  $\lambda_j^d=(-1)^j$ for $j\in\{0,\hdots,d\}$. Consequently, $A_dE_j=(-1)^jE_j$ for $j\in\{0,\hdots,d\}$. This implies that $j\in \mathcal{I}_{A_d}^+$ if $j$ is even and $j\in \mathcal{I}_{A_d}^-$ if $j$ is odd. Therefore by Theorem~\ref{association2}, the result follows.
	\end{proof}
	
	We use an alternative expression for the Krawtchouk polynomials to prove our next results. From \cite[Chapter 5, Theorem 15]{correcting}, we have
	$$K_k(x;d,2)=\sum_{\ell=0}^{k}(-2)^\ell\binom{d-\ell}{k-\ell}\binom{x}{\ell}~\text{for $k\in\{0,\hdots,d\}$}.$$
	Note that $H(d,2,i)$ is a $\binom{d}{i}$-regular graph for $i\in\{1,\hdots,d\}$. Thus for $i\in\{1,\hdots,d\}$, the eigenvalues of the discriminant $P$ of $H(d,2,i)$ are given by 
	\begin{equation*}\label{ev_hamming}
		\mu_j^i=\frac{1}{\binom{d}{i}}\sum_{\ell=0}^{i}(-2)^\ell\binom{d-\ell}{i-\ell}\binom{j}{\ell}~\text{for $j\in\{0,\hdots,d\}$}.
	\end{equation*}
	In some special cases, we calculate the previous expression explicitly as follows. 
	\begin{align}
		\mu_0^i&=1 \label{mufirst1}\\
		\mu_1^i&=1-\frac{2i}{d}\label{mufirst}\\
		\mu_2^i&=1-\frac{4i}{d}+\frac{4i(i-1)}{d(d-1)}\\
		\mu_3^i&=1-\frac{6i}{d}+\frac{12i(i-1)}{d(d-1)}-\frac{8i(i-1)(i-2)}{d(d-1)(d-2)}\\
		\mu_4^i&=1-\frac{8i}{d}+\frac{24i(i-1)}{d(d-1)}-\frac{32i(i-1)(i-2)}{d(d-1)(d-2)}+\frac{16i(i-1)(i-2)(i-3)}{d(d-1)(d-2)(d-3)} \label{mulast}
	\end{align}
	The following theorem characterizes the occurrence of perfect state transfer on the classes of the Hamming scheme.
	\begin{theorem}\label{class_hamming}
		The class $H(d,q,i)$ of the Hamming scheme $H(d,q)$ exhibits perfect state transfer if and only if $(d,q,i)\in\left\{(d,2,d),(2,2,1)\right\}$.
	\end{theorem}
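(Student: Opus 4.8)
The plan is to chain together the two reductions already in place and then exploit the fact that the two lowest discriminant eigenvalues are always rational. First I would invoke Lemma~\ref{q=2}: perfect state transfer can occur on $H(d,q,i)$ only when $q=2$, so the problem reduces to the scheme $H(d,2)$. By Theorem~\ref{pst_hamming}, $H(d,2,i)$ exhibits perfect state transfer at time $\tau$ if and only if $T_\tau(\mu_j^i)=(-1)^j$ for every $j\in\{0,\hdots,d\}$. The sufficiency direction I would settle by direct verification of the two families: for $i=d$ the graph $H(d,2,d)$ is $1$-regular, so by Lemma~\ref{reg} we have $P=A_d$ with eigenvalues $\mu_j^d=(-1)^j$, and $T_1(\mu_j^d)=\mu_j^d=(-1)^j$ gives transfer at $\tau=1$; for $(d,i)=(2,1)$ the eigenvalues are $\mu_0=1,\mu_1=0,\mu_2=-1$ by \eqref{mufirst1}--\eqref{mulast}, and $\tau=2$ works since $T_2(0)=-1$ and $T_2(\pm1)=1$.

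For necessity the driving observation is that $\mu_1^i=1-\tfrac{2i}{d}$ from \eqref{mufirst} is rational. Since perfect state transfer forces periodicity by Lemma~\ref{period_ass}, Theorem~\ref{ls} yields $\mu_1^i\in\{0,\pm\tfrac12,\pm1\}$. The requirement $T_\tau(\mu_1^i)=-1$ (because $j=1$ is odd) then eliminates two of these: $\mu_1^i=1$ would force $i=0$, and $\mu_1^i=-\tfrac12=\cos\tfrac{2\pi}{3}$ is impossible because $T_\tau(-\tfrac12)$ is never equal to $-1$ (whenever $3\divides\tau$ it equals $1$, and otherwise it is not $\pm1$). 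This leaves exactly $i\in\{d,\,d/2,\,d/4\}$, where $i=d$ is already accounted for by the first family.

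To finish I would eliminate the remaining two subcases using $\mu_2^i$. A short computation from the displayed formula for $\mu_2^i$ in \eqref{mufirst1}--\eqref{mulast} gives $\mu_2^{d/2}=-\tfrac{1}{d-1}$ and $\mu_2^{d/4}=\tfrac{d-4}{4(d-1)}$, both rational, so Theorem~\ref{ls} forces each into $\{0,\pm\tfrac12,\pm1\}$. For $i=d/2$ this constraint (together with integrality of $i$) leaves only $d=2$, i.e. the pair $(2,2,1)$; for $i=d/4$ it leaves only $d=4$.

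The one genuinely delicate point, and the part I expect to be the main obstacle, is ruling out the surviving case $(d,i)=(4,1)$, since there both $\mu_1=\tfrac12$ and $\mu_2=0$ pass the rationality filter. I would dispose of it with a parity argument on $\tau$: $T_\tau(\tfrac12)=\cos\tfrac{\tau\pi}{3}=-1$ forces $\tau\equiv 3\pmod 6$, in particular $\tau$ odd, while $T_\tau(0)=\cos\tfrac{\tau\pi}{2}=1$ forces $\tau\equiv 0\pmod 4$, in particular $\tau$ even, a contradiction. Hence $(4,1)$ admits no transfer, and the only surviving triples are $(d,2,d)$ and $(2,2,1)$. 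Everything outside this final incompatibility is routine bookkeeping once the rationality of $\mu_1^i$ and $\mu_2^i$ is exploited.
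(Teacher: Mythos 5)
Your proposal is correct and follows essentially the same route as the paper: reduce to $q=2$ via Lemma~\ref{q=2}, invoke periodicity (Lemma~\ref{period_ass}) and Theorem~\ref{ls} on the rational eigenvalues $\mu_1^i$ and $\mu_2^i$ to cut the candidates down to the already-settled case $i=d$ together with $(d,i)\in\{(2,1),(4,1),(4,3)\}$, and then check the Chebyshev conditions of Theorem~\ref{pst_hamming}. The only divergence is in the endgame: you dispose of $i=3d/4$ and of $(4,1)$ by showing the congruences forced on $\tau$ by $T_\tau(-\tfrac{1}{2})=-1$, $T_\tau(\tfrac{1}{2})=-1$ and $T_\tau(0)=1$ are incompatible, whereas the paper computes the period ($12$) to pin $\tau=6$ and then observes $T_6(\tfrac{1}{2})=1$; both arguments are valid.
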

	\begin{proof}
		If $H(d, q, i)$ exhibits perfect state transfer, then by Lemma~\ref{q=2}, $q=2$. Note that $H(d,2,d)$ is a $1$-regular graph, and therefore $A(H(d, 2, d))=P(A(H(d, 2, d)))$. As $T_1(x)=x$, we observe that
		$$A(H(d, 2, d)) = A_d = T_1(P(H(d, 2, d))).$$ 
		Therefore by Theorem~\ref{association1}, the graph $H(d, 2, d)$ exhibits perfect state transfer at time $1$.  
		
		Now consider $i\in\{1, \ldots, d-1 \}$. By Lemma~\ref{period_ass}, if $H(d, q, i)$ exhibits perfect state transfer, then it must be periodic. Applying Theorem~\ref{ls} together with the Equation~\eqref{mufirst}, we find that
		$$\mu_1^i = 1 - \frac{2i}{d} \in \left\{ \pm 1, \pm \frac{1}{2}, 0 \right\},$$
		which implies that $i\in\left\{\frac{d}{2}, \frac{d}{4}, \frac{3d}{4} \right\}$. Additionally, we have
		$$\mu_2^i = 1 - \frac{4i}{d} + \frac{4i(i - 1)}{d(d - 1)} \in \left\{ \pm 1, \pm \frac{1}{2}, 0 \right\}.$$
		Since $i\in\{\frac{d}{2},\frac{d}{4},\frac{3d}{4}\}$ and $\mu_2^i\in\left\{\pm 1, \pm\frac{1}{2},0\right\}$, we conclude that the only possible pairs $(d,i)$ are $(2, 1),~ (4, 1)$ and $(4, 3)$.
		
		Let $(d,i)\in\{(4,1),(4,3)\}$. From equations \eqref{mufirst1} to \eqref{mulast}, we have $\spec_P(H(d,2,i))=\{\pm 1, \pm\frac{1}{2},0\}$. Then by Theorem~\ref{evu_grover} and Lemma~\ref{periodic}, the graph is $12$ periodic. Now, if $H(d,2,i)$ exhibits perfect state transfer at time $\tau$, then by Lemma~\ref{period_ass}, we must have $\tau=6$. However,  $T_6(\frac{1}{2})=1$. Therefore, Theorem~\ref{pst_hamming} implies that the graph $H(d,2,i)$ does not exhibit perfect state transfer.
		
		Now consider $(d,i)=(2,1)$. In this case, the graph $H(d,2,i)$ has distinct eigenvalues $1$, $0$ and $-1$. Note that $T_2(1)=1$, $T_2(0)=-1$ and $T_2(-1)=1$. Therefore by Theorem~\ref{pst_hamming}, the graph $H(d,2,i)$ exhibits perfect state transfer at time $2$. Thus, the desired result follows.
	\end{proof}
	The following result directly follows from the previous theorem.
	\begin{corollary}
		The hypercube $Q_d$ exhibits perfect state transfer if and only if $d\in\{1,2\}$.
	\end{corollary}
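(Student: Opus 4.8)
The plan is to recognize the hypercube $Q_d$ as the class $H(d,2,1)$ of the Hamming scheme $H(d,2)$, exactly as noted earlier in the text, and then to read off the conclusion directly from Theorem~\ref{class_hamming} with the parameters fixed at $q=2$ and $i=1$. Since the corollary is advertised as following immediately from that theorem, the whole argument reduces to a membership check in the exceptional set.

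First I would substitute $(q,i)=(2,1)$ into the characterization of Theorem~\ref{class_hamming}, which asserts that $H(d,q,i)$ exhibits perfect state transfer if and only if $(d,q,i)\in\{(d,2,d),(2,2,1)\}$. After the substitution the question becomes simply whether the triple $(d,2,1)$ belongs to this two-element family of parameter patterns, so I would examine each pattern in turn.

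Then I would check the two conditions. The triple $(d,2,1)$ matches the pattern $(d,2,d)$ (equal first and third coordinates) precisely when $d=1$, and it coincides with the isolated exception $(2,2,1)$ precisely when $d=2$. Hence $H(d,2,1)$ exhibits perfect state transfer if and only if $d\in\{1,2\}$, and conversely both values $d=1$ and $d=2$ clearly satisfy one of the two conditions, which establishes the stated equivalence for $Q_d$.

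Because the statement is an immediate specialization of the already-proved Theorem~\ref{class_hamming}, there is no substantive obstacle here; the only care required is in correctly identifying $Q_d$ with $H(d,2,1)$ and in reading off which of the two exceptional parameter patterns each value of $d$ realizes. I therefore expect the proof to be a one- or two-line deduction.
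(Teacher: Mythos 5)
Your proposal is correct and is exactly the argument the paper intends: it identifies $Q_d$ with $H(d,2,1)$ (as noted when the Hamming scheme is introduced) and reads off from Theorem~\ref{class_hamming} that $(d,2,1)$ matches the pattern $(d,2,d)$ only for $d=1$ and the exception $(2,2,1)$ only for $d=2$. The paper states the corollary follows directly from that theorem without further proof, so your one-line deduction matches its approach.
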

	\subsection{Johnson Scheme}
	Let $n$ and $k$ be two positive integers with $n\geq 2k$. For each $i\in\{0,\hdots,k-1\}$, the \emph{generalized Johnson graph}, denoted $J(n,k,i)$, is the graph whose vertex set is the set of all $k$-subsets of $\{1,\hdots,n\}$ and two vertices $E$ and  $F$ are adjacent if  $|E\cap F|=i$. Let $A_0=I$ and $A_i$ be the adjacency matrix of the graph $J(n,k,k-i)$ for $i\in\{1,\hdots, k\}$. It can be verified that the set $\mathcal{A}:=\{A_0,\hdots,A_k\}$ is an association scheme. This scheme is known as the \emph{Johnson scheme} and it is denoted by $J(n,k)$.                                                         
	
	The generalized Johnson graph $J(n,k,i)$ is a regular graph of order $\binom{n}{k}$ and valency $\binom{k}{i}\binom{n-k}{k-i}$ for $i\in\{0,\hdots, k-1\}$. For $i\in\{0,\hdots, k-1\}$, the eigenvalues (see \cite[page 220]{algcomb}) of the adjacency matrix of $J(n,k,i)$ are given by 
	\begin{equation}\label{evcj}
		\lambda_j^i=\sum_{\ell=0}^{k-i}(-1)^\ell\binom{j}{\ell}\binom{k-j}{k-i-\ell}\binom{n-k-j}{k-i-\ell},\quad j\in\{0,\hdots,k\}.
	\end{equation}
	Therefore the eigenvalues of the scheme $J(n,k)$ are given by $\lambda_j^{k-i}$. In other words, for each $i\in\{1,\hdots,k\}$, $A_i$ has eigenvalues $\lambda_j^{k-i}$ for $j\in\{0,\hdots,k\}$. The next result gives a necessary condition for the occurrence of perfect state transfer on the Johnson scheme $J(n,k)$.
	\begin{lema}\label{n=2k}
		Let $G$ be a graph belonging to the Johnson scheme $J(n,k)$. If $G$ exhibits perfect state transfer, then $n=2k$. Moreover, in $J(2k,k)$, the class $A_k$ is a permutation matrix of order $2$ with no fixed points.
	\end{lema}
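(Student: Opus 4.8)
The plan is to mirror the argument used for the Hamming scheme in Lemma~\ref{q=2}, replacing the Hamming valency by the Johnson valency. First I would invoke Theorem~\ref{association1}: if $G$ exhibits perfect state transfer, then the scheme $J(n,k)$ must contain a class $B$ that is a permutation matrix of order $2$ with no fixed points. Since the classes are $A_0=I$ and $A_i=A(J(n,k,k-i))$ for $i\in\{1,\hdots,k\}$, there is some index $i\in\{1,\hdots,k\}$ with $A_i=B$. Being a permutation matrix, $A_i$ is $1$-regular, so the generalized Johnson graph $J(n,k,k-i)$ has valency $1$.

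The key computation is to translate ``valency $1$'' into a constraint on $(n,k,i)$. Since $i\in\{1,\hdots,k\}$, the argument $k-i$ lies in $\{0,\hdots,k-1\}$, where the valency formula $\binom{k}{m}\binom{n-k}{k-m}$ applies with $m=k-i$; this gives valency $\binom{k}{k-i}\binom{n-k}{i}$. Setting this equal to $1$ and using that both factors are positive integers, I would conclude that each factor equals $1$. From $\binom{k}{k-i}=1$ within the range $k-i\in\{0,\hdots,k-1\}$ one forces $k-i=0$, i.e. $i=k$; then $\binom{n-k}{k}=1$ together with $k\geq 1$ forces $n-k=k$, that is, $n=2k$. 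This establishes the first assertion.

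For the ``moreover'' part, with $n=2k$ the distinguished class is $A_k=A(J(2k,k,0))$, the graph on the $k$-subsets of $\{1,\hdots,2k\}$ in which two subsets are adjacent exactly when they are disjoint. I would observe that each $k$-subset $E$ has a unique disjoint $k$-subset, namely its complement $\{1,\hdots,2k\}\setminus E$; hence $A_k$ is the permutation matrix of the complementation map. Complementation is an involution, and it has no fixed point since $E\cap(\{1,\hdots,2k\}\setminus E)=\emptyset$ forces $E\neq\{1,\hdots,2k\}\setminus E$ when $k\geq 1$. Thus $A_k$ is a permutation matrix of order $2$ with no fixed points, completing the statement.

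I expect no deep obstacle here; the only genuine subtlety is the bookkeeping with the indexing convention of the Johnson scheme, where the class $A_i$ corresponds to the graph $J(n,k,k-i)$ rather than $J(n,k,i)$, and ensuring the valency formula is evaluated within its stated range $\{0,\hdots,k-1\}$. One should also note that $J(2k,k,0)$ is a genuine graph only because $n\geq 2k$ is assumed, and that for $n>2k$ the disjointness graph has valency $\binom{n-k}{k}>1$, which is precisely why equality to $1$ pins down $n=2k$.
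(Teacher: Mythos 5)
Your proposal is correct and follows essentially the same route as the paper: invoke Theorem~\ref{association1} to obtain a fixed-point-free involutory permutation class $A_i$, translate the $1$-regularity of $J(n,k,k-i)$ into $\binom{k}{k-i}\binom{n-k}{i}=1$, and deduce $i=k$ and $n=2k$. Your explicit identification of $A_k$ with the complementation involution on $k$-subsets merely spells out the ``moreover'' clause in slightly more detail than the paper does.
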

	\begin{proof}
		If perfect state transfer occurs on $G$, then by Theorem~\ref{association1}, the Johnson scheme contains a permutation matrix $B$ of order 2 with no fixed points. Consequently, there exists an index $ i \in \{1, \ldots, k\} $ such that $B=A_i $. This means that the graph $J(n, k, k-i)$ is regular of degree 1, and therefore
		$$\binom{k}{k - i} \binom{n - k}{i} = 1.$$ 
		From this, we deduce that $i=k$ and $n=2k$.
	\end{proof}
	We now provide a necessary and sufficient condition for the occurrence of perfect state transfer of a graph in the Johnson scheme $J(2k,k)$.
	\begin{theorem}\label{pst_johnson}
		Let $G$ be a graph belonging to the Johnson scheme $J(2k,k)$. Let the distinct eigenvalues of the discriminant $P$ of $G$ be $\mu_0,\hdots,\mu_d$ such that $\mu_0>\cdots>\mu_d$. Then $G$ exhibits perfect state transfer at time $\tau$ if and only if $T_\tau(\mu_j)=1$ if $j$ is even and $T_\tau(\mu_j)=-1$ if $j$ is odd.
	\end{theorem}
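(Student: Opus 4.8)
The plan is to mirror the proof of Theorem~\ref{pst_hamming} for the Hamming scheme, since Theorem~\ref{association2} already reduces the whole problem to identifying the sign partition $(\mathcal{I}_B^+,\mathcal{I}_B^-)$ induced by the distinguished involution $B$ in the scheme. First I would invoke Lemma~\ref{n=2k}, which guarantees that in $J(2k,k)$ the class $A_k$ is a permutation matrix of order $2$ with no fixed points; this $A_k$ plays the role of $B$. Let $\{E_0,\hdots,E_k\}$ denote the principal idempotents of $J(2k,k)$. By Theorem~\ref{sd3}, $A_k E_j$ is a scalar multiple of $E_j$, and the scalar is precisely the eigenvalue of $A_k$ on $E_j$.

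The crux is to evaluate that scalar. Since $A_k$ is the adjacency matrix of $J(2k,k,0)$ (the case $k-i=0$, i.e. $i=0$ in the eigenvalue list), its eigenvalue on $E_j$ is $\lambda_j^0$. Using the formula~\eqref{evcj} with $i=0$ and specializing to $n=2k$, this becomes
$$\lambda_j^0=\sum_{\ell=0}^{k}(-1)^\ell\binom{j}{\ell}\binom{k-j}{k-\ell}^2.$$
The key observation is that $\binom{j}{\ell}$ vanishes unless $\ell\leq j$, while $\binom{k-j}{k-\ell}$ vanishes unless $\ell\geq j$, so the single term $\ell=j$ survives, and it equals $(-1)^j$. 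Hence $A_k E_j=(-1)^j E_j$ for every $j\in\{0,\hdots,k\}$, which says exactly that $j\in\mathcal{I}_{A_k}^+$ when $j$ is even and $j\in\mathcal{I}_{A_k}^-$ when $j$ is odd.

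Finally I would feed this sign partition directly into Theorem~\ref{association2}: since $G$ belongs to $J(2k,k)$ and $A_k$ is an involution of the required type, perfect state transfer occurs at time $\tau$ precisely when $T_\tau(\mu_j)=1$ on the even-indexed idempotents and $T_\tau(\mu_j)=-1$ on the odd-indexed ones, which is the claimed condition. I expect no serious obstacle; the only point requiring care is the specialization $n=2k$ together with the vanishing argument that isolates the single surviving term in the Krawtchouk-type sum, exactly analogous to obtaining $\lambda_j^d=(-1)^j$ in the Hamming setting.
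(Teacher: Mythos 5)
Your proposal is correct and follows essentially the same route as the paper: invoke Lemma~\ref{n=2k} to identify $A_k$ as the required involution, compute $\lambda_j^0=(-1)^j$ from the eigenvalue formula~\eqref{evcj} to obtain the sign partition, and conclude via Theorem~\ref{association2}. The only difference is that you spell out the vanishing argument isolating the $\ell=j$ term, which the paper leaves implicit.
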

	\begin{proof}
		By Lemma~\ref{n=2k}, $A_k$ is a permutation matrix of order $2$ with no fixed points in the classes of $J(2k,k)$. Hence Condition (i) of Theorem~\ref{association2} is satisfied. Let $E_0,\hdots,E_k$ be the principal idempotents of the scheme $J(2k,k)$. Recall that $(\mathcal{I}_{A_k}^+,\mathcal{I}_{A_k}^-)$  is the partition of the index set $\{0,\hdots,k\}$ such that $j\in\mathcal{I}_{A_k}^+$ if $A_kE_j=E_j$ and $j\in\mathcal{I}_{A_k}^-$ otherwise.   By Theorem~\ref{sd3}, it follows that $A_kE_j=\lambda_j^0E_j$ for $j\in\{0,\hdots,k\}$. From~\eqref{evcj}, we find 
		$$\lambda_j^0=(-1)^j.$$
		Therefore $A_kE_j=(-1)^jE_j$ for $j\in\{0,\hdots,d\}$. This implies that $j\in \mathcal{I}_{A_k}^+$ if $j$ is even and $j\in \mathcal{I}_{A_k}^-$ if $j$ is odd. By Theorem~\ref{association2} with this partition, we have the desired result.
	\end{proof}
	Since $J(2k,k,i)$ is a $\binom{k}{i}^2$-regular graph, by \eqref{evcj} the eigenvalues of the discriminant of $J(2k,k,i)$ are given by 
	\begin{equation*}
		\mu_j^i=\frac{1}{\binom{k}{i}^2}\sum_{\ell=0}^{k-i} (-1)^\ell\binom{j}{\ell}\binom{k-j}{k-i-\ell}^2~\text{for $j\in\{0,\dots,k\}$}.
	\end{equation*}
	\begin{theorem}\label{class_johnson}
		The class $J(n,k,i)$ of the Johnson scheme $J(n,k)$ exhibits perfect state transfer if and only if $(n,k,i)\in\left\{(2k,k,0),(4,2,1)\right\}$.
	\end{theorem}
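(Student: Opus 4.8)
The plan is to follow the same strategy as in the proof of Theorem~\ref{class_hamming}: first use periodicity to sharply restrict the parameters, then settle the handful of survivors by direct computation. By Lemma~\ref{n=2k}, a graph in $J(n,k)$ can exhibit perfect state transfer only if $n=2k$, so I restrict to the classes $J(2k,k,i)$ with $i\in\{0,\hdots,k-1\}$. The class $J(2k,k,0)$ has adjacency matrix $A_k$, the perfect matching pairing each $k$-set with its complement; being $1$-regular it satisfies $P=A_k$, and since $T_1(x)=x$ we get $A_k=T_1(P)$. As $A_k$ is a permutation matrix of order $2$ with no fixed points (Lemma~\ref{n=2k}), Theorem~\ref{association1} yields perfect state transfer at time $1$, recovering the pair $(2k,k,0)$.

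For $i\in\{1,\hdots,k-1\}$ I argue through periodicity. By Lemma~\ref{period_ass}, perfect state transfer forces $J(2k,k,i)$ to be periodic, so Theorem~\ref{ls} requires every rational discriminant eigenvalue to lie in $\{\pm1,\pm\frac12,0\}$. From the displayed eigenvalue formula for $\mu_j^i$, a short computation gives the rational value $\mu_1^i=\frac{2i-k}{k}$; since $1\le i\le k-1$ rules out $\pm1$, this confines $i$ to $\left\{\frac{k}{4},\frac{k}{2},\frac{3k}{4}\right\}$ (those that are integers). To eliminate these I compute $\mu_2^i$: rewriting each $\binom{k-2}{\,\cdot\,}$ as a rational multiple of $\binom{k}{i}$ gives the closed form
\begin{equation*}
\mu_2^i=\frac{\bigl(i(i-1)\bigr)^2-2\bigl(i(k-i)\bigr)^2+\bigl((k-i)(k-i-1)\bigr)^2}{\bigl(k(k-1)\bigr)^2},
\end{equation*}
which is a rational number symmetric under $i\mapsto k-i$. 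Specializing yields $\mu_2^{k/2}=-\frac{1}{2(k-1)}$ and, writing $k=4t$, $\mu_2^{k/4}=\mu_2^{3k/4}=\frac{8t-5}{8(4t-1)}$.

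The decisive step is the arithmetic of these two expressions against the set $\{\pm1,\pm\frac12,0\}$. For $i=\frac{k}{2}$ the value $-\frac{1}{2(k-1)}$ lies in that set only when $k=2$, forcing $(k,i)=(2,1)$, i.e. $(n,k,i)=(4,2,1)$; for $i\in\{\frac{k}{4},\frac{3k}{4}\}$ one checks $0<\frac{8t-5}{8(4t-1)}<\frac14$ for every integer $t\ge1$, so $\mu_2^i$ never meets the allowed set and these parameters give no periodic graph. Finally I confirm the survivor: for $J(4,2,1)$ the discriminant eigenvalues are $\mu_0=1$, $\mu_1=0$, $\mu_2=-\frac12$, and Theorem~\ref{pst_johnson} requires $T_\tau(1)=1$, $T_\tau(0)=-1$, $T_\tau(-\tfrac12)=1$; taking $\tau=6$ works since $T_6(0)=\cos3\pi=-1$ and $T_6(-\tfrac12)=\cos4\pi=1$, giving perfect state transfer at time $6$ and completing the characterization. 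The main obstacle is the $\mu_2$ computation together with the observation that the branch $i\in\{\frac{k}{4},\frac{3k}{4}\}$ contributes no periodic graph whatsoever; in contrast to the Hamming case, here $\mu_2$ alone eliminates every parameter beyond the two exceptional ones, so no separate period-versus-transfer refinement is required.
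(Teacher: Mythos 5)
Your proof is correct; I checked the computations: $\mu_1^i=\frac{2i-k}{k}$, $\mu_2^{k/2}=-\frac{1}{2(k-1)}$, and $\mu_2^{k/4}=\mu_2^{3k/4}=\frac{8t-5}{8(4t-1)}$ all come out right (the numerator $32t^2-28t+5$ does factor as $(4t-1)(8t-5)$), and the bound $0<\frac{8t-5}{8(4t-1)}<\frac{1}{4}$ holds for every integer $t\ge 1$, so that branch is genuinely eliminated. The skeleton of your argument coincides with the paper's: reduce to $n=2k$ via Lemma~\ref{n=2k}, dispose of $i=0$ at time $1$ via Theorem~\ref{association1}, invoke Lemma~\ref{period_ass} and Theorem~\ref{ls} to constrain the remaining parameters, and certify $J(4,2,1)$ with $T_6$. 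Where you diverge is in the choice of eliminating eigenvalue: you transplant the Hamming-scheme computation wholesale, working with $\mu_1^i$ and then $\mu_2^i$ over the three residual cases $i\in\{\frac{k}{4},\frac{k}{2},\frac{3k}{4}\}$, whereas the paper evaluates the single eigenvalue $\mu_k^i=\frac{(-1)^{k-i}}{\binom{k}{i}}$ (for $j=k$ only the $\ell=k-i$ term of the sum survives), so that Theorem~\ref{ls} immediately forces $\binom{k}{i}\in\{1,2\}$ and hence $(k,i)=(2,1)$ in one line, with no case split and no factoring of quadratics in $t$. Both routes are valid; yours costs noticeably more computation but shows that the obstruction already appears among the first two nontrivial eigenvalues, while the paper's choice of $j=k$ is the cleaner one and is precisely the point at which the Johnson argument should not simply mirror the Hamming one.
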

	\begin{proof}
		If $J(n,k,i)$ exhibits perfect state transfer, then by Lemma~\ref{n=2k}, we must have $n=2k$.  For $i=0$, we observe that $A(J(2k,k,0))=P(J(2k,k,0))$, and hence
		$$A(J(2k,k,0))=A_k=T_1(P(J(2k,k,0))).$$
		Therefore by Theorem~\ref{association1}, $J(2k,k,0)$ exhibits perfect state transfer at time $1$. Now consider the case $i\in\{1,\hdots, k-1\}$. By Lemma~\ref{period_ass}, if $J(2k,k,i)$ exhibits perfect state transfer at time $\tau$, then the graph is $2\tau$-periodic. Therefore by Theorem~\ref{ls}, $\mu_j^i\in\{\pm1,\pm\frac{1}{2},0\}$ for $j\in\{0,\dots,k\}$. In particular,
		$$\mu_k^i=\frac{(-1)^{k-i}}{\binom{k}{i}}\in \left\{\pm1,\pm\frac{1}{2},0\right\}.$$
		This implies that $\binom{k}{i}\in\{1,2\}$. Since $i\in\{1,\hdots, k-1\}$, we have $\binom{k}{i}\neq 1$. Hence we must have $\binom{k}{i}=2$, giving $(k,i)=(2,1)$. Now, the distinct eigenvalues of $P(J(4,2,1))$ are $1,0$ and $-\frac{1}{2}$. Thus the graph $J(4,2,1)$ is $12$-periodic, and therefore $\tau=6$. We find $T_6(1)=1$, $T_6(0)=-1$ and $T_6(-\frac{1}{2})=1$. Therefore by Theorem~\ref{pst_johnson}, the graph $J(4,2,1)$ exhibits perfect state transfer at time $6$.
	\end{proof}
	\section{Perfect state transfer on distance-regular graphs}
	Distance-regular graphs form an important class of graphs in algebraic combinatorics, known for their highly symmetric and regular distance properties. For a distance-regular graph $G$ with diameter $d$, recall the association scheme $\{A_0,\hdots,A_d\}$ from Subsection~\ref{def_distance}. 
	\begin{lema}[{\cite[Proposition~11.6.2]{spectraofgraphs}}]\label{distance_partition}
		Let $G$ be a distance-regular graph of diameter $d$. Let the distinct eigenvalues of the adjacency matrix $A$ of $G$ be $\ld_0,\hdots,\ld_d$ such that $\ld_0>\cdots>\ld_d$ with corresponding idempotents $E_0,\hdots,E_d$. If $G$ is antipodal with fibres of size two, then $$A_dE_j=(-1)^jE_j~\text{for each $j\in\{0,\hdots,d\}$}.$$
	\end{lema}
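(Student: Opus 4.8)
The plan is to split the argument into two parts: first identify $A_d$ concretely as a permutation matrix and deduce that it acts on each eigenspace by a scalar $\pm 1$, and then pin down that scalar on the $j$-th eigenspace as $(-1)^j$. For the first part I would use the hypothesis directly. By the notion of antipodal graph recalled above, $G_d$ is a disjoint union of cliques (the fibres); since the fibres have size two, $G_d$ is a perfect matching, so $A_d$ is a symmetric permutation matrix of order $2$ with no fixed points, and in particular $A_d^2 = I$. Because $G$ is distance-regular, the three-term relation \eqref{recc} together with its top-level instance $A A_d = b_{d-1}A_{d-1} + a_d A_d$ shows, by induction on $i$, that each $A_i$ is a polynomial in $A$; write $A_i = v_i(A)$, where $v_i$ has degree $i$ and positive leading coefficient (the coefficients $c_{i+1}$ are positive). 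Consequently $A_d = v_d(A)$ commutes with $A$ and shares its eigenspaces, so $A_d E_j = v_d(\lambda_j)\,E_j$ for each $j$. From $A_d^2 = I$ we obtain $v_d(\lambda_j)^2 = 1$, hence $v_d(\lambda_j)\in\{+1,-1\}$ for every $j\in\{0,\hdots,d\}$.

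It then remains to show $v_d(\lambda_j) = (-1)^j$, which is the crux of the lemma. Rewriting \eqref{recc} as $c_{i+1}v_{i+1}(x) = (x-a_i)v_i(x) - b_{i-1}v_{i-1}(x)$ with $b_{i-1},c_{i+1} > 0$ exhibits $v_0,\hdots,v_d$ as a sequence of orthogonal polynomials. Moreover, since the top-level relation $A A_d = b_{d-1}A_{d-1} + a_d A_d$ carries no $A_{d+1}$ term, the degree-$(d+1)$ polynomial $(x-a_d)v_d(x) - b_{d-1}v_{d-1}(x)$ annihilates $A$, and therefore is, up to a positive scalar, the minimal polynomial $\prod_{j=0}^{d}(x-\lambda_j)$ of $A$; denote it $v_{d+1}$, so that it continues the orthogonal sequence and its zeros are exactly $\lambda_0 > \cdots > \lambda_d$. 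By the standard interlacing theorem for the zeros of consecutive orthogonal polynomials, $v_d$ has precisely one zero in each open interval $(\lambda_j,\lambda_{j-1})$ for $j\in\{1,\hdots,d\}$ and no zero outside $[\lambda_d,\lambda_0]$. Since $v_d$ has positive leading coefficient we have $v_d(\lambda_0) > 0$, and passing from $\lambda_0$ down to $\lambda_j$ crosses exactly $j$ of these simple zeros, so $\operatorname{sgn} v_d(\lambda_j) = (-1)^j$. Combining this with $v_d(\lambda_j)\in\{+1,-1\}$ from the first part yields $v_d(\lambda_j) = (-1)^j$, that is, $A_dE_j=(-1)^jE_j$ for each $j\in\{0,\hdots,d\}$, as required.

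The main obstacle is this final sign determination: the commuting-matrix observation alone only delivers $v_d(\lambda_j)=\pm1$, and extracting the precise alternating pattern requires genuine information about the zeros of the distance polynomials. I expect the cleanest route to be the orthogonal-polynomial interlacing sketched above, which is equivalent to the classical fact that the standard (cosine) sequence attached to $\lambda_j$ changes sign exactly $j$ times. The one delicate point deserving care is verifying that the $v_i$ genuinely form such a sequence — in particular that the recurrence coefficients $b_{i-1},c_{i+1}$ are strictly positive and that $v_{d+1}$ is (a positive multiple of) the minimal polynomial of $A$ — since this is precisely where the full distance-regularity of $G$, rather than merely the Bose--Mesner structure, is used in an essential way.
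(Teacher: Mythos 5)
Your proof is correct. Note that the paper itself gives no proof of this lemma: it is quoted verbatim from Brouwer--Haemers (Proposition~11.6.2), so there is no in-paper argument to compare against; your write-up is in fact the standard textbook proof of that proposition. Both halves check out: antipodality with fibres of size two makes $G_d$ a perfect matching, so $A_d$ is a fixed-point-free symmetric involution and $v_d(\lambda_j)\in\{\pm1\}$; and the sign determination via the distance polynomials $v_0,\dots,v_d$ is sound, since the recurrence $c_{i+1}v_{i+1}(x)=(x-a_i)v_i(x)-b_{i-1}v_{i-1}(x)$ has strictly positive coefficients $b_{i-1},c_{i+1}$ by Lemma~\ref{intarray}, and the terminal relation $AA_d=b_{d-1}A_{d-1}+a_dA_d$ identifies $(x-a_d)v_d(x)-b_{d-1}v_{d-1}(x)$ as a positive multiple of the minimal polynomial of $A$. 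The only point a careful reader might press on is your appeal to ``the standard interlacing theorem'' for the terminal pair $(v_d,v_{d+1})$, where $v_{d+1}$ vanishes identically on the support of the orthogonality measure; this is harmless because the interlacing is purely a consequence of the three-term recurrence with positive coefficients (equivalently, of Cauchy interlacing applied to the tridiagonal intersection matrix and its leading $d\times d$ principal submatrix, whose characteristic polynomials are, up to positive scalars and a diagonal conjugation making the matrix symmetric, $v_{d+1}$ and $v_d$). With that observation your argument is complete; it is also essentially equivalent to the usual formulation that the standard sequence $\bigl(v_i(\lambda_j)/k_i\bigr)_{i=0}^{d}$ has exactly $j$ sign changes.
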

	The following result provides a simple necessary and sufficient condition for the occurrence of perfect state transfer on distance-regular graphs. A distance-regular graph of diameter $d$ has exactly $d+1$ distinct eigenvalues of its adjacency matrix, and thus it will also have  $d+1$ distinct discriminant eigenvalues.
	\begin{theorem}\label{pst_distance}
		Let $G$ be a distance-regular graph of diameter $d$. Let the distinct eigenvalues of the discriminant $P$ of $G$ be $\mu_0,\hdots,\mu_d$ such that $\mu_0>\cdots>\mu_d$. Then perfect state transfer occurs on $G$ at time $\tau$ if and only if the following two conditions hold.
		\begin{enumerate}[label=(\roman*)]	
			\item\label{pst_distance_i}  $G$ is antipodal with fibres of size $2$, and
			\item $T_\tau(\mu_j)=1$ if $j$ is even and $T_\tau(\mu_j)=-1$ if $j$ is odd.
		\end{enumerate}
		Moreover, if (i) and (ii) hold, then $G$ exhibits perfect state transfer between antipodal vertices.
	\end{theorem}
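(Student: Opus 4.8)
The plan is to recognize that a distance-regular graph $G$ of diameter $d$ carries the association scheme $\{A_0,\hdots,A_d\}$ of its distance matrices, and to invoke Theorem~\ref{association2} for this scheme. Since $G$ has exactly $d+1$ distinct adjacency eigenvalues while the scheme has $d$ classes, Lemma~\ref{sd4} guarantees that the principal idempotents of the scheme are precisely the spectral idempotents $E_0,\hdots,E_d$ of $A$, hence of $P=\frac1k A$ by Lemma~\ref{reg}, indexed so that $E_j$ belongs to $\mu_j$. Thus the problem reduces to identifying, among the classes $A_1,\hdots,A_d$, which can serve as the distinguished matrix $B$ of Theorem~\ref{association2}, and to computing the corresponding index partition $(\mathcal{I}_B^+,\mathcal{I}_B^-)$.

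For the sufficiency direction, I would assume (i) and (ii). Antipodality with fibres of size $2$ means the distance-$d$ graph $G_d$ is a perfect matching, so $A_d$ is a symmetric permutation matrix with zero diagonal, i.e. a permutation matrix of order $2$ with no fixed points; choosing any antipodal pair $u,v$ gives $(A_d)_{uv}=1$, so Condition (i) of Theorem~\ref{association2} holds with $B=A_d$. Lemma~\ref{distance_partition} then yields $A_dE_j=(-1)^jE_j$, so $\mathcal{I}_{A_d}^+$ is the set of even indices and $\mathcal{I}_{A_d}^-$ the set of odd indices; hypothesis (ii) is then exactly Condition (ii) of Theorem~\ref{association2}, giving perfect state transfer from $u$ to the antipodal vertex $v$.

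For the necessity direction, suppose $G$ exhibits perfect state transfer. By Theorem~\ref{association1} the scheme contains a class $B$ that is a permutation matrix of order $2$ with no fixed points; since $A_0=I$ has fixed points, $B=A_\ell$ for some $\ell\in\{1,\hdots,d\}$, and $A_\ell$ being $1$-regular means $k_\ell=1$, where $k_i$ denotes the valency of $G_i$. The crux, and the step I expect to be the main obstacle, is to prove that $k_\ell=1$ forces $\ell=d$. I would argue from the standard valency identity $k_{i-1}b_{i-1}=k_ic_i$ together with the monotonicity of Lemma~\ref{intarray}: if $1\le\ell<d$, then $k_{\ell-1}=c_\ell/b_{\ell-1}\ge1$ and $k_{\ell+1}=b_\ell/c_{\ell+1}\ge1$ give $c_\ell\ge b_{\ell-1}$ and $b_\ell\ge c_{\ell+1}$, while the chain $b_\ell\le b_{\ell-1}\le c_\ell\le c_{\ell+1}\le b_\ell$ forces equality throughout and hence $k_{\ell-1}=k_{\ell+1}=1$; propagating downward yields $k_1=b_0=1$, contradicting that a distance-regular graph of diameter at least $2$ has valency at least $2$. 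Therefore $\ell=d$ and $k_d=1$, which makes ``being at distance $d$'' a pairing of the vertices, so $G$ is antipodal with fibres of size $2$; this establishes (i). Finally, Lemma~\ref{distance_partition} again identifies $(\mathcal{I}_{A_d}^+,\mathcal{I}_{A_d}^-)$ with the even and odd indices, so Condition (ii) of Theorem~\ref{association2} becomes statement (ii), and since $(A_d)_{uv}=1$ means $\dist_G(u,v)=d$, the transfer occurs between antipodal vertices.
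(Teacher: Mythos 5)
Your proposal is correct and follows essentially the same route as the paper: reduce to Theorem~\ref{association2}, show the distinguished class must be $A_d$ via the intersection array, and read off the sign pattern from Lemma~\ref{distance_partition}. The only (minor) variation is in ruling out $A_\ell$ for $\ell<d$ --- you propagate $k_m=1$ downward to force $b_0=1$, while the paper propagates $b_j=1$ upward to exhibit a degree-one vertex --- but both rest on the monotonicity in Lemma~\ref{intarray} and reach the same $K_2$ contradiction.
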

	\begin{proof}	
		Let $G$ exhibit perfect state transfer at time $\tau$. Then by Theorem~\ref{association2}, $A_i$ is a permutation matrix of order $2$ with no fixed points for some $i\in\{1,\hdots,d\}$. Suppose, if possible $i<d$. Then we have $d>1$. Let the $uv$-th entry of $A_i$ be $1$. Then the vertex $v$ is the only vertex at distance $i$ from $u$, giving $b_{i-1} = 1$. By Lemma~\ref{intarray}, it follows that $b_j = 1$ for all $j \geq i$. In particular, there is a unique vertex at distance $d$ from $u$, say $x$, with $\deg x = 1$. Since $G$ is connected and regular, $\deg x=1$ forces the graph to be $K_2$, contradicting the fact that $d > 1$. Hence we must have $i=d$. From the fact that $A_d$ is a permutation matrix of order $2$ with no fixed points, we conclude that $G$ is antipodal with fibres of size $2$. Now Condition (ii) follows from this fact along with Theorem~\ref{association2} and Lemma~\ref{distance_partition}.
		
		Conversely, if Conditions (i) and (ii) hold, then Theorem~\ref{association2} and Lemma~\ref{distance_partition} give that $G$ exhibits perfect state transfer at time $\tau$ between antipodal vertices.
	\end{proof}
	The cycle $C_n$ and the complete graph $K_n$ are both distance-regular graphs. The intersection array of $C_n$ is $\{2,1,\hdots,1;1,1,\hdots,1\}$ if $n$ is odd, and $\{2,1,\hdots,1;1,\hdots,1,2\}$ if $n$ is even. The intersection array of $K_n$ is $\{n-1,1\}$ if $n>1$. In the next two results, we use Theorem~\ref{pst_distance} to completely characterize the existence of perfect state transfer on cycles and complete graphs.
	\begin{lema}\label{pst_cycle}
		The cycle $C_n$ exhibits perfect state transfer if and only if $n$ is even. Moreover, if $n$ is even, then $C_n$ exhibits perfect state transfer between antipodal vertices at time $\frac{n}{2}$.
	\end{lema}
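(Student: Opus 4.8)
The plan is to apply Theorem~\ref{pst_distance} directly, since $C_n$ is distance-regular with diameter $d=\lfloor n/2\rfloor$. That theorem reduces the existence of perfect state transfer to two conditions: that $C_n$ be antipodal with fibres of size $2$, and that the Chebyshev values $T_\tau(\mu_j)$ alternate in sign according to the parity of $j$. My first step is to settle condition~\ref{pst_distance_i} by examining the parity of $n$, and then, in the even case, to verify the second condition by computing the discriminant spectrum explicitly.

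First I would handle the case $n$ odd. Here $d=(n-1)/2$, and each vertex has exactly two vertices at distance $d$; equivalently, the distance-$d$ graph $G_d$ is $2$-regular, so $A_d$ is not a permutation matrix. Concretely, the two vertices farthest from a given $u$ are themselves at distance $1$ from one another, so $C_n$ is not antipodal. Hence condition~\ref{pst_distance_i} fails, and Theorem~\ref{pst_distance} forbids perfect state transfer.

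Next, suppose $n$ is even and write $m=n/2=d$. Now each vertex has a unique antipodal partner, so $G_d$ is a perfect matching; thus $A_d$ is a permutation matrix of order $2$ with no fixed points, and $C_n$ is antipodal with fibres of size $2$, establishing condition~\ref{pst_distance_i}. Since $C_n$ is $2$-regular, Lemma~\ref{reg} gives $P=\tfrac12 A$, so from the adjacency eigenvalues $2\cos(2\pi j/n)$ the distinct discriminant eigenvalues are $\mu_j=\cos\!\big(\tfrac{\pi j}{m}\big)$ for $j\in\{0,\dots,m\}$. These are strictly decreasing in $j$, matching the required ordering $\mu_0>\cdots>\mu_d$.

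Finally, I would take $\tau=m=n/2$ and invoke the Chebyshev identity \eqref{chb}:
\[
T_{m}(\mu_j)=T_m\!\big(\cos\tfrac{\pi j}{m}\big)=\cos(\pi j)=(-1)^j .
\]
Thus $T_\tau(\mu_j)=1$ for even $j$ and $-1$ for odd $j$, which is exactly the second condition of Theorem~\ref{pst_distance}; the theorem then yields perfect state transfer at time $n/2$ between antipodal vertices. The only mild subtlety — and the step I would be most careful about — is confirming that the natural index $j$ in $\mu_j=\cos(\pi j/m)$ coincides with the decreasing order demanded by the theorem, so that the parity of $j$ lines up correctly with the alternation of $T_\tau(\mu_j)$; once this is checked the conclusion is immediate.
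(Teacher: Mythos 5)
Your proof is correct and follows essentially the same route as the paper: apply Theorem~\ref{pst_distance}, rule out odd $n$ via condition~\ref{pst_distance_i}, and for even $n$ compute $\mu_j=\cos\big(\pi j/\tfrac{n}{2}\big)$ and use $T_{n/2}(\mu_j)=(-1)^j$ from \eqref{chb}. (One negligible imprecision: for $n=3$ the cycle is $K_3$, which \emph{is} antipodal but with fibres of size $3$; your other observation --- that $G_d$ is $2$-regular so $A_d$ is not a fixed-point-free involutory permutation matrix --- is what actually defeats condition~\ref{pst_distance_i} uniformly for all odd $n$.)
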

	\begin{proof}
		If $C_n$ exhibits perfect state transfer, then by Condition~\ref{pst_distance_i} of Theorem~\ref{pst_distance}, $n$ must be even. Then $C_n$ is an antipodal distance-regular graph with fibres of size $2$.  Note that $C_n$ is $n$-periodic. Thus if $C_n$ exhibits perfect state transfer at time $\tau$, then $\tau=\frac{n}{2}$. The distinct discriminant eigenvalues of $C_n$ are $\mu_j=\cos \frac{2\pi j}{n}$ for $j\in\{0,\hdots,\frac{n}{2}-1\}$ with $\mu_0>\cdots>\mu_{\frac{n}{2}-1}$. Then by \eqref{chb}, $T_\frac{n}{2}(\mu_j)=1$ if $j$ is even and  $T_\frac{n}{2}(\mu_j)=-1$ if $j$ is odd. Therefore by Theorem~\ref{pst_distance}, $C_n$ exhibits perfect state transfer at time $\frac{n}{2}$.
	\end{proof}
	\begin{lema}\label{pst_complete}
		The complete graph $K_n$ exhibits perfect state transfer if and only if $n=2$.
	\end{lema}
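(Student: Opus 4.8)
The plan is to view $K_n$ as a distance-regular graph of diameter $1$ and apply the criterion of Theorem~\ref{pst_distance}. For $n\geq 2$ any two distinct vertices of $K_n$ are adjacent, so $\operatorname{diam}(K_n)=1$ and the associated scheme is just $\{A_0,A_1\}$ with $A_0=I$ and $A_1=J-I$ the adjacency matrix.

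For the forward implication I would apply Condition~\ref{pst_distance_i} of Theorem~\ref{pst_distance}: if $K_n$ exhibits perfect state transfer, then it must be antipodal with fibres of size $2$. Since the diameter is $1$, the distance-$1$ graph $G_1$ is $K_n$ itself, a single clique on $n$ vertices, so the unique fibre has size $n$; demanding fibres of size $2$ forces $n=2$. An equivalent and even shorter route is through Theorem~\ref{association1}: perfect state transfer requires the scheme to contain a class that is a permutation matrix of order $2$ with no fixed points, and the only non-identity class here is $A_1=J-I$, whose constant row sum equals $n-1$; thus $A_1$ is a permutation matrix exactly when $n-1=1$.

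For the converse I would verify $n=2$ directly. The graph $K_2$ is $1$-regular, so Lemma~\ref{reg} gives $P=A=A_1$, and $A_1$ is the $2\times 2$ transposition matrix, a permutation matrix of order $2$ with no fixed points. Because $T_1(x)=x$, we have $T_1(P)=P=A_1$, and Theorem~\ref{association1} then delivers perfect state transfer at time $1$ between the two antipodal vertices.

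I expect no genuine obstacle here: once $K_n$ is placed in its diameter-$1$ scheme, both directions collapse to the elementary fact that $J-I$ is a permutation matrix if and only if $n=2$. The only point deserving a moment of care is that the antipodal/fibre formulation of Theorem~\ref{pst_distance} and the permutation-matrix formulation of Theorem~\ref{association1} agree in this degenerate case, which they do: $A_1$ being a permutation matrix of order $2$ is precisely the statement that $G_1=K_n$ decomposes into edges, and for a complete graph this occurs only when $n=2$.
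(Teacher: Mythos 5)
Your argument is correct and follows essentially the same route as the paper: the paper's proof simply observes that $K_n$ is antipodal with fibres of size $n$ and invokes Condition~(i) of Theorem~\ref{pst_distance}, which is exactly your forward implication, and the $n=2$ case is immediate. Your extra details (the alternative via Theorem~\ref{association1} and the explicit $T_1(P)=A_1$ verification for $K_2$) are all consistent with the paper's framework but not needed beyond what the paper records.
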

	\begin{proof}
		The complete graph $K_n$ is an antipodal distance-regular graph with fibres of size $n$. Therefore by Theorem~\ref{pst_distance}, the result follows.
	\end{proof}
	The only distance-regular graphs of diameter $1$ are the complete graphs. Consequently, the only distance-regular graph of diameter $1$ exhibiting perfect state transfer is $K_2$. In the next two theorems, we characterize perfect state transfer on distance-regular graphs of diameter $2$ and diameter $3$. Let $G$ be a regular graph that is neither complete nor empty. Then $G$ is called \emph{strongly regular} with \emph{parameters} $(n,k,a,c)$ if it is a $k$-regular graph on $n$ vertices such that every pair of adjacent vertices has exactly $a$ common neighbors, and every pair of non-adjacent vertices has exactly $c$ common neighbors.
	\begin{theorem}\label{pst_distance2}
		Let $G$ be a distance-regular graph of diameter $2$. Then $G$ exhibits perfect state transfer if and only if $G$ is isomorphic to the cycle $C_4$ or complete tripartite graph $K_{2,2,2}$.
	\end{theorem}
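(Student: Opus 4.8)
The plan is to invoke Theorem~\ref{pst_distance}, which reduces the whole problem to two checks: that $G$ is antipodal with fibres of size $2$, and that the distinct discriminant eigenvalues satisfy the alternating Chebyshev condition (ii). Since a distance-regular graph of diameter $2$ is precisely a strongly regular graph, I would first translate Condition~\ref{pst_distance_i} into structural terms. Being antipodal with fibres of size $2$ means the distance-$2$ graph $G_2$ is a perfect matching, so every vertex has exactly one vertex at distance $2$. For an $\srg$ with parameters $(n,k,a,c)$ the number of vertices at distance $2$ from a fixed vertex is $n-1-k$, so this forces $k=n-2$; equivalently, the complement of $G$ is a perfect matching. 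Hence $n$ is even, say $n=2m$, and $G$ is the complement of a perfect matching, i.e. the complete multipartite graph $K_{2,\ldots,2}$ with $m$ parts of size $2$ (the cocktail party graph). This already isolates the only possible candidates and reduces the problem to deciding which of these graphs actually admit perfect state transfer.

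Next I would compute the spectrum. Writing $A=J-I-M$, where $M$ is the adjacency matrix of the matching forming the complement, one reads off the adjacency eigenvalues $2m-2$ (multiplicity $1$), $0$ (multiplicity $m$), and $-2$ (multiplicity $m-1$): on $\mathbf{j}$ the value is $2m-2$, while on $\mathbf{j}^\perp$ one has $A=-I-M$ with $M$ contributing $\pm1$. Since the graph is $(2m-2)$-regular, Lemma~\ref{reg} gives $P=\frac{1}{2m-2}A$, so the three distinct discriminant eigenvalues are $\mu_0=1$, $\mu_1=0$, and $\mu_2=-\frac{1}{m-1}$, ordered $\mu_0>\mu_1>\mu_2$.

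The step that cuts the candidates down is the periodicity obstruction. By Lemma~\ref{period_ass} perfect state transfer forces periodicity, and by Theorem~\ref{ls} the rational eigenvalue $\mu_2=-\frac{1}{m-1}$ must lie in $\left\{\pm1,\pm\frac12,0\right\}$. Being negative and nonzero, this leaves only $-\frac{1}{m-1}=-1$ (so $m=2$) or $-\frac{1}{m-1}=-\frac12$ (so $m=3$), giving $K_{2,2}\cong C_4$ and $K_{2,2,2}$, respectively. Finally I would confirm that perfect state transfer genuinely occurs in both surviving cases by exhibiting a time $\tau$ satisfying Condition~(ii) of Theorem~\ref{pst_distance}. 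For $C_4$ the discriminant eigenvalues are $1,0,-1$, and at $\tau=2$ one has $T_2(1)=1$, $T_2(0)=-1$, $T_2(-1)=1$, matching the even/odd pattern (and agreeing with Lemma~\ref{pst_cycle}). For $K_{2,2,2}$ the eigenvalues are $1,0,-\frac12=\cos\frac{2\pi}{3}$; choosing $\tau=6$ gives $T_6(1)=1$, $T_6(0)=\cos(3\pi)=-1$, and $T_6(-\frac12)=\cos(4\pi)=1$, again matching. The main obstacle is the structural classification in the first paragraph—recognizing that Condition~\ref{pst_distance_i} pins the graph down to a cocktail party graph; once that is in hand, the spectral computation and the periodicity filter are routine.
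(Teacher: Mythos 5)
Your proposal is correct and follows the same overall skeleton as the paper: reduce via Theorem~\ref{pst_distance} to antipodal strongly regular graphs with fibres of size $2$, identify these as cocktail party graphs, cut the candidates down with the periodicity obstruction (Lemma~\ref{period_ass} plus Theorem~\ref{ls}), and verify $C_4$ and $K_{2,2,2}$ by the Chebyshev condition at $\tau=2$ and $\tau=6$. The one place you genuinely diverge is the structural classification: the paper first uses only antipodality to prove $c=k$ (via a three-vertex distance argument), then feeds $c=k$ into the parameter identity $(n-k-1)c=k(k-a-1)$ to get parameters $(n,k,2k-n,k)$ and cites an external characterization of such graphs as complete multipartite, and only afterwards imposes that the fibres have size $2$. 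You instead use the fibre-size condition immediately: one vertex at distance $2$ from each vertex forces $n-1-k=1$, so the complement is a perfect matching and $G$ is a cocktail party graph. Your route is shorter, avoids the citation, and computes the spectrum from $A=J-I-M$ directly rather than quoting the eigenvalues of complete multipartite graphs; the paper's route, at the cost of an extra step, yields the slightly stronger intermediate fact that antipodality alone already forces $G$ to be complete multipartite, which is what lets the authors remark afterwards that the theorem recovers the full characterization for regular complete multipartite graphs.
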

	\begin{proof}
		It is well known that a graph is distance-regular with diameter $2$ if and only if it is strongly-regular. Let $G$ be a strongly regular graph with parameters $(n,k,a,c)$. If $G$ exhibits perfect state transfer, then $G$ is antipodal. We claim that $c=k$.
		
		Suppose $c\neq k$. Let $u$ and $v$ be two non-adjacent vertices of $G$. Then there exists $x\in V(G)$ such that $x$ is adjacent to $u$, but $x$ is not adjacent to $v$. Thus $\dist_G(u,v)=2$ and $\dist_G(x,v)=2$. Since $G$ is antipodal, it must also be that $\dist_G(u,x)=2$, which contradicts that $x$ is adjacent to $u$. Thus, we have $c=k$.
		
		Note that the parameters $(n,k,a,c)$ of a strongly regular graph satisfy $(n-k-1)c=k(k-a-1)$. Substituting $c=k$ into this equation, we find $a=2k-n$. Therefore $G$ is a strongly-regular graph with parameters $(n,k,2k-n,k)$, which characterizes $G$ as a complete $(n-k)$-partite graph (for instance, see~\cite[Proposition~10]{greaves}). This graph is antipodal with fibres of size $n-k$. 
		
		Thus $G$ exhibits perfect state transfer only if $n-k=2$. The distinct discriminant eigenvalues of such strongly-regular graphs are $1,0$ and $-\frac{2}{n-2}$. By Lemma~\ref{period_ass} and Theorem~\ref{ls}, it follows that  $ -\frac{2}{n-2}\in\{\pm 1,\pm \frac{1}{2},0\}$, which implies $n\in\{4,6\}$. For $n=4$, the cycle $C_4$ exhibits perfect state transfer by Lemma~\ref{pst_cycle}. For $n=6$, the distinct discriminant eigenvalues are $1,-\frac{1}{2},0$. Now, $T_6(1)=1$, $T_6(0)=-1$ and $T_6(-\frac{1}{2})=1$. Thus by theorem~\ref{pst_distance}, $K_{2,2,2}$ exhibits perfect state transfer at time $6$.
	\end{proof}
	The well-known Petersen graph is a distance-regular graph of diameter $2$. Therefore by the previous result, the Petersen graph does not exhibit perfect state transfer. The previous theorem also provides a complete characterization of perfect state transfer on strongly regular graphs and regular complete multipartite graphs. In their main result (see \cite[Theorem 1.2]{kubota1}), Kubota and Segawa completely determined which regular complete multipartite graphs exhibit perfect state transfer. However, our proof of the characterization is significantly simpler than the one presented in \cite{kubota1}.
	
	We now consider distance-regular graphs of diameter $3$. Let $G$ be a graph, and suppose $\pi$ is a partition of $V(G)$ into cells satisfying the following two conditions:
	\begin{enumerate}
		\item[(i)] each cell is an independent set, and 
		\item[(ii)] the subgraph induced by two distinct cells is either empty or $1$-regular.
	\end{enumerate}
	Let $G/\pi$ be a graph whose vertex set is $\pi$ and two vertices $X$ and $Y$ are adjacent if  the subgraph induced by $X\cup Y$ is $1$-regular. In this case, we say that $G$ is a \emph{covering} of $G/\pi$. Note that if $G/\pi$ is connected, then all cells in $\pi$ have the same size.  The common size is referred to as \emph{index} of the covering, and when this holds, we say that $G$ is an \emph{$r$-fold covering} of $G/\pi$. For example, the cycle $C_6$ is a $2$-fold covering of the complete graph $K_3$. We refer to \cite{cover} for more details about the covers of a graph.
	
	The intersection array (see \cite[Lemma~3.1]{cover}) of a antipodal distance-regular $r$-fold covering of $K_n$ is
	$$\{n-1,(r-1)c,1;1,c,n-1\},$$
	where $c$ is the number of common neighbours of two non-adjacent vertices of two different fibres. Therefore the set of distinct eigenvalues (see \cite[Equation (1)]{cover}) of the adjacency matrix of such graphs is 
	\begin{equation}\label{evc_cover}
		\left\{n-1,~ \frac{\delta+\sqrt{\Delta}}{2},~-1~, \frac{\delta-\sqrt{\Delta}}{2}\right\},
	\end{equation}
	where $\delta=n-rc-2$ and $\Delta=\delta^2+4(n-1)$. Note that these graphs have the valency $n-1$.
	\begin{theorem}\label{pst_distance3}
		Let $G$ be a distance-regular graph of diameter $3$. Then $G$ exhibits perfect state transfer if and only if $G$ is isomorphic to the cycle $C_6$.
	\end{theorem}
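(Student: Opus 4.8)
The plan is to reduce the whole problem to antipodal covers of complete graphs, where the eigenvalues are explicitly known from \eqref{evc_cover}. The backward direction is immediate: the cycle $C_6$ is a distance-regular graph of diameter $3$, and by Lemma~\ref{pst_cycle} it exhibits perfect state transfer between antipodal vertices at time $3$. So the content is entirely in the forward direction.

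For the forward direction, I would first invoke Theorem~\ref{pst_distance}: if $G$ exhibits perfect state transfer, then $G$ is antipodal with fibres of size $2$. An antipodal distance-regular graph of diameter $3$ is an antipodal $r$-fold covering of a complete graph $K_n$, and fibres of size $2$ force $r=2$. Hence $G$ has intersection array $\{n-1,c,1;1,c,n-1\}$, valency $n-1$, and the four adjacency eigenvalues listed in \eqref{evc_cover}, with $\delta=n-2c-2$ and $\Delta=\delta^2+4(n-1)$.

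The next step is to order these eigenvalues so as to identify the parity pattern required by Theorem~\ref{pst_distance}. Since the valency $n-1$ is the Perron eigenvalue it is strictly largest; the two remaining irrational eigenvalues are the roots of $x^2-\delta x-(n-1)$, whose product $-(n-1)$ is negative, so $\tfrac{\delta+\sqrt{\Delta}}{2}>0>\tfrac{\delta-\sqrt{\Delta}}{2}$. A short computation using $\delta=n-2c-2$ and $c\geq 1$ shows $(\delta+2)^2<\Delta$, whence $\tfrac{\delta-\sqrt{\Delta}}{2}<-1$, giving the ordering $n-1>\tfrac{\delta+\sqrt{\Delta}}{2}>-1>\tfrac{\delta-\sqrt{\Delta}}{2}$. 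Dividing by the valency yields the discriminant eigenvalues $\mu_0=1$, $\mu_1=\tfrac{\delta+\sqrt{\Delta}}{2(n-1)}$, $\mu_2=-\tfrac{1}{n-1}$, and $\mu_3=\tfrac{\delta-\sqrt{\Delta}}{2(n-1)}$, with $\mu_2$ the unique rational one.

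The decisive step is then short: perfect state transfer requires periodicity by Lemma~\ref{period_ass}, so Theorem~\ref{ls} forces every rational discriminant eigenvalue into $\{\pm 1,\pm\tfrac12,0\}$. Since $\mu_2=-\tfrac{1}{n-1}$ lies in $(-1,0)$, this gives $-\tfrac{1}{n-1}\in\{-1,-\tfrac12\}$, hence $n\in\{2,3\}$. The case $n=2$ is degenerate (valency $1$, so not a diameter-$3$ graph), and for $n=3$ the inequality $a_1=b_0-b_1-c_1\geq 0$ from Lemma~\ref{intarray} forces $c=1$; the resulting intersection array $\{2,1,1;1,1,2\}$ is realized by a connected $2$-regular graph on six vertices, which must be $C_6$. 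Lemma~\ref{pst_cycle} then confirms that $C_6$ indeed exhibits perfect state transfer, closing the argument. I expect the main obstacle to be the two structural reductions rather than the arithmetic: first, justifying that a diameter-$3$ antipodal distance-regular graph is precisely an antipodal cover of $K_n$ with the stated eigenvalues, and second, correctly ordering the eigenvalues (in particular verifying $\tfrac{\delta-\sqrt{\Delta}}{2}<-1$) so that the rational eigenvalue is pinned to $\mu_2$ and the degenerate $n=2$ case is cleanly excluded.
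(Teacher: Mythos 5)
Your proposal is correct and follows essentially the same route as the paper: reduce via Theorem~\ref{pst_distance} to antipodal $2$-fold covers of $K_n$, use the eigenvalues in \eqref{evc_cover} together with Lemma~\ref{period_ass} and Theorem~\ref{ls} to force $-\tfrac{1}{n-1}\in\{\pm 1,\pm\tfrac12,0\}$ and hence $n\in\{2,3\}$, and identify the surviving case as $C_6$. The extra eigenvalue-ordering step and the explicit derivation of $c=1$ are harmless elaborations the paper omits.
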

	\begin{proof}
		It is well known (for instance, see \cite[Section~4.2B]{brouwer_drt}) that an antipodal distance-regular graph of diameter $3$ is an $r$-fold covering of the complete graph $K_n$ for some positive integers $r$ and $n$. This implies that the fibres of $G$ are of size $r$. If $G$ exhibits perfect state transfer, then by Theorem~\ref{pst_distance}, we have $r=2$. Now from  \eqref{evc_cover}, it follows that $-\frac{1}{n-1}\in\spec_P(G)$. Then, by Lemma~\ref{period_ass} and Theorem~\ref{ls}, we must have $-\frac{1}{n-1}\in\{\pm1,\pm\frac{1}{2},0\}$, which implies $n=2$ or $n=3$.
		
		However, a $2$-fold covering of $K_2$ is the disjoint union $K_2\cup K_2$, which is not distance-regular. For $n=3$, the distance-regular $2$-fold covering of $K_3$ is isomorphic to the cycle $C_6$. Hence by Lemma~\ref{pst_cycle}, the result follows. 
	\end{proof}
	We now investigate perfect state transfer on integral distance-regular graphs. As previously noted, periodicity is a necessary condition for perfect state transfer on distance-regular graphs. In \cite{bhakta1}, we presented a characterization of integral periodic regular graphs.
	\begin{theorem}[{\cite[Theorem~5.9]{bhakta1}}]\label{int_periodic}
		A graph $G$ is regular, integral and periodic if and only if it is either the cycle $C_6$ or the complete bipartite graph $K_{k,k}$ or the complete tripartite graph $K_{k,k,k}$ or $\spec_P(G)=\{1, \pm \frac{1}{2},0\}$ or $\spec_P(G)=\{\pm 1,\pm \frac{1}{2},0\}$.		
	\end{theorem}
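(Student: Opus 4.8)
The plan is to use regularity and integrality to trap the discriminant spectrum inside a fixed five-element set, and then read off the graph from the finitely many admissible spectra. Since $G$ is $k$-regular, Lemma~\ref{reg} gives $P=\frac1k A$, so every discriminant eigenvalue has the form $\lambda/k$ with $\lambda\in\spec_A(G)$; integrality then forces $\spec_P(G)\subseteq\Ql$, i.e. $\spec_P(G)=\spec_P^{\Ql}(G)$. Periodicity lets me apply the rational part of Theorem~\ref{ls}, yielding $\spec_P(G)\subseteq\{\pm1,\pm\frac12,0\}$, so the adjacency eigenvalues lie in $\{\pm k,\pm\frac k2,0\}$, there are at most five distinct ones, and by Lemma~\ref{evc_P} the value $\mu_0=1$ always occurs. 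It suffices to classify connected $G$ (a regular graph splits into components of equal valency, each periodic because the time-evolution matrix block-diagonalizes over components, and one applies the connected result componentwise), so I may use that the number of distinct eigenvalues equals $\operatorname{diam}(G)+1$.

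Then I would run a case analysis on the subset $S:=\spec_P(G)\subseteq\{1,\tfrac12,0,-\tfrac12,-1\}$ containing $1$, using three standard facts: a connected $k$-regular graph has $-k$ as an eigenvalue iff it is bipartite; a bipartite graph has a symmetric spectrum; and a connected regular graph with exactly three distinct eigenvalues is strongly regular. Symmetry immediately kills every $S$ in which $-1\in S$ but exactly one of $\pm\frac12$ appears. Two distinct eigenvalues give the complete graphs $K_n$, and the constraint that $-1/(n-1)$ lie in $S$ singles out $K_2=K_{1,1}$ and $K_3=K_{1,1,1}$. For three eigenvalues I use the strong-regularity classification: the feasible non-bipartite cases are complete multipartite graphs, and matching the second and least eigenvalues to $0,-\frac12$ (resp. to $-1$ in the bipartite case) identifies $K_{k,k,k}$ with spectrum $\{1,0,-\frac12\}$ and $K_{k,k}$ with spectrum $\{1,0,-1\}$, while the remaining three-element candidates fail the integrality and feasibility conditions for a genuine strongly regular graph.

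The four- and five-eigenvalue cases furnish the two spectra listed verbatim, $\{1,\pm\frac12,0\}$ and $\{\pm1,\pm\frac12,0\}$, together with the one remaining bipartite possibility $\spec_P(G)=\{\pm1,\pm\frac12\}$. I expect this last possibility to be the main obstacle, and I would handle it by writing $A=\bigl(\begin{smallmatrix}0&B\\B^{*}&0\end{smallmatrix}\bigr)$, so that on the part $X$ the matrix $BB^{*}$ has eigenvalue $k^2$ once and $\frac{k^2}{4}$ with multiplicity $|X|-1$; hence $BB^{*}=\frac{k^2}{4}I+\frac{3k^2}{4|X|}J$, which means any two vertices of $X$ share the same constant number of common neighbours. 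Comparing this with the diagonal entry $k$ of $BB^{*}$ gives $k-\frac{k^2}{4}=\frac{3k^2}{4|X|}$; integrality makes $k$ even, and this equation leaves only $k=2$ with $|X|=3$, i.e. $G\cong C_6$.

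Finally, for the converse I would verify directly that $C_6$, $K_{k,k}$ and $K_{k,k,k}$ are regular and integral with $\spec_P\subseteq\{\pm1,\pm\frac12,0\}\subseteq\Re$, so Lemma~\ref{m1} gives periodicity; and for a regular graph whose discriminant spectrum is $\{1,\pm\frac12,0\}$ or $\{\pm1,\pm\frac12,0\}$, the appearance of $\frac k2$ as an eigenvalue of the integer matrix $A=kP$ forces $k$ even, since a rational algebraic integer is an integer, giving integrality, while $\spec_P\subseteq\Re$ again yields periodicity through Lemma~\ref{m1}.
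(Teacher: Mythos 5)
The paper does not actually prove this statement: it is imported verbatim from \cite[Theorem~5.9]{bhakta1}, so there is no in-paper argument to compare against. Judged on its own merits, your proof is essentially correct and uses exactly the tools this paper makes available: Lemma~\ref{reg} to write $P=\frac{1}{k}A$, integrality to force $\spec_P(G)=\spec_P^{\Ql}(G)$, and the rational branch of Theorem~\ref{ls} to trap $\spec_P(G)$ inside $\{\pm1,\pm\frac{1}{2},0\}$. The ensuing case analysis over the subsets containing $1$ is complete: the two- and three-eigenvalue cases correctly produce $K_2$, $K_3$, $K_{k,k}$ and $K_{k,k,k}$ from the standard facts about complete and strongly regular graphs (your dismissal of $\{1,\frac{1}{2},0\}$ and $\{1,\frac{1}{2},-\frac{1}{2}\}$ is glossed but routine: the first forces least adjacency eigenvalue $0$, the second forces $c=k-\frac{k^2}{4}$ with $k\in\{2,4\}$, both infeasible); the symmetric four- and five-element spectra are exactly the two exceptional spectra in the statement; and the one genuinely delicate case $\spec_P(G)=\{\pm1,\pm\frac{1}{2}\}$ is handled correctly, since $BB^{*}=\frac{k^2}{4}I+\frac{3k^2}{4|X|}J$ together with the diagonal comparison $k-\frac{k^2}{4}=\frac{3k^2}{4|X|}$ and the parity of $k$ really does pin down $k=2$, $|X|=3$, hence $C_6$. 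The converse direction is also fine.

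Two blemishes are worth fixing. First, the number of distinct eigenvalues of a connected graph is at least $\operatorname{diam}(G)+1$, not equal to it; equality is special to distance-regular graphs. You never actually use equality, so nothing breaks, but the sentence should be corrected or deleted. Second, the reduction to connected graphs does not literally work: a disjoint union such as $C_6\cup C_6$ is regular, integral and periodic, is none of the listed graphs, and has $\spec_P=\{\pm1,\pm\frac{1}{2}\}$, which is not one of the two listed spectra. The theorem must therefore be read, as in \cite{bhakta1}, as a statement about connected graphs; your argument proves that statement, but the componentwise patch you sketch cannot recover the literal disconnected case, because the class described in the conclusion is not closed under disjoint union.
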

	We apply Theorem~\ref{int_periodic} to show that, up to isomorphism, only four graphs in the class of integral distance-regular graphs exhibit perfect state transfer.
	\begin{theorem}\label{pst_int_distance}
		Let $G$ be an integral distance-regular graph. Then $G$ exhibits perfect state transfer if and only if $G$ is isomorphic to one of $K_2$, $C_4$, $C_6$ and the complete tripartite graph $K_{2,2,2}$.
	\end{theorem}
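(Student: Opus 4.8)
The plan is to reduce the problem to the classification of integral periodic regular graphs in Theorem~\ref{int_periodic} and then to test each resulting form for perfect state transfer using the distance-regular criteria already established. The starting observation is that every distance-regular graph is regular, and by Lemma~\ref{period_ass} a graph belonging to an association scheme that exhibits perfect state transfer must be periodic. Hence an integral distance-regular graph $G$ admitting perfect state transfer is simultaneously regular, integral and periodic, so Theorem~\ref{int_periodic} forces $G$ to be one of: the cycle $C_6$, a complete bipartite graph $K_{k,k}$, a complete tripartite graph $K_{k,k,k}$, or a graph with discriminant spectrum $\spec_P(G)=\{1,\pm\frac12,0\}$ or $\spec_P(G)=\{\pm1,\pm\frac12,0\}$. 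It then suffices to decide, within each of these five families, exactly which members are distance-regular and exhibit perfect state transfer.

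First I would dispose of the three explicit families. The cycle $C_6$ exhibits perfect state transfer by Lemma~\ref{pst_cycle}. For $K_{k,k}$ and $K_{k,k,k}$ the degenerate cases $k=1$ give the complete graphs $K_2$ and $K_3$, so Lemma~\ref{pst_complete} retains only $K_2$. For $k\ge 2$ both $K_{k,k}$ and $K_{k,k,k}$ are strongly regular, hence distance-regular of diameter $2$, and Theorem~\ref{pst_distance2} shows that the only diameter-$2$ graphs with perfect state transfer are $C_4\cong K_{2,2}$ and $K_{2,2,2}$. Since $K_{k,k}$ is bipartite and $K_{k,k,k}$ is tripartite, this selects exactly $C_4$ from the first family and $K_{2,2,2}$ from the second. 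Together with $C_6$ and $K_2$, this produces the candidate list $K_2$, $C_4$, $C_6$, $K_{2,2,2}$.

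The main work lies in ruling out the two spectrum families. A distance-regular graph of diameter $d$ has exactly $d+1$ distinct discriminant eigenvalues, so $\spec_P(G)=\{1,\frac12,0,-\frac12\}$ corresponds to diameter $3$ and $\spec_P(G)=\{1,\frac12,0,-\frac12,-1\}$ to diameter $4$. In both cases the eigenvalue $-\frac12$ occupies the odd position $j=3$ in the decreasing order $\mu_0>\cdots>\mu_d$. By Theorem~\ref{pst_distance}, perfect state transfer at time $\tau$ would force $T_\tau(\mu_3)=T_\tau(-\frac12)=-1$. But $T_\tau(-\frac12)=\cos\frac{2\pi\tau}{3}$, which equals $1$ when $3\mid\tau$ and $-\frac12$ otherwise, and is therefore never $-1$. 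Hence no graph in either spectrum family exhibits perfect state transfer; the diameter-$3$ case is moreover consistent with Theorem~\ref{pst_distance3}, which singles out $C_6$, whose spectrum $\{1,\frac12,-\frac12,-1\}$ differs from $\{1,\frac12,0,-\frac12\}$. This leaves precisely $K_2$, $C_4$, $C_6$ and $K_{2,2,2}$.

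For the converse I would verify that these four graphs are indeed integral distance-regular graphs exhibiting perfect state transfer: $K_2$ by Lemma~\ref{pst_complete}, the even cycles $C_4$ and $C_6$ by Lemma~\ref{pst_cycle}, and $K_{2,2,2}$ by Theorem~\ref{pst_distance2}; their adjacency spectra $\{\pm1\}$, $\{2,0,-2\}$, $\{2,\pm1,-2\}$ and $\{4,0,-2\}$ are all integral. The one genuinely new point, not already covered by the diameter-specific theorems, is the diameter-$4$ spectrum family, and the crux there is the elementary Chebyshev identity $T_\tau(-\frac12)\ne -1$ for every $\tau$; I expect this to be the real obstacle, since everything else amounts to bookkeeping over the list supplied by Theorem~\ref{int_periodic}.
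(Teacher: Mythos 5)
Your proposal is correct and follows essentially the same route as the paper: reduce via Lemma~\ref{period_ass} and Theorem~\ref{int_periodic}, dispose of $C_6$, $K_{k,k}$, $K_{k,k,k}$ using Lemmas~\ref{pst_cycle} and~\ref{pst_complete} and Theorem~\ref{pst_distance2}, and rule out the two residual spectrum families with the Chebyshev condition of Theorem~\ref{pst_distance}. The only difference is cosmetic and occurs in that last step: the paper computes the period to be $12$, forces $\tau=6$, and checks $T_6(\tfrac{1}{2})=1\neq -1$ at the odd position $j=1$, whereas you observe directly that $T_\tau(-\tfrac{1}{2})\in\{1,-\tfrac{1}{2}\}$ can never equal $-1$ at the odd position $j=3$, which avoids the period computation entirely.
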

	\begin{proof}
		By Theorem~\ref{pst_distance2} and Lemmas~\ref{pst_cycle} and~\ref{pst_complete}, the graphs $K_2$, $C_4$, $C_6$, and $K_{2,2,2}$ exhibit perfect state transfer. Suppose $G$ is an integral distance-regular graph exhibiting perfect state transfer at time $\tau$. Then Lemma~\ref{period_ass} and Theorem~\ref{int_periodic} imply that $G\in\{C_6,K_{k,k},K_{k,k,k}\}$ for some positive integer $k$ or $\spec_P(G) = \{1, \pm \frac{1}{2}, 0\}$ or $\spec_P(G) = \{\pm 1, \pm \frac{1}{2}, 0\}$.  For $k = 1$, we have $K_{k,k} = K_2$ and $K_{k,k,k} = K_3$. However, $K_3$ does not exhibit perfect state transfer. For $k \geq 2$, the proof of Theorem~\ref{pst_distance2} shows that the graphs $K_{k,k}$ and $K_{k,k,k}$ exhibit perfect state transfer only for $k=2$, that is, the graph is either $C_4$ or $K_{2,2,2}$.
		
		Now suppose $\spec_P(G)=\{1, \pm \frac{1}{2},0\}$. By Theorem~\ref{evu_grover} and Lemma~\ref{periodic}, the period of $G$ is $12$. Then by Lemma~\ref{period_ass}, $\tau=6$. Observe that $1>\frac{1}{2}>0>-\frac{1}{2}$ and $T_6(\frac{1}{2})=1$. Therefore by Theorem~\ref{pst_distance}, $G$ does not exhibit perfect state transfer. Similarly, $G$ does not exhibit perfect state transfer for the case $\spec_P(G)=\{\pm 1,\pm \frac{1}{2},0\}$ as well.
	\end{proof}
	\section*{Acknowledgements}
	The first author gratefully acknowledges the support of the Prime Minister's Research Fellowship (PMRF), Government of India (PMRF-ID: 1903298).

\end{document}